\documentclass[12pt]{article}
\usepackage{geometry}                
\geometry{letterpaper}                   
\usepackage{graphicx}
\usepackage{amssymb}
\usepackage{amsmath}
\usepackage{mathtools}
\usepackage{amsthm}
\usepackage{comment}
\usepackage{tabu}
\usepackage{hyperref}
\DeclareGraphicsRule{.tif}{png}{.png}{`convert #1 `dirname #1`/`basename #1 .tif`.png}

\newcommand*{\lhs}{\hspace{1.5cm}&\hspace{-1.5cm}}

\newcommand*{\NN}{\mathbb{N}}
\newcommand*{\ZZ}{\mathbb{Z}}

\DeclareMathOperator*{\PP}{\mathbb{P}}
\DeclareMathOperator*{\EE}{\mathbb{E}}

\newcommand*{\calE}{\mathcal{E}}

\newcommand*{\calO}{\mathcal{O}}

\newcommand*{\calW}{\mathcal{W}}

\newcommand*{\A}{\mathtt{A}}
\newcommand*{\B}{\mathtt{B}}

\newcommand*{\amap}{\mathtt{a}}
\newcommand*{\bmap}{\mathtt{b}}

\newcommand*{\1}[1]{\mathbf{1}_{\{#1\}}}

\newcommand*{\join}{\mathsf{J}}
\newcommand*{\tup}{}

\newcommand*{\shent}{\mathrm{H}} 
\newcommand*{\h}{\mathrm{h}} 
\newcommand*{\f}{f} 

\newcommand*{\SBM}{\mathtt{SBM}} 

\let\mb\mathbf

\newcommand*{\st}{\,:\,}

\newtheorem{lemma}{Lemma}[section]
\newtheorem{cor}[lemma]{Corollary}
\newtheorem{prop}[lemma]{Proposition}

\newtheorem{mainthm}{Theorem}
\newtheorem{mainprop}{Proposition}
\newtheorem{claim}{Claim}

\theoremstyle{definition}

\DeclareMathOperator{\Prob}{Prob}

\DeclareMathOperator{\Unif}{Unif}
\newcommand*{\limsupinf}{\limsup} 
\DeclareMathOperator{\Hom}{Hom}
\DeclareMathOperator{\Sym}{Sym}

\DeclareMathOperator{\TV}{TV}

\DeclarePairedDelimiter{\abs}{\lvert}{\rvert}
\DeclarePairedDelimiter{\floor}{\lfloor}{\rfloor}
\DeclarePairedDelimiter{\norm}{\|}{\|}
\DeclarePairedDelimiterX{\inprod}[2]{\langle}{\rangle}{#1,\ #2}

\newcommand*{\ball}[2]{\mathrm{B}(#1,#2)} 

\title{The relative $f$-invariant and non-uniform random sofic approximations}
\author{Christopher Shriver}

\begin{document}
\maketitle

\begin{abstract}
The $f$-invariant is an isomorphism invariant of free-group measure-preserving actions introduced by Lewis Bowen in \cite{bowen2010a}, where it was used to show that two finite-entropy Bernoulli shifts over a finitely generated free group can be isomorphic only if their base measures have the same Shannon entropy.
In \cite{bowen2010} Bowen showed that the $f$-invariant is a variant of sofic entropy; in particular it is the exponential growth rate of the expected number of good models over a uniform random homomorphism.

In this paper we present an analogous formula for the relative $f$-invariant and use it to prove a formula for the exponential growth rate of the expected number of good models over a random sofic approximation which is a type of stochastic block model.
\end{abstract}

\tableofcontents

\newpage

\section{Introduction, Main Results}

Let $G = \langle S \rangle$ denote the the rank-$r$ free group with generating set $S = \{s_1, \ldots, s_r\}$ and identity $e$, and let $(X,\mu,T)$ be a measure-preserving $G$-system, i.e. $T$ is a homomorphism from $G$ to the automorphism group of the standard probability space $(X,\mu)$. We will not need to make explicit use of the $\sigma$-algebra on $X$, so we leave it unnamed.

An \emph{observable} on $X$ is a measurable map with domain $X$. In this paper the codomain will be a finite set endowed with the discrete sigma algebra; in this case we call the map a \emph{finite observable} and the codomain an \emph{alphabet}.

Any observable $\alpha \colon X \to \A$ induces a map $\alpha^G \colon X \to \A^G$ by setting
	\[ (\alpha^G(x))_g = \alpha(T_g x) \quad \text{for all } g \in G . \]
The $\A$-coloring $\alpha^G(x)$ of $G$ is sometimes called the \emph{itinerary} of $x$, since it records the observations that will be made over the entire orbit of $x$ under the action of $G$. We also similarly define the map $\alpha^H \colon X \to \A^H$ for any subset $H$ of $G$. We abbreviate $\alpha^n \coloneqq \alpha^{\ball{e}{n}}$, where $\ball{e}{n}$ is the closed ball of radius $n$ centered at the identity in $G$, which is endowed with the word-length metric. If $\beta \colon X \to \B$ is a second finite observable, we denote by $\alpha\beta \colon X \to \A \times \B$ the map $\alpha\beta(x) = (\alpha(x), \beta(x))$.
\bigbreak

The (Shannon) entropy of a finite observable $\alpha \colon X \to \A$ is defined by
	\[ \shent_\mu (\alpha) = - \sum_{a \in \A} \alpha_*\mu (a) \log \alpha_* \mu(a) , \]
where $\alpha_* \mu \in \Prob(\A)$ is the pushforward measure; we take the convention $0 \log 0 = 0$. The entropy of $\alpha$ can be interpreted as the expected amount of information revealed by observing $\alpha$, assuming its distribution $\alpha_* \mu$ is known.

An early application of Shannon's entropy to ergodic theory was its use by Kolmogorov and Sinai to show that there exist nonisomorphic Bernoulli shifts over $\ZZ$. A Bernoulli shift over $\ZZ$ is a system of the form $(\A^\ZZ, \mu^\ZZ, S)$ for some alphabet $\A$ and $\mu \in \Prob(\A)$; $S$ is the shift action of $\ZZ$. They did this by defining an \emph{entropy rate} for $\ZZ$-systems, which can be interpreted as the average information per unit time revealed by observing the system. For a Bernoulli shift $(\A^\ZZ, \mu^\ZZ, S)$, the entropy rate is simply the ``base entropy'' $\shent_\mu(\alpha)$, where $\alpha \colon \A^n \to \A$ is the ``time zero'' observable.

Isomorphism invariance of the KS entropy rate is typically proven using the fact that entropy rate is nonincreasing under factor maps (which are surjective homomorphisms of measure-preserving systems). This fact can be interpreted as stating that a system cannot simulate another system that is ``more random.''

The entropy rate was soon generalized to systems acted on by an arbitrary amenable group (such as $\ZZ^d$). Extending beyond amenable groups proved more difficult, and in fact it was found to be impossible for such an extension to preserve all desirable properties of the KS entropy rate. In particular, an entropy rate for nonamenable group which assigns Bernoulli shifts their base entropy cannot be nonincreasing under factor maps \cite[Appendix C]{ornstein1987}.

\bigbreak

The first invariant to distinguish between Bernoulli shifts over free groups is Lewis Bowen's $\f$-invariant. Following \cite{bowen2010}, this can be defined by
\begin{align*}
	F_\mu (T, \alpha) &= (1-2r) \shent_\mu (\alpha) + \sum_{i=1}^r \shent_\mu (\alpha^{\{e,s_i\}}) \\
	\f_\mu (T, \alpha) &= \inf_n F_\mu (T, \alpha^n) = \lim_{n \to \infty} F_\mu (T, \alpha^n).
\end{align*}
The main theorem of \cite{bowen2010a} is that $\f_\mu(T, \alpha)$ depends on the observable $\alpha$ only through the $\sigma$-algebra it generates. In particular, the common value of $\f_\mu (T, \alpha)$ among all $\alpha$ which generate the Borel $\sigma$-algebra on $X$ (assuming such $\alpha$ exist) is a measure-conjugacy invariant of the system $(X, \mu, T)$. In the same paper, he showed that the $\f$-invariant of a Bernoulli shift is the Shannon entropy of the base measure; in particular, Bernoulli shifts with different base entropies are nonisomorphic.

\bigbreak

In \cite{bowen2010}, Bowen gave an alternate formula for the $f$-invariant, which we now introduce.

For any homomorphism $\sigma \colon G \to \Sym(n)$ we have a $G$-system $([n], \Unif(n), \sigma)$, and we can consider a labeling $\mb{x} \in \A^{n}$ as an observable on this system. We denote the law of its itinerary by $P^\sigma_{\mb{x}} = \mb{x}^G_*\Unif(n)$ and call this the \emph{empirical distribution} of $\mb{x}$. We say that $\mb{x}$ is a good model for $\alpha$ over $\sigma$ if it is difficult to distinguish the $G$-systems $(X,\mu, T)$ and $([n], \Unif(n), \sigma)$ via their respective observables $\alpha$ and $\mb{x}$. To make this precise, we denote
	\[ \Omega(\sigma, \calO) \coloneqq \{ \mb{x} \in \A^n \st P^\sigma_{\mb{x}} \in \calO \}, \]
which is a set of good models for $\alpha$ over $\sigma$ if $\calO$ is a weak$^*$-open neighborhood of $\alpha^G_*\mu \in \Prob(\A^G)$; the particular set $\calO$ quantifies how good the models are. The alphabet $\A$ is given the discrete topology and $\A^G$ the product topology, so ``weak$^*$-close'' means marginals on some finite sets are close in total variation norm.

For each $n \in \NN$, let  $\mu_n = \Unif(\Hom(G, \Sym(n)))$. Bowen showed in \cite{bowen2010} that the $f$-invariant is given by
	\[ f_\mu (T, \alpha) = \inf_{\calO \ni \alpha^G_*\mu} \limsup_{n \to \infty} \frac{1}{n} \log \EE_{\sigma \sim \mu_n} \abs{\Omega(\sigma, \calO)} . \]

To make an analogy with statistical physics, we can think of $\alpha^G_* \mu$ as a macroscopic statistical distribution of the state of a system; then the $f$-invariant is the exponential growth rate of the number of ``microstates'' that are consistent with these statistics. What we here call good models are often called microstates for this reason.

\bigbreak

If $\beta \colon X \to \B$ is a second observable, the conditional entropy is
	\[ \shent_\mu (\alpha | \beta) = \shent_\mu (\alpha \tup \beta) - \shent_\mu (\beta) . \]
This can be interpreted as the expected amount of information revealed by observing $\alpha$ if both the value of $\beta$ and the joint distribution of $\alpha$ and $\beta$ are known. By analogy we define
\begin{align*}
	F_\mu(T, \alpha | \beta) &= F_\mu (T, \alpha \tup \beta) - F_\mu (T, \beta) \\
	&= (1-2r) \shent_\mu (\alpha|\beta) + \sum_{i=1}^r \shent_\mu (\alpha^{\{e,s_i\}} \mid \beta^{\{e,s_i\}}) \\
	f_\mu (T, \alpha | \beta) &= \inf_{k_1 \in \NN} \sup_{k_2 \in \NN} F_\mu (T, \alpha^{k_1} \mid \beta^{k_2}).
\end{align*}
Both the infimum and supremum can be replaced by limits; this follows from Lemma \ref{lem:splittings} below. It follows from Corollary \ref{cor:chainrule} that we could also directly define
	\[ f_\mu (T, \alpha | \beta) = f_\mu(T, \alpha \tup \beta) - f_\mu(T, \beta) , \]
as long as $f_\mu(T, \beta) > -\infty$.

\bigbreak

A few more definitions are required to state our main theorems. If $H$ is a  finite subset of $G$, we denote by $d^H(\mu, \nu)$ the total variation distance between the marginals of $\mu$ and $\nu$ on $\A^H$. Our convention for the total variation distance between measures $\mu, \nu \in \Prob(\A)$ is
	\[ \norm{\mu - \nu}_{\TV} = \frac{1}{2} \sum_{a \in \A} \abs{\mu\{a\} - \nu\{a\}} . \]

For each $k \in \NN$ we define a pseudometric on $\Prob(\A^G)$ by
	\[ d^*_k(\mu, \nu) = \sum_{i \in [r]} d^{\ball{e}{k} \cup \ball{s_i}{k}}(\mu, \nu) . \]
Note that $\{d_k^*\}_{k \in \NN}$ together generate the weak$^*$ topology on $\Prob(\A^G)$. These generalize the almost-pseudometric\footnote{Bowen's $d_\sigma^*$ is essentially a pseudometric except that its first and second arguments come from different sets.} $d_\sigma^*$ from \cite{bowen2010}, which corresponds to the case $k=0$.
For $\calO = \{\nu \in \Prob(\A^G) \st d^*_k(\alpha^G_* \mu, \nu) < \varepsilon\}$ we write
	\[ \Omega(\sigma, \calO) \eqqcolon \Omega^*_k(\sigma, \alpha, \varepsilon) \subseteq \A^n . \]

In the present paper, instead of picking a homomorphism $\sigma$ uniformly at random we will use the following type of stochastic block model: given $\mb{y}_0 \in \B^n$, $\sigma_0 \in \Hom(G, \Sym(n))$, and $k \in \NN$, let
	\[ \SBM(\sigma_0, \mb{y}_0, k) \coloneqq \Unif(\{ \sigma \in \Hom(G, \Sym(n)) \st d^*_k(P_{\mb{y}_0}^{\sigma}, P_{\mb{y}_0}^{\sigma_0}) = 0 \}) . \]
The labeling $\mb{y}_0$ partitions the elements of $[n]$ into $\abs{\B}$ communities, and we can think of the random homomorphism $\sigma$ as a random choice of directed edges between and within the communities. Certain statistics of these random edge choices are determined by the reference homomorphism $\sigma_0$; note that for $k>0$ these statistics are more precise than those specified by a standard stochastic block model. In Section \ref{sec:weights} we define weights, which are the objects used to record the relevant statistics.


\bigbreak


We first prove our formula for the relative $f$-invariant under a Markov assumption: in this case, our stochastic block model only needs to take into account ``one-step statistics.''

\begin{mainthm}
\label{thm:main1}
	Let $\alpha \colon X \to \A$ and $\beta \colon X \to \B$ be finite observables, and for each $n$ let $\mb{y}_n \in \B^n$ and $\sigma_n \in \Hom(G, \Sym(n))$ be such that
		\[ \lim_{n \to \infty} d_0^*(P_{\mb{y}_n}^{\sigma_n}, \beta^G_*\mu) = 0 . \]
	Suppose that $\beta^G_* \mu$ is a Markov measure. With $\mu_n = \SBM(\sigma_n,\mb{y}_n, 0)$, we have
		\[ f_\mu(T,\alpha \mid \beta) = \inf_{\calO \ni (\alpha\beta)^G_* \mu} \limsupinf_{n \to \infty} \frac{1}{n} \log \EE_{\sigma \sim \mu_n} \abs*{\{ \mb{x} \in \A^n \st (\mb{x},\mb{y}_n) \in \Omega(\sigma, \calO) \}}. \]
\end{mainthm}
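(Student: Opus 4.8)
The plan is to reduce the statement to a counting identity relating the expected number of good $\alpha$-models over $\SBM(\sigma_n,\mb{y}_n,0)$ to a ratio of good-model counts for the joint observable $\alpha\beta$ and for $\beta$ alone, and then to invoke Bowen's formula from \cite{bowen2010} together with the relative definitions given above. First I would rewrite the expectation over $\sigma\sim\mu_n$: since $\mu_n$ is uniform on the set of homomorphisms $\sigma$ with $d_0^*(P_{\mb{y}_n}^{\sigma},P_{\mb{y}_n}^{\sigma_n})=0$, for any $\calO$
\[
\EE_{\sigma\sim\mu_n}\abs{\{\mb{x}\in\A^n\st(\mb{x},\mb{y}_n)\in\Omega(\sigma,\calO)\}}
= \frac{\sum_{\sigma\in\Hom(G,\Sym(n))}\1{d_0^*(P_{\mb{y}_n}^{\sigma},P_{\mb{y}_n}^{\sigma_n})=0}\,\abs{\{\mb{x}\st(\mb{x},\mb{y}_n)\in\Omega(\sigma,\calO)\}}}{\abs{\{\sigma\st d_0^*(P_{\mb{y}_n}^{\sigma},P_{\mb{y}_n}^{\sigma_n})=0\}}}.
\]
The denominator is, up to the constraint being exactly an empirical-distribution condition rather than a neighborhood, essentially $\abs{\Omega^*_0(\sigma_n^{\mathrm{triv}},\beta,0)}$-type quantity counting homomorphisms realizing the prescribed one-step $\beta$-statistics; by the weights machinery of Section \ref{sec:weights} its exponential growth rate is controlled by $F_\mu(T,\beta^{k_2})$ for suitable $k_2$, which after taking limits gives $f_\mu(T,\beta)$ (this is where the Markov hypothesis on $\beta^G_*\mu$ enters: it guarantees that the one-step statistics $P_{\mb{y}_n}^{\sigma_n}$ already determine the relevant higher-order $\beta$-data, so that $d_0^*$ suffices). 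The numerator, summed over all $\sigma$, counts pairs $(\mb{x},\sigma)$ with $\mb{x}$ a good $\alpha$-model over $\sigma$ and $\sigma$ compatible with $\mb{y}_n$; by a uniform random homomorphism computation in the style of \cite{bowen2010} its growth rate is $f_\mu(T,\alpha\beta)$.

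The key steps, in order, are: (1) express the SBM expectation as the ratio above; (2) use the relative Bowen-type estimates (the relative $f$-invariant formula, which I expect is proved just before this theorem, or which follows from the weights/local-count lemmas referenced as Lemma \ref{lem:splittings} and Corollary \ref{cor:chainrule}) to identify
\[
\inf_{\calO}\limsup_{n\to\infty}\frac1n\log\Bigl(\text{numerator}\Bigr) = f_\mu(T,\alpha\beta),
\qquad
\lim_{n\to\infty}\frac1n\log\Bigl(\text{denominator}\Bigr) = f_\mu(T,\beta);
\]
(3) subtract, using $f_\mu(T,\alpha\mid\beta)=f_\mu(T,\alpha\beta)-f_\mu(T,\beta)$ from the discussion after Corollary \ref{cor:chainrule}, which is valid since the hypothesis $\lim_n d_0^*(P_{\mb{y}_n}^{\sigma_n},\beta^G_*\mu)=0$ forces $f_\mu(T,\beta)>-\infty$ (the denominator is nonempty and grows at a finite rate); (4) check that the infimum over neighborhoods $\calO\ni(\alpha\beta)^G_*\mu$ and the $\limsup$ interact correctly with the ratio — in particular that restricting $\mb{x}$ so that $(\mb{x},\mb{y}_n)$ is a good joint model is the same, in the limit, as the joint good-model count divided by the (fixed, $\mb{y}_n$-determined) $\beta$-count.

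The main obstacle I anticipate is step (2), specifically controlling the denominator: one must show that the number of homomorphisms exactly matching the one-step empirical $\beta$-statistics of $\sigma_n$ has exponential growth rate equal to $f_\mu(T,\beta)$, not merely bounded by it. This requires (a) the Markov property of $\beta^G_*\mu$, so that the limiting measure is pinned down by its $d_0^*$-data and no information is lost by conditioning only on one-step statistics; and (b) a concentration/annealed-to-quenched comparison showing that the typical $\sigma$ drawn from the SBM behaves like the reference $\sigma_n$, so that the conditional count of good $\alpha$-models concentrates. I would handle (a) by an explicit entropy computation for Markov measures — the relative $f$-invariant of a Markov chain telescopes into one-step conditional entropies, matching the weight-counting formula — and (b) by the same second-moment or subadditivity arguments underlying Bowen's original proof, now applied fiberwise over the $\mb{y}_n$-classes. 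A secondary technical point is interchanging $\inf_\calO$ with the $\limsup$ and with the passage from neighborhoods to exact empirical constraints; this is routine given that $\{d^*_k\}$ generate the weak$^*$ topology, but must be stated carefully.
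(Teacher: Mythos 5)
Your high-level plan---rewrite the SBM expectation as a ratio and invoke the chain rule $f_\mu(T,\alpha\mid\beta)=f_\mu(T,\alpha\beta)-f_\mu(T,\beta)$---is conceptually the right shape: the paper's Proposition~\ref{prop:singleweightestimate} makes exactly this kind of identity precise by showing that for a fixed denominator-$n$ $(\A\times\B)$-weight $W_{\A\B}$ the expected count equals $Z_n(W_{\A\B})/Z_n(\pi_\B W_{\A\B})$, which is within a polynomial factor of $e^{n(F(W_{\A\B})-F(\pi_\B W_{\A\B}))}$. However, several of your intermediate claims are either incorrect as stated or point at tools the paper does not use, and together they leave a genuine gap.

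First, the claimed individual growth rates are off. With $\mb{y}_n$ fixed, the denominator counts homomorphisms (not $(\sigma,\mb{y})$ pairs), and its normalized logarithm behaves like $F_\mu(T,\beta)-\shent_\mu(\beta)$, not $f_\mu(T,\beta)$; similarly the numerator carries an extra $-\shent_\mu(\beta)$. These cancel in the ratio, so the final answer is unaffected, but you should not lean on ``numerator $=f_\mu(T,\alpha\beta)$'' and ``denominator $=f_\mu(T,\beta)$'' as freestanding facts---only the ratio has the claimed exponential rate. More importantly, you cannot get the numerator rate to $f_\mu(T,\alpha\beta)$ merely by shrinking $\calO$: $f_\mu(T,\alpha\beta)$ is the infimum over neighborhoods constraining \emph{both} the $\A$- and $\B$-side statistics to depth $k$, whereas $\mu_n$ only pins the $\B$-side to depth $0$. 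Whether a $\sigma\sim\mu_n$ typically produces good $k$-step $\B$-statistics for $\mb{y}_n$ is precisely the nontrivial question, and you have not argued it.

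Second, the ``concentration/annealed-to-quenched'' and ``second-moment'' arguments you anticipate are not needed and do not appear in the paper. The theorem is an annealed statement about expectations, and the proof is a pure first-moment weight-counting argument. Introducing a second-moment step would be a different and substantially harder route.

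Third, and most seriously, you have not located where the Markov hypothesis actually does its work. Saying ``one-step statistics already determine the higher-order $\beta$-data'' is not quite right and is not how the proof proceeds: a random $\sigma$ compatible with $W_{\sigma_n,\mb{y}_n}$ at depth $0$ can in principle have very different $k$-step $\beta$-statistics. The real mechanism, used only in the lower bound, is Corollary~\ref{cor:markovF}: for Markov $\beta^G_*\mu$ one has $F_\mu(T,\beta^k)=F_\mu(T,\beta)$ for all $k$. This is invoked to show that the probability $\PP_{\sigma\sim\mu_n}(W_{\sigma,\mb{y}_n^k}=\widetilde{W_\B})$ of realizing a specific $k$-step $\B$-weight extension $\widetilde{W_\B}$ is at least $\exp(n(F(\widetilde W_\B)-F(W_{\sigma_n,\mb{y}_n})-o(1)))=\exp(-o(n))$, i.e.\ only subexponentially small, so it does not drag down the lower bound. (The upper bound, by contrast, uses only monotonicity under splitting, Lemma~\ref{lem:splittings}, and needs no Markov assumption.) Your proposal also does not mention the essential combinatorial input for the lower bound, Lemma~\ref{lem:denomnapprox}, which constructs a denominator-$n$ $(\A\times\B)$-weight close to the target with a prescribed $\B$-marginal; without it you cannot show that the sum over compatible weights is nonempty, which is the whole point of the lower bound. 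Finally, your claim that $\lim_n d_0^*(P_{\mb{y}_n}^{\sigma_n},\beta^G_*\mu)=0$ ``forces'' $f_\mu(T,\beta)>-\infty$ is not the right justification; for a Markov measure $f_\mu(T,\beta)=F_\mu(T,\beta)$, which is finite simply because $\beta$ is a finite observable.
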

\begin{mainprop}
\label{prop:main1}
	The assumptions of Theorem \ref{thm:main1} are nonvacuous; that is, for any finite observable $\beta \colon X \to \B$ there exist sequences $\{\mb{y}_n \in \B^n\}_{n=1}^\infty$ and $\{ \sigma_n \in \Hom(G, \Sym(n)) \}_{n=1}^\infty$ such that $\lim_{n \to \infty} d_0^*(P_{\mb{y}_n}^{\sigma_n}, \beta^G_*\mu) = 0$.
\end{mainprop}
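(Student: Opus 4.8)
The goal is to produce, for an arbitrary finite observable $\beta\colon X\to\B$, sequences $\mb{y}_n\in\B^n$ and $\sigma_n\in\Hom(G,\Sym(n))$ whose empirical distributions converge (in $d_0^*$) to $\beta^G_*\mu$. The natural strategy is a random construction: choose the $\sigma_n$ uniformly at random among all homomorphisms and choose $\mb{y}_n$ to be a good model for $\beta$ over $\sigma_n$, then show such a good model exists with high probability using Bowen's own formula for the $f$-invariant. Concretely, I would argue in three steps.

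\textbf{Step 1: Reduce to a lower bound on the number of good models.} Since $d_0^*$-convergence $P^{\sigma_n}_{\mb{y}_n}\to\beta^G_*\mu$ is exactly the statement that $\mb{y}_n$ is an increasingly good model for $\beta$ over $\sigma_n$, it suffices to find, for every $\varepsilon>0$ and all large $n$, some $\sigma\in\Hom(G,\Sym(n))$ for which $\Omega_0^*(\sigma,\beta,\varepsilon)\neq\emptyset$; a diagonal argument over a sequence $\varepsilon_n\downarrow 0$ then yields the desired sequences. So the whole proposition reduces to nonemptiness of the good-model set for a single (well-chosen) $\sigma$.

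\textbf{Step 2: Use Bowen's formula to get nonemptiness in expectation.} Here I would invoke the formula $f_\mu(T,\beta)=\inf_{\calO\ni\beta^G_*\mu}\limsup_n \tfrac1n\log\EE_{\sigma\sim\mu_n}\abs{\Omega(\sigma,\calO)}$ from \cite{bowen2010}, with $\mu_n=\Unif(\Hom(G,\Sym(n)))$ and $\calO$ the $d_0^*$-ball of radius $\varepsilon$. If $f_\mu(T,\beta)>-\infty$, then for this $\calO$ the quantity $\EE_{\sigma\sim\mu_n}\abs{\Omega(\sigma,\beta,\varepsilon)}$ is eventually positive (indeed exponentially large), hence for each large $n$ there is at least one $\sigma_n$ with $\Omega_0^*(\sigma_n,\beta,\varepsilon)\neq\emptyset$; pick $\mb{y}_n$ in it. The one genuine gap to close is the case $f_\mu(T,\beta)=-\infty$ — then the expectation could decay and we cannot conclude nonemptiness this way. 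The main obstacle is therefore handling this degenerate case.

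\textbf{Step 3: Handle $f_\mu(T,\beta)=-\infty$ directly.} Here I would bypass the entropy formula and construct $(\sigma_n,\mb{y}_n)$ by hand. One clean approach: approximate the measure $\beta^G_*\mu\in\Prob(\B^G)$, which is a $G$-invariant (shift-invariant) measure, by its values on a large ball $\ball{e}{k}$, and realize those finite marginals combinatorially. Fix $k$; the marginal $(\beta^k)_*\mu$ is a distribution on $\B^{\ball{e}{k}}$. Build $[n]$ together with $\sigma_n$ so that the local statistics of the $\sigma_n$-action on $[n]$ (the distribution, over a uniform root $v\in[n]$, of the isomorphism type of the radius-$k$ ball around $v$ in the Schreier graph) converge to those of the tree; this is possible because a uniformly random homomorphism has, with high probability, locally tree-like Schreier graphs (Benjamini--Schramm convergence to the $2r$-regular tree). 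Then define $\mb{y}_n(v)$ by sampling, independently across a maximal "spread-out" set of roots, from the conditional law $(\beta^k)_*\mu$ restricted to the ball around $v$, and fill in consistently; a law-of-large-numbers / concentration argument shows the empirical distribution $P^{\sigma_n}_{\mb{y}_n}$ matches $\beta^G_*\mu$ on $\ball{e}{k}\cup\ball{s_i}{k}$ up to $o(1)$ error. Diagonalizing over $k$ gives $d_0^*\to 0$. Since $d_0^*$ only sees marginals on balls of radius $\leq 1$ around $e$ and the $s_i$, it actually suffices to control two-point statistics, which simplifies the bookkeeping considerably; I expect the only delicate point is ensuring the random fill-in can be made globally consistent on a genuine permutation representation, which standard "configuration model" style arguments (pairing half-edges) resolve.

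Overall, Step 2 is the short route that works whenever $f_\mu(T,\beta)>-\infty$, and the real content — and the main obstacle — is the explicit combinatorial construction in Step 3 for the degenerate case; if the paper is content to cite a Benjamini--Schramm / sofic-approximation existence result, Step 3 collapses to a reference.
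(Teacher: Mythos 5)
The paper's proof is a short, deterministic two-step construction that needs no case split. By Lemma \ref{lem:bowenweightapprox} there is a denominator-$n$ $\B$-weight $W_n$ with $d(W_n, W_\beta) \to 0$; by Proposition \ref{prop:weightinverse} (Bowen's configuration-model Proposition 2.1) one produces $(\sigma_n, \mb{y}_n)$ with $W_{\sigma_n, \mb{y}_n} = W_n$; and Proposition \ref{prop:dstar} gives $d_0^*(P_{\mb{y}_n}^{\sigma_n}, \beta^G_*\mu) = d(W_n, W_\beta) \to 0$. Your Step 3 --- once you strip away the Benjamini--Schramm and random-fill-in machinery and specialize to two-point ($k=0$) statistics, as you yourself note suffices --- is essentially this construction; the paper just runs it directly for all $\beta$, and your remark that ``Step 3 collapses to a reference'' is exactly what happens.

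The genuine gap is in Step 2. Bowen's formula involves a $\limsup$ in $n$, so even assuming $f_\mu(T,\beta) > -\infty$ you only get $\EE_{\sigma}\abs{\Omega(\sigma,\calO)} > 0$ for infinitely many $n$ (along an $\calO$-dependent subsequence), not for all large $n$. The proposition requires choosing $\mb{y}_n \in \B^n$ for \emph{every} $n$ with $d_0^*(P_{\mb{y}_n}^{\sigma_n}, \beta^G_*\mu) \to 0$ as a genuine limit, so a subsequence is not enough and the diagonalization in Step 1 does not go through. To fill in the missing $n$ you would again need a direct construction of the Step 3 type, at which point the case split on whether $f_\mu(T,\beta)$ is finite is moot: the weight-approximation route handles all cases uniformly, and avoids the probabilistic concentration arguments you sketch, which are more than is needed when only $d_0^*$ (two-point statistics) must be controlled.
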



If $\beta^G_* \mu$ is not Markov, then the same formula holds with a more precise type of stochastic block model:

\begin{mainthm}
\label{thm:main2}
	Let $\alpha \colon X \to \A$ and $\beta \colon X \to \B$ be finite observables. Let $m_n$ approach infinity as $n$ goes to infinity while satisfying $m_n = o(\log \log n)$. For each $n$ let $\mb{y}_n \in \B^n$ and $\sigma_n \in \Hom(G, \Sym(n))$ be such that
		\[ d_{m_n}^*(P_{\mb{y}_n}^{\sigma_n}, \beta^G_*\mu) = O \big( \tfrac{1}{\log n} \big) . \]
	Suppose that $f_\mu(T, \beta) > -\infty$. With $\mu_n = \SBM(\sigma_n, \mb{y}_n, m_n)$,
		\[ f_\mu (T, \alpha \mid \beta) = \inf_{\calO \ni (\alpha\beta)^G_*\mu} \limsupinf_{n \to \infty} \frac{1}{n} \log \EE_{\sigma \sim \mu_n} \abs{\{ \mb{x} \in \A^n \st (\mb{x}, \mb{y}_n) \in \Omega(\sigma, \calO) \}} . \]
\end{mainthm}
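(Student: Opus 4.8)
\emph{Proof strategy.}
The plan is to run the argument in parallel with the proof of Theorem~\ref{thm:main1}, the essential new feature being that the scale of the block model grows with $n$. Here is the conceptual point. At a fixed scale $k$ the model $\SBM(\sigma_0,\mb{y}_0,k)$ retains only the statistics of $\mb{y}_0$ on the double balls $\ball{e}{k}\cup\ball{s_i}{k}$, so without a Markov assumption on $\beta^G_*\mu$, running the proof of Theorem~\ref{thm:main1} at scale $k$ computes the relative $f$-invariant of $\alpha$ over a ``$k$-step Markovization'' of $\beta^G_*\mu$ rather than over $\beta^G_*\mu$ itself; letting $k=m_n\to\infty$ washes out the discrepancy. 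Concretely, let $\mb{z}_n\in(\B^{\ball{e}{m_n}})^n$ record the radius-$m_n$ itinerary of each vertex of $([n],\Unif(n),\sigma_n)$ under $\mb{y}_n$; then directly from the definitions $\SBM(\sigma_n,\mb{y}_n,m_n)=\SBM(\sigma_n,\mb{z}_n,0)$ and $d^*_{m_n}(P^{\sigma_n}_{\mb{y}_n},\beta^G_*\mu)=d^*_0(P^{\sigma_n}_{\mb{z}_n},(\beta^{m_n})^G_*\mu)$, so at each $n$ the set-up is that of Theorem~\ref{thm:main1} for the pair $(\alpha,\beta^{m_n})$, except that $m_n$ and the approximation quality are both tied to $n$. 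I would reuse the weight formalism of Section~\ref{sec:weights}: by the double counting that proves Theorem~\ref{thm:main1}, $\EE_{\sigma\sim\mu_n}\abs{\{\mb{x}:(\mb{x},\mb{y}_n)\in\Omega(\sigma,\calO)\}}$ is the number of pairs $(\mb{x},\sigma)$ with $\sigma\in\supp\mu_n$ and $P^\sigma_{(\mb{x},\mb{y}_n)}\in\calO$, divided by $\abs{\supp\mu_n}$; both counts are sums over weights at scale $m_n$ of products of permutation- and multinomial-counts, to which Stirling applies. Optimising over weights, $\abs{\supp\mu_n}=\abs{\Hom(G,\Sym(n))}\cdot\exp\bigl(n(F_\mu(T,\beta^{m_n})-\shent_\mu(\beta^{m_n})+o(1))\bigr)$, the numerator is the analogous expression with $\beta^{m_n}$ replaced inside $F_\mu$ and $\shent_\mu$ by $(\alpha\beta)^{k_1}$ (with $k_1$ the accuracy demanded by $\calO$) together with $\beta^{m_n}$, and the ratio has exponential rate $F_\mu(T,\alpha^{k_1}\mid\beta^{m_n})$ up to an error vanishing with $\calO$; then $\inf_\calO\limsup_n$ of this equals $\inf_{k_1}\sup_{k_2}F_\mu(T,\alpha^{k_1}\mid\beta^{k_2})=f_\mu(T,\alpha\mid\beta)$ by Lemma~\ref{lem:splittings} and Corollary~\ref{cor:chainrule}.

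For this to be rigorous rather than heuristic, three error terms --- none of them present in the fixed-scale Theorem~\ref{thm:main1} --- must be shown to be $\exp(o(n))$, and each is precisely what one hypothesis delivers. (i) The number of weights at scale $m_n$ is at most $n$ to the power $(\abs{\A}\abs{\B})^{O(\abs{\ball{e}{m_n}})}$; since $\abs{\ball{e}{m_n}}=(\log n)^{o(1)}$ when $m_n=o(\log\log n)$, this is $n^{o(1)}$, so replacing each weight sum by its dominant term costs only $\exp(o(n))$, and by the same token the Stirling corrections on the alphabets $\A^{\ball{e}{m_n}}$ and $\B^{\ball{e}{m_n}}$ are $\exp(o(n))$. (ii) $\mb{z}_n$'s statistics match $(\beta^{m_n})^G_*\mu$ only up to $d^*_0=d^*_{m_n}(P^{\sigma_n}_{\mb{y}_n},\beta^G_*\mu)=O(1/\log n)$, so the entropies $\shent_\mu(\beta^{m_n})$ and $\shent_\mu\bigl((\beta^{m_n})^{\{e,s_i\}}\bigr)$ above must be replaced by those of the measured statistics; since Shannon entropy on an alphabet of size $\abs{\B}^{O(\abs{\ball{e}{m_n}})}$ changes by at most $O\bigl(\tfrac{\abs{\ball{e}{m_n}}}{\log n}\bigr)\log\abs{\B}+O\bigl(\tfrac{\log\log n}{\log n}\bigr)=o(1)$ under a total-variation perturbation of size $O(1/\log n)$, this costs $o(n)$. (iii) One needs the support of $\mu_n$ to be nonempty with exactly the exponential rate named above rather than rate $-\infty$; the identity $f_\mu(T,\alpha\mid\beta)=f_\mu(T,\alpha\beta)-f_\mu(T,\beta)$ and the hypothesis $f_\mu(T,\beta)>-\infty$ keep $F_\mu(T,\beta^{m_n})$ bounded below and ensure the denominator does not swamp the numerator.

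I would write this up as the two inequalities, each proved along an arbitrary sequence of neighborhoods $\calO$ shrinking to $(\alpha\beta)^G_*\mu$ and then optimised over $\calO$. The ``$\le$'' direction is a union bound over weights followed by Stirling and the three estimates. For the ``$\ge$'' direction I would single out one admissible weight --- the empirical discretisation at scale $m_n$ of $(\alpha\beta)^G_*\mu$, adjusted to be exactly consistent with the scale-$m_n$ statistics of $\mb{y}_n$ under $\sigma_n$ --- verify that it forces $(\mb{x},\mb{y}_n)\in\Omega(\sigma,\calO)$ once $n$ is large (using $m_n\to\infty$ against the fixed radius of $\calO$), and bound from below the number of pairs $(\mb{x},\sigma)$ that realise it. I expect this lower-bound construction in the growing-scale regime to be the main obstacle: one must produce weights that are simultaneously close enough to $(\alpha\beta)^G_*\mu$ on balls of growing radius, exactly compatible with $\mb{z}_n$, and ``integral'' (populated by genuine pairs), and then push the three growth conditions through the resulting Stirling estimates uniformly in the joint limit $n\to\infty$, $m_n\to\infty$ --- the diagonalisation that Theorem~\ref{thm:main1} avoids, and where $m_n=o(\log\log n)$, $d^*_{m_n}=O(1/\log n)$, and $f_\mu(T,\beta)>-\infty$ all become indispensable.
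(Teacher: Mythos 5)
Your overall architecture matches the paper's: a two-sided argument via the weight formalism of Section~\ref{sec:weights}, with Proposition~\ref{prop:singleweightestimate} reducing the expectation to a ratio $Z_n(W_{\A\B})/Z_n(W_\B)$ and with three error terms that must be $\exp(o(n))$, and you have correctly identified each error term with the hypothesis that controls it (counting weights at scale $m_n$ with the growth condition, Fano-type entropy continuity with the $O(1/\log n)$ approximation rate, and the finiteness $f_\mu(T,\beta)>-\infty$ together with the chain rule). These correspond closely to the paper's Lemma~\ref{lem:expbound}, Lemma~\ref{lem:Flimit}, and Corollary~\ref{cor:chainrule}.

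However, one step you present as trivial is actually false as stated. You write that ``directly from the definitions $\SBM(\sigma_n,\mb{y}_n,m_n)=\SBM(\sigma_n,\mb{z}_n,0)$,'' where $\mb{z}_n$ is the radius-$m_n$ itinerary of $\mb{y}_n$ \emph{under $\sigma_n$}. Unwinding the definitions, the constraint defining the left-hand side is $W_{\sigma,\,\mb{y}_n^{m_n}}=W_n$ where the itinerary $\mb{y}_n^{m_n}$ is computed with the \emph{random} $\sigma$, while the constraint on the right is $W_{\sigma,\,\mb{z}_n}=W_n$ where $\mb{z}_n$ is fixed (itinerary under $\sigma_n$). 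These are not the same set of $\sigma$'s: one can build examples (e.g.~$G=\ZZ$, $n=4$, a $4$-cycle $\sigma_n$ and the reversed $4$-cycle $\sigma$, $\mb{y}_n=(0,0,1,1)$, $m_n=1$) where $\sigma$ lies in the support of $\SBM(\sigma_n,\mb{y}_n,1)$ but not of $\SBM(\sigma_n,\mb{z}_n,0)$. The containment $\supp\SBM(\sigma_n,\mb{z}_n,0)\subseteq\supp\SBM(\sigma_n,\mb{y}_n,m_n)$ does hold, but it requires the nontrivial inductive fact (Claim 3 in the paper's lower-bound argument) that any $\mb{Y}$ with $W_{\sigma,\mb{Y}}=W_n$ is automatically the $\sigma$-itinerary of $\pi_e\mb{Y}$; the reverse containment fails. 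The paper avoids this trap by never pinning down the auxiliary labeling: the counting in Proposition~\ref{prop:singleweightestimate} and the preceding unnamed lemma average over all \emph{pairs} $(\sigma,\mb{y})$ and only then pass between $\mb{y}^k$ and general $\mb{Y}$, using the symmetry in the choice of $\mb{y}$ with fixed letter frequencies. If you run your plan with the false identity you will miscount the normalizing constant $\abs{\supp\mu_n}$.

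Finally, you are right that the lower bound's attainability is the substantial obstacle, and you name the right ingredients, but this is where most of the actual work lives: one needs Lemma~\ref{lem:denomnapprox} (constructing a denominator-$n$ $(\A^{\ball{e}{k_1}}\times\B^{\ball{e}{m_n}})$-weight with a prescribed $\B$-marginal, proved over several pages in Section~\ref{sec:denomnapproxproof}), Proposition~\ref{prop:weightinverse} to realize it by a triple $(\tilde\sigma,\mb{\tilde X},\mb{\tilde Y})$, and the word-length induction (the paper's Claim 3) to show $\mb{\tilde Y}$ is itself an itinerary, hence that the weight is attainable by a pair compatible with $\mu_n$. A complete proof would have to supply these constructions explicitly.
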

\begin{mainprop}
\label{prop:main2}
	The assumptions of Theorem \ref{thm:main2} are nonvacuous; that is, for any finite observable $\beta \colon X \to \B$ and any sequence $\{m_n \in \NN\}_{n=1}^\infty$ approaching infinity while satisfying $m_n = o(\log \log n)$, there exist sequences $\{\mb{y}_n \in \B^n\}_{n=1}^\infty$ and $\{ \sigma_n \in \Hom(G, \Sym(n)) \}_{n=1}^\infty$ such that $\lim_{n \to \infty} d_{m_n}^*(P_{\mb{y}_n}^{\sigma_n}, \beta^G_*\mu) = O \left( \frac{1}{\log n} \right)$.
\end{mainprop}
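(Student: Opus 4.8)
The plan is to construct the required sequences by hand; the crucial point is that the hypothesis $m_n = o(\log\log n)$ keeps the number of relevant ``radius-$m_n$ local patterns'' over $[n]$ at $n^{o(1)}$, so that a matching-type realization of these local statistics incurs only an error of order $n^{-1+o(1)} = o(1/\log n)$. Write $\nu = \beta^G_*\mu \in \Prob(\B^G)$; it is shift-invariant. I would first replace $\nu$ by a Markov approximation: for each $n$ let $\nu^{(n)}$ be the order-$(m_n+1)$ Markovization of $\nu$, i.e.\ the shift-invariant Markov measure of order $m_n+1$ on $\B^G$ whose marginal on $\ball{e}{m_n+1}$ coincides with that of $\nu$; in particular $\nu^{(n)}$ and $\nu$ agree on the marginal on every subset of $\ball{e}{m_n+1}$. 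Since $d(g,s_i)\le m_n$ implies $d(g,e)\le m_n+1$, we have $\ball{e}{m_n}\cup\ball{s_i}{m_n}\subseteq\ball{e}{m_n+1}$ for every $i$, and hence $d^*_{m_n}(\nu^{(n)},\nu) = 0$. By the triangle inequality for the pseudometric $d^*_{m_n}$ it therefore suffices to produce, for each $n$, a pair $(\sigma_n,\mb{y}_n)$ with $d^*_{m_n}(P^{\sigma_n}_{\mb{y}_n},\nu^{(n)}) = o(1/\log n)$.

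The second and main step is to realize $\nu^{(n)}$ on $[n]$ vertices up to small error. This is the radius-$0$ construction underlying Proposition \ref{prop:main1}, but carried out on the alphabet $\B^{\ball{e}{m_n+1}}$ of labeled radius-$(m_n+1)$ windows so that it controls statistics on balls of radius $m_n$ rather than merely on edges: one fixes a window-labeling of $[n]$ whose window frequencies match those of $\nu^{(n)}$ up to rounding, and then, generator by generator, chooses permutations realizing the prescribed labeled-transition frequencies by a bipartite matching --- constrained so that overlapping windows remain consistent, which is possible because $\nu^{(n)}$ is supported on consistent configurations and its transition frequencies have the correct margins. This yields a genuine $\B$-labeling $\mb{y}_n$, and for every \emph{tree-like} vertex $v\in[n]$ --- one whose radius-$m_n$ ball in the graph of $\sigma_n$ is isometric to a ball in the Cayley graph of $G$ --- the induced labeled ball $\mb{y}_n^{m_n}(v)$ has, up to the accumulated rounding error, the distribution prescribed by $\nu^{(n)}$. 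A routine first-moment estimate over the compatible choices of $\sigma_n$ lets us take $\sigma_n$ so that all but $O\big((2r-1)^{2m_n}\big)$ vertices are tree-like. Altogether $d^*_{m_n}(P^{\sigma_n}_{\mb{y}_n},\nu^{(n)})$ is bounded by a fixed power of $P_n := \abs{\B}^{\abs{\ball{e}{m_n+1}}}$ plus $(2r-1)^{2m_n}$, all divided by $n$.

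It remains to do the arithmetic, which is where the hypothesis enters. For $r\ge 2$ one has $\abs{\ball{e}{m_n+1}} = O\big((2r-1)^{m_n}\big)$, and $m_n = o(\log\log n)$ gives $(2r-1)^{m_n} = e^{m_n\log(2r-1)} = e^{o(\log\log n)} = o(\log n)$; hence $P_n = e^{O((2r-1)^{m_n})} = e^{o(\log n)} = n^{o(1)}$, and likewise $(2r-1)^{2m_n} = n^{o(1)}$. Therefore $d^*_{m_n}(P^{\sigma_n}_{\mb{y}_n},\nu^{(n)}) = n^{-1+o(1)} = o(1/\log n)$, and combined with $d^*_{m_n}(\nu^{(n)},\nu) = 0$ this gives $d^*_{m_n}(P^{\sigma_n}_{\mb{y}_n},\nu) = o(1/\log n) = O(1/\log n)$, as required. (The rank-one case $G = \ZZ$, where $\abs{\ball{e}{m}} = 2m+1$, is identical and needs only $m_n = o(\log n)$.)

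I expect the realization step to be the main obstacle: proving rigorously that the constrained matching construction reproduces the $\nu^{(n)}$-statistics on the tree-like radius-$m_n$ balls with rounding error only polynomial in $P_n$, while simultaneously keeping the non-tree-like vertices an $n^{o(1)}$-fraction. This is essentially the technical content of Proposition \ref{prop:main1} combined with the ``weights'' formalism of Section \ref{sec:weights}; for the present proposition it only has to be run with radius $m_n$ and with the dependence of the error on $n$ tracked explicitly.
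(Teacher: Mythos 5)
Your outline is in the right neighborhood — approximate the empirical statistics of $\beta^G_*\mu$ on a radius-$m_n$ window by something realizable on $n$ vertices, then observe that $m_n = o(\log\log n)$ makes the window alphabet size $n^{o(1)}$ so the rounding/realization error is $o(1/\log n)$ — and the final arithmetic is exactly the paper's (packaged there as Lemma \ref{lem:expbound}). But the realization step, which you yourself flag as the main obstacle, is a genuine gap, and the paper fills it with a different and cleaner device than the ``constrained matching'' you sketch.

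Your proposal asks for a matching that simultaneously (a) realizes the prescribed transition frequencies of $\B^{\ball{e}{m_n+1}}$-valued windows, and (b) keeps neighboring windows consistent so that the window labeling arises from a genuine $\B$-labeling $\mb{y}_n$ via $\mb{y}_n^{m_n+1}$. Constraint (b) is not a margin condition on the transition weight but a global compatibility condition on the choice of $\sigma$, and it is not at all clear that a permutation solving the constrained problem exists; ``the transition frequencies have the correct margins'' is necessary but far from sufficient. The paper avoids this entirely. It first approximates $W_{\beta^{m_n}}$ by a denominator-$n$ $\B^{\ball{e}{m_n}}$-weight $W_n$ (Lemma \ref{lem:bowenweightapprox}), then invokes Proposition \ref{prop:weightinverse} to realize $W_n = W_{\sigma_n,\mb{Y}_n}$ with $\mb{Y}_n \in (\B^{\ball{e}{m_n}})^n$ \emph{unconstrained}, i.e.\ with no requirement that $\mb{Y}_n$ be of the form $\mb{y}^{m_n}$ for any $\mb{y}\in\B^n$. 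The consistency problem is then handled a posteriori by Proposition \ref{prop:weightballapprox}: since $W_{\sigma_n,\mb{Y}_n}$ is close to the consistent weight $W_{\beta^{m_n}}$, setting $\mb{y}_n = \pi_e \mb{Y}_n$ gives $W_{\sigma_n,\mb{y}_n^{m_n}}$ close to $W_{\sigma_n,\mb{Y}_n}$, with an explicit $2r\abs{\ball{e}{m_n}}$ multiplicative loss. That loss is swallowed by the same $n^{o(1)}$ arithmetic. No tree-likeness argument, no first-moment estimate over random $\sigma$, and no $(2r-1)^{2m_n}$ correction term is needed.

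Two smaller remarks. First, the Markovization $\nu^{(n)}$ is a harmless but pointless detour: since $\nu^{(n)}$ and $\nu$ have the same marginal on $\ball{e}{m_n+1}$, and $d^*_{m_n}$ only sees marginals on sets contained in $\ball{e}{m_n+1}$, replacing $\nu$ by $\nu^{(n)}$ changes nothing — you would be proving the identical statement. Second, you work with the alphabet $\B^{\ball{e}{m_n+1}}$ rather than $\B^{\ball{e}{m_n}}$; this is an artifact of not exploiting Proposition \ref{prop:dstar}, which shows that $d^*_k$ is exactly $d(W_{\sigma,\mb{x}^k}, W_{\alpha^k})$ and hence that radius-$k$ windows with radius-$1$ edge statistics suffice.
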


The expressions appearing on the right-hand sides of Theorems \ref{thm:main1} and \ref{thm:main2} are very closely related to Ben Hayes' definition of ``relative sofic entropy in the presence'' \cite[Definition 2.5]{hayes2016}. Some differences are that we consider expected numbers of good models over random sofic approximations, and that Hayes takes a supremum inside the logarithm over which good model is to be extended, while we fix a sequence $\{\mb{y}_n\}$ of planted good models. Hayes also does not restrict to shift systems as we do here.

\bigbreak

Using Theorem \ref{thm:main2} we prove the following formula for the growth rate of the expected number of good models over a homomorphism drawn from a stochastic block model:
	
\begin{mainthm}
\label{thm:main3}
	Let $\mu_n, \alpha, \beta$ be as in the statement of Theorem \ref{thm:main2}. Then
		\[ \inf_{\calO \ni \alpha^G_*\mu} \limsup_{n \to \infty} \frac{1}{n} \log \EE_{\sigma \sim \mu_n} \abs{\Omega(\sigma, \calO)} = \sup_{\lambda \in \join(\alpha_*^G \mu,\, \beta_*^G \mu)} f_\lambda (S, \amap \mid \bmap) . \]
\end{mainthm}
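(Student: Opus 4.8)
The plan is to deduce Theorem~\ref{thm:main3} from Theorem~\ref{thm:main2} applied to a family of auxiliary shift systems indexed by joinings, followed by an optimization over the parameter. Given $\lambda\in\join(\alpha_*^G\mu,\beta_*^G\mu)$, view it as the $G$-system $((\A\times\B)^G,\lambda,S)$ with the time-zero coordinate observables $\amap\colon(\A\times\B)^G\to\A$ and $\bmap\colon(\A\times\B)^G\to\B$. Since $\lambda$ is a joining, $\bmap^G_*\lambda=\beta^G_*\mu$, so the hypothesis $d^*_{m_n}(P^{\sigma_n}_{\mb y_n},\bmap^G_*\lambda)=O(1/\log n)$ of Theorem~\ref{thm:main2} is exactly the one we already have, with the same $\mb y_n$, $\sigma_n$, $m_n$; and since $f_\lambda(S,\bmap)$ depends on $\bmap$ only through $\bmap^G_*\lambda=\beta^G_*\mu$ (as is immediate from the definition of the $f$-invariant), $f_\lambda(S,\bmap)=f_\mu(T,\beta)>-\infty$. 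Thus Theorem~\ref{thm:main2} applies to this system with the \emph{identical} $\mu_n=\SBM(\sigma_n,\mb y_n,m_n)$; as $(\amap\bmap)^G$ is the identity on $(\A\times\B)^G$, so $(\amap\bmap)^G_*\lambda=\lambda$, and as the balls $\{\nu:d^*_k(\lambda,\nu)<\varepsilon\}$ form a neighborhood basis at $\lambda$, it gives
\[
 f_\lambda(S,\amap\mid\bmap)=\inf_{k,\varepsilon}\,\limsup_{n\to\infty}\frac1n\log\EE_{\sigma\sim\mu_n}\abs*{\{\mb x\in\A^n\st d^*_k(P^\sigma_{(\mb x,\mb y_n)},\lambda)<\varepsilon\}}.
\]
By the same cofinality of $d^*_k$-balls, the left side of Theorem~\ref{thm:main3} equals $\inf_{k,\varepsilon}\limsup_n\frac1n\log\EE_{\sigma\sim\mu_n}\abs{\Omega^*_k(\sigma,\alpha,\varepsilon)}$.

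For the inequality ``$\ge$'', fix $\lambda$. Since $P^\sigma_{\mb x}=\pi^\A_*P^\sigma_{(\mb x,\mb y_n)}$, $\pi^\A_*\lambda=\alpha^G_*\mu$, and $d^*_k(\pi^\A_*\rho,\pi^\A_*\rho')\le d^*_k(\rho,\rho')$ for all $\rho,\rho'$ (marginalization is a contraction for $d^*_k$), we have the inclusion $\{\mb x:d^*_k(P^\sigma_{(\mb x,\mb y_n)},\lambda)<\varepsilon\}\subseteq\Omega^*_k(\sigma,\alpha,\varepsilon)$ for every $k,\varepsilon$. Applying $\EE_{\sigma\sim\mu_n}$, then $\frac1n\log$, $\limsup_n$, $\inf_{k,\varepsilon}$ shows the left side of Theorem~\ref{thm:main3} is $\ge f_\lambda(S,\amap\mid\bmap)$; taking the supremum over $\lambda$ gives this direction.

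The inequality ``$\le$'' is the substantive half. Here I would fix $\delta>0$, $k\in\NN$, small $\varepsilon>0$, and split $\Omega^*_k(\sigma,\alpha,\varepsilon)$ according to the level-$k$ weight (Section~\ref{sec:weights}) of $(\mb x,\mb y_n)$. For $\sigma\in\supp\mu_n$ and $n$ large enough that $m_n\ge k$, the weight of $\mb y_n$ is pinned to within $O(1/\log n)$ of the level-$k$ statistics of $\beta^G_*\mu$; since there are only polynomially many weights, the left side of Theorem~\ref{thm:main3} is at most $\sup_w\limsup_n\frac1n\log\EE_{\sigma\sim\mu_n}\abs{\{\mb x\in\A^n\st\text{weight}(\mb x,\mb y_n)=w\}}$, the supremum ranging over weights $w$ whose $\A$-part is $\varepsilon$-close to the level-$k$ statistics of $\alpha^G_*\mu$ and whose $\B$-part is that of $\mb y_n$. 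For each such $w$ the inner growth rate is a relative $F$-functional of $w$ (the output of the weight machinery); it vanishes unless $w$ is single-site consistent and depends on $w$ only through its one-step part, so $w$ may be replaced by the level-$k$ weight of the shift-invariant (tree-Markov) measure with the same one-step data without affecting it. Hence the supremum reduces to one over shift-invariant measures $\nu$ with $d^*_k(\pi^\A_*\nu,\alpha^G_*\mu)\le\varepsilon$ and $d^*_k(\pi^\B_*\nu,\beta^G_*\mu)\le\varepsilon$, on which, by Theorem~\ref{thm:main2} applied to $((\A\times\B)^G,\nu,S)$ as above, this $F$-functional is continuous in $\nu$, nonincreasing in $k$, and has $\inf_k$ equal to $f_\nu(S,\amap\mid\bmap)$. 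These sets of measures are weak$^*$-compact and, as $k\to\infty$ and $\varepsilon\to0$, decrease to $\join(\alpha_*^G\mu,\beta_*^G\mu)$; a Cantor-intersection argument therefore exchanges $\inf_k$ with $\sup_\nu$ and bounds the left side of Theorem~\ref{thm:main3} by $\sup_{\lambda\in\join(\alpha_*^G\mu,\beta_*^G\mu)}f_\lambda(S,\amap\mid\bmap)+\delta$; letting $\delta\to0$ completes the argument.

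The main obstacle is making the preceding paragraph rigorous, above all the reduction of an arbitrary joint weight to a shift-invariant one. The delicacy is genuine: the empirical distribution $P^\sigma_{(\mb x,\mb y_n)}$ of a good model $\mb x$ for $\alpha$ need not be even approximately shift-invariant --- the labeling $i\mapsto\mb y_n(\sigma_{s_1}i)$, and exponentially many similar ones, are good models for $\alpha$ whose joint empirical distribution with $\mb y_n$ remains a fixed positive $d^*_1$-distance from the set of joinings --- so one cannot simply cover $\Omega^*_k(\sigma,\alpha,\varepsilon)$ by preimages of neighborhoods of $\join(\alpha_*^G\mu,\beta_*^G\mu)$. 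What saves the argument is that the \emph{expected} count of such $\mb x$, unlike the count over a fixed $\sigma$, is governed by a functional of the one-step statistics that is extremized at a shift-invariant weight, so these exotic models never dominate, to exponential order, the models produced by genuine joinings. Pinning down the weight formalism and keeping the order of the limits ($\delta$, $k$, $\varepsilon$, $n$) straight in the minimax exchange is where most of the work will lie.
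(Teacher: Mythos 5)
Your ``$\geq$'' direction is essentially the paper's: fix a joining $\lambda$, observe that marginalization onto $\A^G$ is a contraction for the relevant metric so $\{\mb{x}\st d(P^\sigma_{(\mb{x},\mb{y}_n)},\lambda)<\varepsilon\}\subseteq\{\mb{x}\st d(P^\sigma_{\mb{x}},\alpha^G_*\mu)<\varepsilon\}$, and apply Theorem~\ref{thm:main2} to the shift system $((\A\times\B)^G,\lambda,S)$ with observables $\amap,\bmap$ and the \emph{same} $\mu_n$. Your verification that Theorem~\ref{thm:main2}'s hypotheses carry over ($\bmap^G_*\lambda=\beta^G_*\mu$, $f_\lambda(S,\bmap)=f_\mu(T,\beta)>-\infty$) is exactly the right point, and it matches the paper.

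The ``$\leq$'' direction is where the proposal goes wrong, and the error begins with the claimed obstacle. You assert that $P^\sigma_{(\mb{x},\mb{y}_n)}$ ``need not be even approximately shift-invariant,'' and use this to dismiss the straightforward covering strategy. That is false: for \emph{any} $\sigma\in\Hom(G,\Sym(n))$ and any labeling $\mb{z}$, the empirical distribution $P^\sigma_{\mb{z}}=\mb{z}^G_*\Unif(n)$ is \emph{exactly} shift-invariant, because $\Unif(n)$ is $\sigma(g)$-invariant for every $g$ and $\mb{z}^G\circ\sigma(g)=S_g\circ\mb{z}^G$. Your proposed ``exotic'' models $\mb{x}(i)=\mb{y}_n(\sigma(s_1)i)$ are a red herring: their joint empirical distribution with $\mb{y}_n$ is shift-invariant with both marginals $\varepsilon$-close to the targets, hence lies in the set of approximate joinings $\join_\varepsilon$; in fact it is close to the genuine joining given by the law of $(S_{s_1^{-1}}Y,\,Y)$ with $Y\sim\beta^G_*\mu$, so these models \emph{are} accounted for by the right-hand side. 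Once you know shift-invariance is automatic, the count $\EE_{\sigma\sim\mu_n}\abs{\Omega(\sigma,\calO)}$ (for $\calO$ a $d$-ball around $\alpha^G_*\mu$) equals $\EE_{\sigma\sim\mu_n}\abs{\{\mb{x}\st P^\sigma_{(\mb{x},\mb{y}_n)}\in\join_\varepsilon\}}$ for all large $n$, because $\sigma\sim\mu_n$ forces $d(P^\sigma_{\mb{y}_n},\beta^G_*\mu)<\varepsilon$ almost surely. The paper then finishes by a compactness argument you do not attempt: cover the weak$^*$-compact set $\join_\varepsilon$ by finitely many $\varepsilon$-balls $\ball{\lambda_i}{\varepsilon}$, push $\frac1n\log$ through the resulting finite sum (the number of balls is independent of $n$), choose a near-optimal $\lambda_m\in\join_{1/m}$ for each $m$, extract a weak$^*$-convergent subsequence $\lambda_{m_j}\to\lambda_0\in\join(\alpha^G_*\mu,\beta^G_*\mu)$, and observe that $\ball{\lambda_{m_j}}{1/m_j}\subseteq\ball{\lambda_0}{\delta}$ for large $j$, reducing the whole expression to $f_{\lambda_0}(S,\amap\mid\bmap)$ via Theorem~\ref{thm:main2}. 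Your alternative of decomposing by level-$k$ weights and ``replacing $w$ by the level-$k$ weight of the shift-invariant tree-Markov measure with the same one-step data'' is not needed and is not made rigorous; the minimax exchange you gesture at is precisely what the compactness-plus-diagonal-subsequence argument supplies cleanly. I recommend you discard the weight-level reduction entirely, establish the shift-invariance of empirical distributions explicitly, and carry out the finite-cover-and-subsequence argument.
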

Here $\join(\alpha_*^G \mu,\, \beta_*^G \mu)$ is the set of joinings of the $G$-systems $(\A^G, \alpha^G_*\mu, S)$ and $(\B^G, \beta^G_*\mu, S)$, i.e. shift-invariant probability measures on $(\A \times \B)^G$ whose $\A^G, \B^G$ marginals are $\alpha_*^G \mu,\, \beta_*^G \mu$, respectively.
$S$ denotes the shift action of $G$. We use $\amap, \bmap$ to denote the maps
\begin{align*}
	\amap \colon (\A\times \B)^G &\to \A			&	\bmap \colon (\A \times \B)^G &\to \B \\
	\big( (a_g,b_g) \big)_{g \in G} &\mapsto a_e	&	\big( (a_g,b_g) \big)_{g \in G} &\mapsto b_e 
\end{align*}
which observe the $\A$ (resp. $\B$) label at the identity.

Note: the supremum is always greater than or equal to $f_\mu (T, \alpha)$, with equality attained by the product joining; this means that the expected number of good models for $\alpha$ over a block model with built-in good models for any $\beta$ is at least the expected number of good models over a uniformly random homomorphism. It is possible for the supremum to be strictly larger, however. For example, suppose $f_\mu (T, \alpha) < 0$ and $\alpha = \beta$, and let  $\lambda$ be the diagonal joining. Then
	\[ f_{\lambda}(S, \amap \mid \bmap) = 0 > f_\mu (T, \alpha) . \]

\subsection{Random sofic approximations}

The $f$-invariant is closely related to another invariant of measure-preserving systems called sofic entropy, which was introduced by Lewis Bowen in \cite{bowen2010b}.

A homomorphism $\sigma \in \Hom(G, \Sym(n))$ is called $(D,\delta)$-sofic for some finite $D \subset G$ and $\delta > 0$ if 
	\[ \abs*{\{ j \in [n] \st \sigma(\gamma) j \neq j \ \forall \gamma \in D \setminus\{e\} \}} > (1-\delta) n . \]
A sequence of homomorphisms $\Sigma = \big(\sigma_n \in \Hom(G, \Sym(n)) \big)_{n \in \NN}$ is called a sofic approximation if for every $(D,\delta)$ the homomorphism $\sigma_n$ is $(D,\delta)$-sofic for all large enough $n$.

The sofic entropy relative to $\Sigma$ is the exponential growth rate of the number of good models over $\sigma_n$. Specifically, if there is some finite observable $\alpha$ which generates the Borel $\sigma$-algebra on $X$ then we have
	\[ \h_\Sigma(\mu, T) = \inf_{\calO \ni \alpha_*^G \mu} \limsup_{n \to \infty} \frac{1}{n} \log \abs*{\Omega (\sigma_n, \calO)} . \]
	
By analogy with this expression, we might call the sequences of random homomophisms appearing in expressions above ``random sofic approximations.'' The following proposition provides further justification for this terminology.

\begin{prop}
\label{prop:sofic}
	If $(\mu_n)$ is any of the sequences appearing in Theorems \ref{thm:main1}, \ref{thm:main2}, and \ref{thm:main3}, then for any $(D,\delta)$ there exists $\varepsilon>0$ such that
		\[ \PP_{\sigma \sim \mu_n} \big( \sigma \text{ is } (D,\delta) \text{-sofic} \big) \geq 1 - n^{-\varepsilon n} \]
	for all large enough $n$.
\end{prop}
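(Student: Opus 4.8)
The plan is to reduce the claim to a large‑deviation estimate for the uniform model $\Unif(\Hom(G,\Sym(n)))$ and then transfer it to $\mu_n$ via a crude comparison of measures. Fix $(D,\delta)$ and let $W$ be the finite set of nontrivial reduced words of length at most $\max_{\gamma\in D}\abs{\gamma}$. If $\sigma$ fails to be $(D,\delta)$‑sofic then $\abs{\{j\in[n]:\sigma(\gamma)j=j\}}\ge\frac{\delta}{\abs{D}}n$ for some $\gamma\in W$, so a union bound over $W$ reduces everything to producing, for each fixed nontrivial reduced word $\gamma$, a constant $\varepsilon_\gamma>0$ with $\PP_{\sigma\sim\mu_n}\bigl(\abs{\{j:\sigma(\gamma)j=j\}}\ge\frac{\delta}{\abs{D}}n\bigr)\le n^{-\varepsilon_\gamma n}$ for all large $n$. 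Since each $\mu_n=\SBM(\sigma_n,\mb{y}_n,k_n)$ --- with $k_n=0$ in Theorem \ref{thm:main1} and $k_n=m_n$ in Theorems \ref{thm:main2}--\ref{thm:main3} --- is the uniform measure on a nonempty set $\mathcal S_n\subseteq\Hom(G,\Sym(n))$, we have $\PP_{\mu_n}(E)\le\PP_{\mathrm{unif}}(E)/\rho_n$ for any event $E$, where $\rho_n\coloneqq\abs{\mathcal S_n}\big/(n!)^r$; hence it suffices to combine a lower bound $\rho_n\ge\exp(-o(n\log n))$ with an upper bound $\PP_{\mathrm{unif}}\bigl(\abs{\{j:\sigma(\gamma)j=j\}}\ge t\bigr)\le\exp(-\Omega(n\log n))$ (with $t=\frac{\delta}{\abs{D}}n$), since their quotient is again of the form $\exp(-\Omega(n\log n))\le n^{-\varepsilon_\gamma n}$.

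For the lower bound on $\rho_n$, the case $k_n=0$ is explicit: $\mathcal S_n$ is a product over $i\in[r]$ of the sets of permutations realizing a prescribed matrix $(c^{(i)}_{ab})_{a,b\in\B}$ of $\mb{y}_n$‑transition counts, and the $i$th factor has cardinality $\bigl(\prod_a\abs{\mb{y}_n^{-1}(a)}!\bigr)^2\big/\prod_{a,b}c^{(i)}_{ab}!$, which by Stirling equals $n!\,e^{-\Theta(n)}$, giving $\rho_n=e^{-\Theta(n)}$. When $k_n=m_n$, the hypothesis $m_n=o(\log\log n)$ forces $\abs{\ball{e}{m_n}}=(\log n)^{o(1)}$, so the condition $d^*_{m_n}(P^\sigma_{\mb{y}_n},P^{\sigma_n}_{\mb{y}_n})=0$ imposes only $\abs{\B}^{O(\abs{\ball{e}{m_n}})}=n^{o(1)}$ equalities among the $\mb{y}_n$‑ball statistics of $\sigma$, and because $\sigma_n$ is itself a good model for $\beta^G_*\mu$ these equalities fix each statistic at a value whose deviation from the $\Unif$‑typical one costs only a factor $\exp(o(n\log n))$; the local counting underlying the proof of Theorem \ref{thm:main2} (the weight computations of Section \ref{sec:weights}) then yields $\log\abs{\mathcal S_n}=rn\log n-o(n\log n)$, i.e. $\rho_n\ge\exp(-o(n\log n))$.

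For the upper bound on $\PP_{\mathrm{unif}}$, bound the $t$th factorial moment of $\abs{\{j:\sigma(\gamma)j=j\}}$ by Markov's inequality. Since the uniform measure on $\Hom(G,\Sym(n))=\Sym(n)^r$ is invariant under simultaneous conjugation by $\Sym(n)$, the quantity $\PP_{\mathrm{unif}}\bigl(\sigma(\gamma)j=j\ \forall j\in S\bigr)$ depends only on $\abs{S}$; call it $p_t$ when $\abs{S}=t$. Then $\PP_{\mathrm{unif}}\bigl(\abs{\{j:\sigma(\gamma)j=j\}}\ge t\bigr)\le\binom{n}{t}\,p_t$, and the crux is the estimate $p_t\le(C/n)^{ct}$ for constants $C,c>0$ depending only on $\abs{\gamma}$ (for $\abs{\gamma}=1$ one has exactly $p_t=(n-t)!/n!\le(C/n)^t$). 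Granting it, $\binom{n}{t}(C/n)^{ct}=\exp(-ct\log n+O(t))=\exp(-\Omega(n\log n))$ because $t=\Theta(n)$, as required.

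To prove $p_t\le(C/n)^{ct}$ I would reveal $\sigma(s_1),\dots,\sigma(s_r)$ by exploring, for each $j\in[t]$ in a suitable order, the length‑$\abs{\gamma}$ path $j=j_0\to j_1\to\cdots\to j_{\abs{\gamma}}=j$ spelled out by the letters of $\gamma$ (a letter $s_i^{\pm1}$ revealing one value of $\sigma(s_i)$). Such a path is either already determined by previously revealed values --- in which case the closure $j_{\abs{\gamma}}=j_0$ holds or fails deterministically, and a rank/defect bound caps the number of these vertices by $O(\abs{\gamma})$ times the number of vertices whose paths did reveal something new --- or it traverses a newly revealed value, whose conditional probability of effecting the closure is $O(1/n)$; multiplying these contributions gives the bound with some $c=c(\abs{\gamma})>0$. \textbf{The main obstacle} is exactly this bookkeeping: ensuring the ``free'' closures cannot accumulate to a constant fraction of $[t]$. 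This is a standard but delicate argument about unions of $\gamma$‑labelled paths in $\Sym(n)^r$ --- of the kind used to show that the uniform model is itself (a.s.) a sofic approximation, and appearing already in Bowen's analysis of the $f$‑invariant --- and, together with verifying the local‑counting lower bound $\rho_n\ge\exp(-o(n\log n))$ in the $m_n$‑step case, it is where essentially all the work lies.
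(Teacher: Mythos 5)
Your strategy matches the paper's: write $\mu_n$ as $\zeta_n$ conditioned on the event $\mathcal S_n = \{\sigma : W_{\sigma, \mb{y}_n^{m_n}} = W_n\}$, use the trivial bound $\PP_{\mu_n}(E) \leq \PP_{\zeta_n}(E)/\PP_{\zeta_n}(\mathcal S_n)$, and combine a super\-exponential decay bound for the uniform model with a lower bound $\PP_{\zeta_n}(\mathcal S_n) \geq n^{-\varepsilon n}$ for all $\varepsilon > 0$. Two remarks on where you diverge from the paper.

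First, the paper does not prove the uniform-model estimate: it cites a lemma (attributed to \cite{bowen2020}) and moves on. Your path-exploration/factorial-moment sketch is the standard route to that lemma, and your union bound over $D$, the factorial-moment inequality $\PP(X \geq t) \leq \binom{n}{t} p_t$, and the single-letter computation are all fine; but as you say yourself, the defect-counting in the path exploration is ``essentially all the work,'' so this part of your proposal is a plan rather than a proof. It is a legitimate alternative if you carry it through, but it does a large amount of work the paper explicitly avoids.

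Second, for the lower bound on $\rho_n = \PP_{\zeta_n}(\mathcal S_n)$ you invoke ``the local counting'' and assert $\log\abs{\mathcal S_n} = rn\log n - o(n\log n)$. In the $m_n>0$ case this is exactly where the paper's argument is delicate: $\mathcal S_n$ is defined in terms of the derived labeling $\mb{y}_n^{m_n}$, and the weight-counting machinery (Proposition~\ref{prop:Zbounds}) counts pairs $(\sigma, \mb{Y})$ with $\mb{Y} \in (\B^{\ball{e}{m_n}})^n$ free. The paper bridges this by first symmetrizing over all $\mb{y}$ with the same letter frequencies as $\mb{y}_n$, then using Proposition~\ref{prop:weightballapprox} to show $\mb{Y} \mapsto \pi_e\mb{Y}$ is a bijection between ball-labelings realizing $W_n$ and point-labelings whose $m_n$-blowup realizes $W_n$, giving
\[
\PP_{\zeta_n}(\mathcal S_n) \geq \exp\{-2n\log\abs{\B} + o(n)\}\, \frac{Z_n(W_n)}{(n!)^r},
\]
after which Proposition~\ref{prop:Zbounds} and Lemma~\ref{lem:expbound} finish the job. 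Your sketch is correct in spirit (and your explicit $k_n=0$ count is right, giving $\rho_n = e^{-\Theta(n)}$), but the ``fixed $\mb{y}_n$ versus free $\mb{Y}$'' reduction is a genuine step that needs to appear, not just a reference to weight counting. The assertion $\abs{\ball{e}{m_n}} = (\log n)^{o(1)}$ is correct and is essentially Lemma~\ref{lem:expbound}.

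So: same approach, same key estimate, same reduction. The differences are that you replace a citation by an unfinished argument, and your lower bound on the conditioning probability skips the labeled-ball/labeled-point bookkeeping that the paper spells out.
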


In particular, if $\sigma_1 \sim \mu_1$, $\sigma_2 \sim \mu_2$ etc.~are independent then $(\sigma_n)$ is a sofic approximation with probability 1.

\subsection*{Organization}
In Section \ref{sec:weights} we define weights and discuss some of their useful properties. In Section \ref{sec:Fandf} we prove a few basic results about the functions $f$ and $F$. Some of the results of these two sections are used in Section \ref{sec:nonvacuity} to show that the assumptions of the main theorems are not vacuous. In Section \ref{sec:counting} we show how the function $F$ is related to the number of homomorphism-labeling pairs $(\sigma, \mb{y})$ that realize a given weight, which is the main ingredient of the proofs of Theorems \ref{thm:main1} and \ref{thm:main2} given in the next two sections. In Section \ref{sec:main3pf} we show how to deduce Theorem \ref{thm:main3} from Theorem \ref{thm:main2}. Section \ref{sec:soficpf} contains a proof of Proposition \ref{prop:sofic}. The final section contains a proof of Lemma \ref{lem:denomnapprox}, which asserts that a weight can be approximated by a denominator-$n$ weight with a specified marginal.

\subsection*{Acknowledgements}

Thanks to Tim Austin for suggesting that theorems similar to \ref{thm:main2} and \ref{thm:main3} should hold, for many helpful discussions, and for providing comments on an earlier draft. Thanks also to Ben Hayes for sharing helpful references.

This material is based upon work supported by the National Science Foundation under Grant No.~DMS-1855694.

\section{Weights}
\label{sec:weights}

If $\alpha \colon X \to \A$ is a finite observable, for $a,a' \in \A$ and $i \in [r]$ let
	\[ W_{\alpha}(a,a'; i) = \alpha_*^{\{e,s_i\}} \mu(a,a') = \mu\{ x \in X \st \alpha(x) = a,\ \alpha(T_{s_i} x) = a'\} \]
and also denote
	\[ W_{\alpha}(a) = \alpha_*\mu(a) . \]
For $\mb{x} \in \A^n$ and $\sigma \in \Hom(G, \Sym(n))$ let
	\[ W_{\sigma, \mb{x}}(a,a'; i) = P^{\sigma, \{e,s_i\}}_{\mb{x}} (a,a') \]
and $W_{\sigma, \mb{x}}(a) = P^{\sigma, \{e\}}_{\mb{x}} (a)$.

More abstractly, any $W \in \big( \Prob(\A^2) \big)^r$ is called an \emph{$\A$-weight} if 
	\[ \sum_{a' \in \A} W(a, a'; i) = \sum_{a' \in \A} W(a', a; j) \]
for all $i,j \in [r]$ and $a \in \A$. For each $a \in \A$ we denote this common value $W(a)$. Note that the objects $W_\alpha$ and $W_{\sigma, \mb{x}}$ defined above satisfy this condition.

We say that $W$ has denominator $n$ if $n \cdot W(a,a';i) \in \NN$ for all $a,a',i$.

The measures $W(\cdot,\cdot; i)$ for $i \in [r]$ are called the \emph{edge measures} of $W$, and $W(\cdot)$ is called the \emph{vertex measure}.

For any alphabet $\A$, we use the metric on $\A$-weights defined by
\begin{align*}
	d(W_1, W_2) &\coloneqq \sum_{i \in [r]} \norm*{ W_1(\cdot, \cdot; i) - W_2(\cdot, \cdot; i)}_{\TV} \\
	&= \frac{1}{2} \sum_{i \in [r]} \sum_{a,a' \in \A} \abs{ W_1(a,a'; i) - W_2(a,a'; i)} .
\end{align*}

We can use weights to count good models up to equivalence under the pseudometrics $d^*_k$ using the following proposition:
\begin{prop}
\label{prop:dstar}
If $\sigma \in \Hom(G, \Sym(n))$ and $\mb{x} \in \A^n$, then for any observable $\alpha \colon X \to \A$
	\[ d(W_{\sigma, \mb{x}^k}, W_{\alpha^k}) = d^*_k(P_{\mb{x}}^\sigma, \alpha^G_* \mu) . \]
\end{prop}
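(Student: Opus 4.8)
The plan is to unwind both sides of the claimed identity and show that they are termwise equal. On the right-hand side, $d_k^*(P_{\mb{x}}^\sigma, \alpha^G_*\mu)$ is by definition $\sum_{i \in [r]} d^{\ball{e}{k} \cup \ball{s_i}{k}}(P_{\mb{x}}^\sigma, \alpha^G_*\mu)$, i.e.\ a sum over generators of total-variation distances between the marginals of the two measures on $\A^{\ball{e}{k} \cup \ball{s_i}{k}}$. On the left-hand side, $d(W_{\sigma, \mb{x}^k}, W_{\alpha^k})$ is by definition $\sum_{i \in [r]} \norm{W_{\sigma, \mb{x}^k}(\cdot,\cdot;i) - W_{\alpha^k}(\cdot,\cdot;i)}_{\TV}$, where now the alphabet is $\A^{\ball{e}{k}}$ and the edge measures live on $(\A^{\ball{e}{k}})^2$. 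So the identity will follow generator-by-generator once I show that for each $i$,
\[
	\norm{W_{\sigma, \mb{x}^k}(\cdot,\cdot;i) - W_{\alpha^k}(\cdot,\cdot;i)}_{\TV} = d^{\ball{e}{k} \cup \ball{s_i}{k}}(P_{\mb{x}}^\sigma, \alpha^G_*\mu).
\]

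The key observation is that the edge measure $W_{\alpha^k}(\cdot,\cdot;i)$ is exactly the pushforward of $\mu$ under the map $x \mapsto \big((\alpha^k)(x), (\alpha^k)(T_{s_i}x)\big)$, which records the $\A$-coloring of $\ball{e}{k}$ and of $\ball{s_i}{k} = s_i \ball{e}{k}$ (translated back to $\ball{e}{k}$) — in other words it is a relabeling of the marginal of $\alpha^G_*\mu$ on $\A^{\ball{e}{k} \cup \ball{s_i}{k}}$, where a point of $(\A^{\ball{e}{k}})^2$ is identified with an $\A$-coloring of $\ball{e}{k} \cup \ball{s_i}{k}$ by reading the first coordinate on $\ball{e}{k}$ and the second (shifted by $s_i$) on $\ball{s_i}{k}$. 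The one subtlety is that this identification is not injective: the two balls $\ball{e}{k}$ and $\ball{s_i}{k}$ overlap (in $\ball{e}{k-1} \cup \ball{s_i}{k-1}$ and more), so not every element of $(\A^{\ball{e}{k}})^2$ corresponds to a consistent coloring of the union, and two consistent ones that agree on the union are equal. But the point is that both $W_{\alpha^k}(\cdot,\cdot;i)$ and $W_{\sigma,\mb{x}^k}(\cdot,\cdot;i)$ are supported on the same ``consistent'' subset (since they come from genuine itineraries/empirical distributions, where the overlap constraints are automatically satisfied), and on that subset the identification with $\A^{\ball{e}{k}\cup\ball{s_i}{k}}$ is a bijection. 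Total variation distance is preserved under pushing forward along an injection on the union of the supports, so the TV distance between the edge measures equals the TV distance between the corresponding marginals on $\A^{\ball{e}{k}\cup\ball{s_i}{k}}$, which is $d^{\ball{e}{k}\cup\ball{s_i}{k}}(P_{\mb{x}}^\sigma, \alpha^G_*\mu)$.

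So the steps are: (1) expand both sides into sums over $i \in [r]$; (2) for fixed $i$, exhibit the bijection between the common support of the two edge measures (a subset of $(\A^{\ball{e}{k}})^2$) and a subset of $\A^{\ball{e}{k}\cup\ball{s_i}{k}}$, given by reading off the coloring of $\ball{e}{k}$ from the first coordinate and of $\ball{s_i}{k}$ from the $s_i$-shift of the second; (3) check that under this bijection $W_{\alpha^k}(\cdot,\cdot;i)$ pushes to the $\A^{\ball{e}{k}\cup\ball{s_i}{k}}$-marginal of $\alpha^G_*\mu$ and $W_{\sigma,\mb{x}^k}(\cdot,\cdot;i)$ pushes to that of $P_{\mb{x}}^\sigma$ — this is just unwinding the definitions of $\alpha^G$, of the empirical distribution $P_{\mb{x}}^\sigma = \mb{x}^G_*\Unif(n)$, and of $W_{\sigma,\mb{x}}$; (4) invoke invariance of total variation under a measurable bijection (equivalently, under an injection defined on the union of supports) to conclude. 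The main thing to get right — and the only place where any real care is needed — is the bookkeeping in step (2): being careful that $\ball{s_i}{k} = s_i\cdot\ball{e}{k}$ so that the second coordinate of the weight, which is a coloring of $\ball{e}{k}$, corresponds to the coloring of $\ball{s_i}{k}$ via $g \mapsto s_i g$, and that the overlap of the two balls forces both measures onto the same consistent subset so that the map really is a bijection there and no mass is lost or doubled. Everything else is routine.
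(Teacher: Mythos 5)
Your proposal is correct and follows essentially the same route as the paper: expand $d(W_{\sigma,\mb{x}^k}, W_{\alpha^k})$ as a sum over $i\in[r]$, observe that both edge measures are supported on the set of ``compatible'' pairs $(\mb{a},\mb{a}')\in(\A^{\ball{e}{k}})^2$ that agree where the two balls overlap (because both arise as genuine itineraries), put that set in bijection with $\A^{\ball{e}{k}\cup\ball{s_i}{k}}$, and then conclude since TV distance is preserved by that identification. One bookkeeping slip worth catching: the second coordinate $\alpha^k(T_{s_i}x)$ at position $g$ equals $\alpha(T_g T_{s_i}x)=\alpha(T_{gs_i}x)$, so it encodes the coloring at positions $gs_i$ (right translation), not $s_i g$; correspondingly the paper's compatibility condition is $\mb{a}'_{gs_i^{-1}}=\mb{a}_g$ for $g\in\ball{e}{k}\cap\ball{s_i}{k}$, i.e.\ $\ball{s_i}{k}=\ball{e}{k}s_i$ under the metric convention being used, rather than $s_i\ball{e}{k}$ as you wrote. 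This doesn't change the structure of the argument, but you flagged this very bookkeeping as the only place needing care, so it's worth getting the translation direction straight.
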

Note this implies also that
	\[ d^*_k(P_{\mb{x}}^\sigma, \alpha^G_* \mu) = d^*_0(P_{\mb{x}^k}^\sigma, (\alpha^k)^G_* \mu) . \]
\begin{proof}
	By definition of the distance between weights,
	\begin{align*}
		d(W_{\sigma, \mb{x}^k}, W_{\alpha^k}) &= \frac{1}{2} \sum_{i \in [r]} \sum_{\mb{a}, \mb{a}' \in \A^{\ball{e}{k}}} \abs*{W_{\sigma, \mb{x}^k}( \mb{a}, \mb{a}'; i) - W_{\alpha^k}(\mb{a}, \mb{a}'; i)} \\
		&= \frac{1}{2} \sum_{i \in [r]} \sum_{\mb{a}, \mb{a}' \in \A^{\ball{e}{k}}}
			\Bigg\lvert\,
				\frac{1}{n} \abs*{\left\{ j \in [n] \st \begin{array}{c}
											(\mb{x}^k)_j = \mb{a} \\
											(\mb{x}^k)_{\sigma(s_i)j} = \mb{a}'
										\end{array} \right\}} \\
		&\phantom{= \sum_{i \in [r]} \sum_{\mb{a}, \mb{a}' \in \A^{\ball{e}{k}}}}
				\qquad - \mu \left\{ x \in X \st \begin{array}{c}
											\alpha^k(x) = \mb{a} \\
											\alpha^k(T_{s_i}x) = \mb{a}'
										\end{array} \right\}
			\Bigg\rvert .
	\end{align*}
	For many `incompatible' pairs $\mb{a},\mb{a}'$, both terms will be zero: suppose $g \in \ball{e}{k} \cap \ball{s_i}{k}$, so that $g s_i^{-1} \in \ball{e}{k}$. If the second term in the absolute value is nonzero, then for some $x \in X$ we have $\alpha^k(x) = \mb{a}$ and $\alpha^k(T_{s_i}x) = \mb{a}'$, and therefore
		\[ \mb{a}'_{g s_i^{-1}} = (\alpha^k(T_{s_i}x))_{g s_i^{-1}} = \alpha(T_{g s_i^{-1}} T_{s_i} x) = \alpha(T_g x) = (\alpha^k (x) )_g = \mb{a}_g . \]
	The same argument shows that $\mb{a}'_{g s_i^{-1}} = \mb{a}_g$ for all $g \in \ball{e}{k} \cap \ball{s_i}{k}$ whenever the first term is nonzero. Therefore we can restrict the sum to pairs $\mb{a}, \mb{a}'$ with $\mb{a}'_{g s_i^{-1}} = \mb{a}_g$ for all $g \in \ball{e}{k} \cap \ball{s_i}{k}$. Equivalently, we can sum over all $\mb{A} \in \A^{\ball{e}{k} \cup \ball{s_i}{k}}$ to get
	\begin{align*}
		d(W_{\sigma, \mb{x}^k}, W_{\alpha^k}) &= \frac{1}{2} \sum_{i \in [r]} \sum_{\mb{A} \in \A^{\ball{e}{k} \cup \ball{s_i}{k}}}
			\Bigg\lvert
				\frac{1}{n} \abs*{\left\{ j \in [n] \st \big( \mb{x}^{\ball{e}{k} \cup \ball{s_i}{k}} \big)_j = \mb{A} \right\}} \\
		&\phantom{= \sum_{i \in [r]} \sum_{\mb{a}, \mb{a}' \in \A^{\ball{e}{k}}}}
				\qquad - \mu \left\{ x \in X \st \alpha^{\ball{e}{k} \cup \ball{s_i}{k}} (x) = \mb{A} \right\}
			\Bigg\rvert \\
		&= \sum_{i \in [r]} d^{\ball{e}{k} \cup \ball{s_i}{k}} ( P_{\mb{x}}^\sigma, \alpha_*^G \mu) . \qedhere
	\end{align*}	
\end{proof}

\bigbreak

It will be useful to consider the pushforward map induced by a map between alphabets: if $\pi \colon \A \to \B$ is a measurable map and $W$ is an $\A$-weight, then $\pi W$ is the $\B$-weight given by 
	\[ \pi W(b, b'; i) = \sum_{a \in \pi^{-1}\{b\}} \sum_{a' \in \pi^{-1}\{b'\}} W(a,a'; i) . \]
Note that this implies that the vertex measure of $W$ is
	\[ \pi W(b) = \sum_{a \in \pi^{-1}\{b\}} W(a) . \]
	
For example, let $\pi_\B \colon \A \times \B \to \B$ be the projection map. If $W$ is an $\A \times \B$-weight then $\pi_\B W$ is given by
	\[ \pi_\B W(b_1) = \sum_{a \in \A} W\big((a,b_1)\big) \qquad \pi_\B W(b_1, b_2; i) = \sum_{a_1, a_2 \in \A} W \big((a_1, b_1), (a_2, b_2); i \big) . \]
We call this the $\B$-marginal of $W$.

All weights in the present paper will be over alphabets of the form $\A^{\ball{e}{k}} \times \B^{\ball{e}{k'}}$. We use this fact to introduce some simplified notation for projections:

\begin{itemize}
\item $\pi_A$ denotes projection onto the entire $\A$ factor $\A^{\ball{e}{k}}$; $\pi_B$ is used similarly.

\item For $m<k$ and $m'<k'$, $\pi_{m,m'}$ denotes projection onto $\A^{\ball{e}{m}} \times \B^{\ball{e}{m'}}$.

\item $\pi_m$ denotes the projection $\A^{\ball{e}{k}} \to \A^{\ball{e}{m}}$, except that if $m=0$ we write $\pi_e$.
\end{itemize}

\bigbreak

We define $F(W)$ for an abstract weight $W$ by
	\[ F(W) = (1-2r) \shent \big( W(\cdot) \big) + \sum_{i \in [r]} \shent \big( W(\cdot, \cdot; i) \big) \]
where $H$ is the Shannon entropy. Note that this is consistent with the above definitions in that, for example,
	\[ F(W_\alpha) = F_\mu (T, \alpha) . \]
	
\bigbreak

We can revisit the definition of our version of the stochastic block model using weights: Let $H \subset G$ and let $W$ be a denominator-$n$ $\B^{\ball{e}{k}}$-weight. Suppose there exist $\mb{y} \in \B^n$ and $\sigma \in \Hom(G, \Sym(n))$ such that $W = W_{\sigma, \mb{y}^k}$. Then
	\[ \SBM(\sigma, \mb{y}, k) = \Unif(\{ \sigma' \in \Hom(G, \Sym(n)) \st W_{\sigma', \mb{y}^k} = W \}) , \]
so we can also denote this distribution by $\SBM(\mb{y}, W)$. Specifying the distribution by a weight rather than a specific homomorphism will occasionally be more convenient.

\subsection{Constructing Weights and Good Models}

We borrow the first result of this type from \cite{bowen2010}; it allows us to find a denominator-$n$ approximation to a given weight.

\begin{lemma}[Lemma 2.3 of \cite{bowen2010}]
\label{lem:bowenweightapprox}
	There is a constant $C$ such that for any $\A$-weight $W$ there is a denominator-$n$ $\A$-weight within distance $C \abs{\A}^2 r/n$ of $W$.
\end{lemma}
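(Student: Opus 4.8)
The plan is to first pin down a single integer ``vertex-count'' vector $\mathbf{d}=(\mathbf{d}_a)_{a\in\A}$ with $\sum_a\mathbf{d}_a=n$ that plays the role of $n$ times the vertex measure, and then, one generator at a time, produce a nonnegative integer $\A\times\A$ matrix with all row and column sums equal to $\mathbf{d}$ that is close to $nW(\cdot,\cdot;i)$; scaling by $1/n$ then gives a denominator-$n$ $\A$-weight, and tracking the errors yields the bound $C\abs{\A}^2 r/n$.

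For the first step, choose integers $\mathbf{d}_a\in\{\floor{nW(a)},\ \floor{nW(a)}+1\}$ with $\sum_{a\in\A}\mathbf{d}_a=n$; this is possible because $\sum_a nW(a)=n$, and it gives $\sum_{a\in\A}\abs{\mathbf{d}_a-nW(a)}\le 2\abs{\A}$. The weight $W'$ we construct will have $W'(a)=\mathbf{d}_a/n$, so for each $i\in[r]$ the matrix $N_i:=nW'(\cdot,\cdot;i)$ is required to be a nonnegative integer matrix with all row and column sums equal to $\mathbf{d}$; this forces $\sum_{a,a'}N_i(a,a')=\sum_a\mathbf{d}_a=n$, so the normalization is automatic, and the identity $\sum_{a'}W'(a,a';i)=\mathbf{d}_a/n=\sum_{a'}W'(a',a;j)$ makes $W'$ a genuine $\A$-weight.

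Fix $i$ and let $\mathcal{T}_{\mathbf{d}}$ be the transportation polytope of nonnegative $\A\times\A$ matrices with all row and column sums equal to $\mathbf{d}$. I will use two standard facts. First, $\mathcal{T}_{\mathbf{d}}$ is nonempty: it contains $\tfrac1n(\mathbf{d}_a\mathbf{d}_{a'})_{a,a'}$. Second, $\mathcal{T}_{\mathbf{d}}$ is cut out by a totally unimodular system with integer right-hand side, so for any $M\in\mathcal{T}_{\mathbf{d}}$ the further intersection with the integer box $\prod_{a,a'}[\floor{M(a,a')},\ \lceil M(a,a')\rceil]$ is a nonempty integral polytope; hence every real point of $\mathcal{T}_{\mathbf{d}}$ lies within entrywise distance $1$ --- so within $\ell^1$-distance $\abs{\A}^2$ --- of an integer point of $\mathcal{T}_{\mathbf{d}}$. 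Granting these, it suffices to exhibit one real point $M^{(i)}\in\mathcal{T}_{\mathbf{d}}$ with $\sum_{a,a'}\abs{M^{(i)}(a,a')-nW(a,a';i)}=O(\abs{\A})$; rounding it inside $\mathcal{T}_{\mathbf{d}}$ produces $N_i$ with $\sum_{a,a'}\abs{N_i(a,a')-nW(a,a';i)}=O(\abs{\A}^2)$. Assembling $W':=\tfrac1n(N_i)_{i\in[r]}$ and summing over $i$ gives $d(W,W')=\tfrac1{2n}\sum_i\sum_{a,a'}\abs{N_i(a,a')-nW(a,a';i)}\le C\abs{\A}^2 r/n$.

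The remaining, and I expect main, obstacle is the construction of $M^{(i)}$: the matrix $nW(\cdot,\cdot;i)$ already has both its row and its column margins equal to $(nW(a))_a$, which differs from $\mathbf{d}$ by only $O(\abs{\A})$ in $\ell^1$, but I must correct those margins to $\mathbf{d}$ exactly while keeping all entries nonnegative and spending only $O(\abs{\A})$. A convenient route is to pass first to $M':=nW(\cdot,\cdot;i)-\tfrac1n(nW(a)\,nW(a'))_{a,a'}+\tfrac1n(\mathbf{d}_a\mathbf{d}_{a'})_{a,a'}$, which has exact margins $\mathbf{d}$ and satisfies $\sum_{a,a'}\abs{M'(a,a')-nW(a,a';i)}\le 2\sum_a\abs{\mathbf{d}_a-nW(a)}=O(\abs{\A})$; its entries may be slightly negative, but only by $O(\abs{\A})$ in total, so a short flow argument --- zero the negative entries, then remove the $O(\abs{\A})$ of margin excess this creates --- projects $M'$ onto $\mathcal{T}_{\mathbf{d}}$ at an extra $\ell^1$-cost of $O(\abs{\A})$, yielding $M^{(i)}$. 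The integrality facts above are classical (e.g.\ via the usual cycle-cancelling argument), so the genuine work is this nonnegativity-preserving margin repair; one could alternatively run the whole argument as a single integer feasibility problem for the tuple $(N_i)_{i\in[r]}$ with box constraints, but the difficulty then resurfaces as the nonemptiness of that polytope.
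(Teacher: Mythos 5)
Your high-level plan is sound and, up to packaging, it is the standard argument: fix an integer vertex vector $\mathbf{d}$, for each generator produce a nonnegative real matrix in the transportation polytope $\mathcal{T}_{\mathbf{d}}$ close to $nW(\cdot,\cdot;i)$, and round to an integer point using total unimodularity. The TU rounding step is correctly stated. However, the step you yourself flag as ``the genuine work'' --- the nonnegativity-preserving margin repair --- does not work as described. After zeroing the negative entries of $M'$, the resulting nonnegative matrix $M''_+$ has margins $\mathbf{d}+e$ (rows) and $\mathbf{d}+f$ (columns); to ``remove the excess'' you would need a nonnegative $D\le M''_+$ with row sums $e$ and column sums $f$, and such a $D$ need not exist. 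Already with $\abs{\A}=2$, if $M''_+(1,1)=0$ while $e_1,f_1>0$ and $e_2<f_1$, then any candidate must put $D(1,1)=0$, hence $D(1,2)=e_1$, $D(2,1)=f_1$, $D(2,2)=e_2-f_1<0$. This situation genuinely arises from your rank-one correction: e.g.\ $\A=\{1,2\}$, $n=3$, $W(1)=W(2)=\tfrac12$, $\mathbf{d}=(1,2)$, and $W(\cdot,\cdot;i)$ supported on the anti-diagonal gives $M'(1,1)=-5/12$ and $M''_+(1,1)=0$ with exactly this excess pattern.

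The fix is to do the repair on $M'$ directly, not in two stages: while some $M'(a_-,a'_-)<0$, pick $a'_+$ with $M'(a_-,a'_+)>0$ (possible since the row sum is $\mathbf{d}_{a_-}\ge 0$) and $a_+$ with $M'(a_+,a'_-)>0$, and push mass $\delta$ around the $4$-cycle $(a_-,a'_-),(a_-,a'_+),(a_+,a'_+),(a_+,a'_-)$, increasing the first and third entries and decreasing the second and fourth. This preserves both margins, never creates a new negative entry, decreases total negative mass by at least $\delta$, and costs $\ell^1$-distance $4\delta$, so the projection onto $\mathcal{T}_{\mathbf{d}}$ costs at most $4$ times the negative mass, which is $O(\abs{\A})$. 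This $4$-cycle shift is precisely the repair used in this paper's proof of the harder Lemma~\ref{lem:denomnapprox} in Section~\ref{sec:denomnapproxproof}, and in Bowen's Lemma~2.3; both do the rounding directly in $\tfrac1n\ZZ$ (round $\floor{nW(a,a';i)}$ off a distinguished row and column, fill in that row and column to force the integer margins, then repair by $1/n$-sized $4$-cycle shifts), which makes termination of the repair immediate and avoids the TU machinery entirely. If you keep your real-then-integer ordering, you should also verify that the real-valued cycle-cancelling terminates (it requires a small additional argument) and that TU rounding cannot reintroduce negativity --- it cannot, since $\floor{\cdot}\ge 0$ on nonnegative entries, but this is worth saying.
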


The following lemma allows us not only to construct a denominator-$n$ approximation to a given weight, but also to specify a marginal of this approximation:
\begin{lemma}
\label{lem:denomnapprox}
	Let $W$ be an $\A \times \B$-weight. If $W_\B$ is a $\B$-weight of denominator $n$ with $d(W_\B, \pi_\B W) < \delta$ then there is an $\A \times \B$-weight $W_{\A\B}$ with denominator $n$ such that $\pi_\B W_{\A\B} = W_\B$ and $d(W_{\A\B}, W) < 265r ( \abs{\A \times \B}^2  / n + \delta)$.
\end{lemma}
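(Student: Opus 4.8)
The plan is to build $W_{\A\B}$ from $W$ in two stages. First I would fix the $\B$-part: since we are already given the denominator-$n$ $\B$-weight $W_\B$ with $d(W_\B, \pi_\B W) < \delta$, I want to produce an $\A \times \B$-weight $W'$ with $\pi_\B W' = W_\B$ and $d(W', W)$ small (order $\delta$), but $W'$ need not yet have denominator $n$. The natural construction is to ``redistribute mass within fibers'': for each $i \in [r]$ and each pair $(b,b')$, take the conditional law of $(a,a')$ given $(b,b')$ under the edge measure $W(\cdot,\cdot;i)$ and couple it against the target edge mass $W_\B(b,b';i)$. Concretely set
\[ W'\big((a,b),(a',b');i\big) = W\big((a,b),(a',b');i\big) \cdot \frac{W_\B(b,b';i)}{\pi_\B W(b,b';i)} \]
when $\pi_\B W(b,b';i)>0$, and spread $W_\B(b,b';i)$ arbitrarily (say uniformly in $a,a'$, or concentrated on a fixed symbol) over the $\A$-fiber when $\pi_\B W(b,b';i)=0$. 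One must check this $W'$ is an $\A\times\B$-weight, i.e. the consistency condition $\sum_{a',b'} W'((a,b),(a',b');i)$ is independent of $i$; this holds because after the rescaling the $(a,b)$-vertex mass of $W'$ equals $\sum_{b}$ (over the fiber) of the $\B$-vertex mass of $W_\B$ times the conditional $\A$-distribution, which is forced by the row sums of the conditional kernels to be $i$-independent — this is the one slightly delicate bookkeeping point. The total-variation cost $d(W',W)$ is controlled by $d(W_\B,\pi_\B W)$ up to an absolute constant, since changing the $\B$-marginal of each conditional measure by $\varepsilon_{b,b'}$ changes the joint edge measure by at most $\varepsilon_{b,b'}$ in TV after optimal coupling; summing over $i$ gives $d(W',W) \le c\,\delta$ for a small constant $c$ (I expect $c$ around $2$).

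Second, I would apply a denominator-$n$ rounding to $W'$ that preserves the $\B$-marginal. Lemma~\ref{lem:bowenweightapprox} gives a denominator-$n$ weight near any weight but says nothing about marginals, so I cannot cite it as a black box; instead I would run a rounding argument fiber-by-fiber. For each fixed $(b,b')$ and $i$, the edge masses $\{W'((a,b),(a',b');i)\}_{a,a'}$ sum to $W_\B(b,b';i)$, which is already a multiple of $1/n$. I want to round each $W'((a,b),(a',b');i)$ to a multiple of $1/n$ so that (i) the sum over $(a,a')$ is exactly $W_\B(b,b';i)$, and (ii) the resulting array is still a weight, i.e. the $i$-independence of vertex masses survives. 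Condition (i) alone is a standard ``rounding a vector of reals to a lattice while preserving the sum'' step costing $O(\abs{\A}^2/n)$ in $\ell^1$ per fiber. Condition (ii) is the real constraint coupling the different $i$'s and the different fibers together; here I would follow the structure of Bowen's proof of Lemma~\ref{lem:bowenweightapprox} but carry the extra linear constraint ``$\B$-marginal $= W_\B$'' through the same argument. The cleanest route: Bowen's lemma is proved by thinking of a weight as a flow / a point in a polytope with denominator-$n$ vertices, and rounding to a nearby vertex; imposing $\pi_\B(\cdot)=W_\B$ cuts out a face of that polytope which still has denominator-$n$ vertices (since $W_\B$ has denominator $n$), so the same rounding applies on the face. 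Tracking the constant through this gives the bound $d(W_{\A\B},W) < 265 r(\abs{\A\times\B}^2/n + \delta)$, the $265$ being whatever one gets from combining the constant $C$ of Lemma~\ref{lem:bowenweightapprox}, the $\A\times\B$ alphabet size, and the $c$ from the first stage; I would not optimize it.

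The main obstacle is the simultaneous preservation, under integer rounding, of both the prescribed $\B$-marginal and the weight consistency condition — these are linear constraints that are individually easy but jointly require either re-deriving Bowen's rounding lemma on a sub-polytope or an explicit greedy construction. I expect the stage-one construction and its TV estimate to be routine; the bulk of the write-up, and the source of the precise constant, will be verifying that the face of the weight polytope cut out by $\pi_\B W_{\A\B}=W_\B$ still has all vertices of denominator $n$ and lies within the claimed distance of $W'$. A fallback, if the polytope argument is awkward to make rigorous in the paper's framework, is to do the rounding by hand in a fixed order — round the edge measures $W'(\cdot,\cdot;1)$ first subject to matching $W_\B(\cdot,\cdot;1)$ and the (not-yet-fixed) vertex masses, then round $W'(\cdot,\cdot;2),\dots,W'(\cdot,\cdot;r)$ to match both their $\B$-marginals and the vertex masses already pinned down by $i=1$ — accumulating an $O(r\abs{\A\times\B}^2/n)$ error; this is less elegant but entirely elementary and still yields a bound of the stated form.
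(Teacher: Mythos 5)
Your Stage-1 rescaling does not in general produce an $\A\times\B$-weight, and the ``slightly delicate bookkeeping point'' you flag is exactly where it fails. Let $g_i(a\mid b,b')$ denote the conditional source-$\A$ distribution of the edge measure $W(\cdot,\cdot;i)$ given the $\B$-pair $(b,b')$. After rescaling, the source vertex mass of $W'$ at $(a,b)$ computed from generator $i$ is $\sum_{b'} W_\B(b,b';i)\,g_i(a\mid b,b')$. The weight condition on $W$ only forces the $\pi_\B W$-weighted averages $\sum_{b'} \pi_\B W(b,b';i)\,g_i(a\mid b,b') = W((a,b))$ to agree across $i$ (and with the sink side); it does \emph{not} force the kernels $g_i$ themselves to be $i$-independent. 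Once you replace the weights $\pi_\B W(b,b';i)$ by $W_\B(b,b';i)$, the averages over different $i$ can disagree, so $W'$ need not satisfy the weight consistency condition, and the two-stage plan stalls before the rounding step. Your Stage-2 polytope argument also has the gap you yourself flag: the face cut out by $\pi_\B = W_\B$ is not obviously a denominator-$n$ polytope, so Lemma~\ref{lem:bowenweightapprox} cannot simply be invoked on the face.

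The paper's proof takes the route closest to your fallback but orders the rounding by dimension rather than by generator index: it builds $W_{\A\B}$ directly by a three-stage floor-and-fill scheme, first pinning down the vertex measure (round down $W((a,b))$ for $a\neq a_0$, choose the $a_0$ entries so that $\sum_a W_{\A\B}((a,b)) = W_\B(b)$, then repair negativity by moving $1/n$-units among entries with the same $b$), then the $\B$ half-marginal $W_{\A\B}(b,(a',b');i)$ (floors, fill to hit both the already-fixed vertex measure and the edges $W_\B(\cdot,\cdot;i)$, repair), then the full edge measure (same pattern). Because each stage only has to match already-fixed denominator-$n$ row and column sums, feasibility is immediate, and the $\ell^1$ error accumulates explicitly across the three stages and the negativity repairs, which is the source of the $265r$. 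Your $i$-by-$i$ fallback would still owe a feasibility argument at each $i\geq 2$: you need to know that the transportation-type problem with prescribed $\B$-marginal $W_\B(\cdot,\cdot;i)$ and prescribed $(\A\times\B)$-vertex masses inherited from $i=1$ admits a denominator-$n$ solution near $W(\cdot,\cdot;i)$; that is exactly the role the paper's half-marginal step plays.
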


The construction is fairly involved, so is postponed to Section \ref{sec:denomnapproxproof}. The constant 265 is not intended to be optimal.

The definition of a weight $W_{\sigma, \mb{x}^k}$ in terms of a homomorphism $\sigma$ and a labeling $\mb{x}$ is straightforward. However, we will also need to know whether a given weight can be realized in this way. The next two results address this inverse problem.

\begin{prop}
\label{prop:weightinverse}
	If $W$ is a denominator-$n$ $\A$-weight, then there exist $\mb{x} \in \A^n$ and $\sigma \in \Hom(G, \Sym(n))$ such that $W = W_{\sigma, \mb{x}}$.
\end{prop}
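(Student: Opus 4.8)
The plan is to realize $W$ by building a homomorphism $\sigma$ one generator at a time, where for each $i \in [r]$ we independently choose the permutation $\sigma(s_i)$ so as to match the edge measure $W(\cdot,\cdot;i)$. First I would fix the vertex measure: since $W$ has denominator $n$, the common value $W(a)$ satisfies $n W(a) \in \NN$ for each $a$ (it is a marginal sum of denominator-$n$ numbers) and $\sum_a W(a) = 1$, so I can partition $[n]$ into blocks $V_a$ with $\abs{V_a} = n W(a)$ and define the labeling $\mb{x}$ by $\mb{x}_j = a$ for $j \in V_a$. This immediately gives $W_{\sigma,\mb{x}}(a) = W(a)$ for whatever $\sigma$ we pick, so it only remains to choose each $\sigma(s_i) \in \Sym(n)$ so that $P^{\sigma,\{e,s_i\}}_{\mb{x}}(a,a') = W(a,a';i)$.

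For a fixed $i$, the weight condition says $\sum_{a'} W(a,a';i) = W(a) = \sum_{a'} W(a',a;i)$, so the denominator-$n$ matrix $M(a,a') \coloneqq n W(a,a';i)$ is a nonnegative integer matrix whose $a$-th row sums to $\abs{V_a}$ and whose $a$-th column also sums to $\abs{V_a}$. I would then define $\sigma(s_i)$ as a bijection $[n] \to [n]$ that, for each ordered pair $(a,a')$, maps exactly $M(a,a')$ elements of $V_a$ into $V_{a'}$: concretely, within each $V_a$ reserve disjoint sub-blocks of sizes $M(a,a')$ (these sizes sum to $\abs{V_a}$ by the row-sum condition), within each $V_{a'}$ reserve disjoint sub-blocks of sizes $M(a,a')$ indexed now by the source $a$ (these sum to $\abs{V_{a'}}$ by the column-sum condition), and let $\sigma(s_i)$ be any bijection carrying the $(a,a')$ sub-block of $V_a$ onto the $(a,a')$ sub-block of $V_{a'}$. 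This is a well-defined permutation because both the domain sub-blocks and the range sub-blocks partition $[n]$, and by construction $\abs{\{ j : \mb{x}_j = a,\ \mb{x}_{\sigma(s_i)j} = a'\}} = M(a,a') = n W(a,a';i)$, i.e. $W_{\sigma,\mb{x}}(a,a';i) = W(a,a';i)$. Doing this independently for each $i \in [r]$ and using that $G$ is free (so $s_i \mapsto \sigma(s_i)$ extends uniquely to a homomorphism $\sigma \colon G \to \Sym(n)$ with no relations to check) completes the construction.

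The only genuine subtlety is the bookkeeping that makes the two families of sub-blocks inside each $V_a$ simultaneously consistent — the sub-blocks of $V_a$ used as \emph{sources} (sizes $M(a,a')$ over $a'$) and the sub-blocks of $V_a$ used as \emph{targets} (sizes $M(a'',a)$ over $a''$) are two different partitions of the same set $V_a$, which is fine because we only need $\sigma(s_i)$ to respect each partition separately, not both at once. Concretely one lists $[n] = \bigsqcup_a V_a$, refines $V_a$ by source type for the domain side and (independently) by target type for the range side, and picks the order-preserving bijection between corresponding sub-blocks; I expect this indexing argument, rather than any real mathematical difficulty, to be the main thing to write carefully. (This proposition is essentially Bowen's; I would cite the analogous statement in \cite{bowen2010} if a reference is preferred to a self-contained argument.)
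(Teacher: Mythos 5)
Your construction is correct, and it is a legitimate self-contained alternative to what the paper actually does, which is simply to cite Proposition 2.1 of \cite{bowen2010}. That cited result is an exact counting formula,
\[
\EE_\sigma \abs{\{ \mb{y} \in \A^n \st W_{\sigma, \mb{y}} = W \}} = \frac{n!^{1-r} \prod_{a} (n W(a))!^{2r-1}}{\prod_{i} \prod_{a,a'} (n W(a,a'; i))!},
\]
and since the right-hand side is a strictly positive ratio of factorials whenever $W$ has denominator $n$, existence follows immediately. Your argument instead exhibits a realizing pair $(\sigma,\mb{x})$ directly: it correctly reduces the problem to the observation that for each $i$ the integer matrix $M(a,a')=nW(a,a';i)$ has row and column sums $\abs{V_a}$ (this is exactly the weight consistency condition), so one can match source sub-blocks of $V_a$ to target sub-blocks of $V_{a'}$ bijectively, generator by generator, and then invoke freeness of $G$ to extend. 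The only point worth making crisper in a final write-up is the one you already flag: the source partition and target partition of a given $V_a$ are unrelated, and that is fine because one is a partition of $\sigma(s_i)$'s domain and the other of its codomain. The direct construction buys self-containedness and makes the role of the weight consistency condition transparent; the citation buys brevity and also delivers the quantitative count, which the paper later needs anyway (Proposition \ref{prop:Zbounds}). Either is acceptable here.
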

\begin{proof}
	This is implied by Proposition 2.1 of \cite{bowen2010}.
\end{proof}

Unfortunately, this does not imply that for every denominator-$n$ $\A^{\ball{e}{k}}$-weight $W$ there is some $\sigma \in \Hom(G, \Sym(n))$ and $\mb{x} \in \A^n$ such that $W = W_{\sigma, \mb{x}^k}$; instead it provides $\mb{X} \in (\A^{\ball{e}{k}})^n$ such that $W = W_{\sigma, \mb{X}}$.

However, if we already know that $W$ is close to a weight of the form $W_{\alpha^k}$ for some observable $\alpha$, then the following proposition shows that $W$ is also close to a weight of the form $W_{\sigma, \mb{x}^k}$.

\begin{prop}
\label{prop:weightballapprox}
	Let $\alpha \colon X \to \A$, $\sigma \in \Hom(G, \Sym(n))$, and $\mb{X} \in (\A^{\ball{e}{k}})^n$ be such that $d(W_{\sigma, \mb{X}}, W_{\alpha^k}) \leq \varepsilon$ for some $\varepsilon \geq 0$. Writing $\mb{x} = \pi_e \mb{X} \in \A^n$, we have
		\[ d(W_{\sigma, \mb{X}}, W_{\sigma, \mb{x}^k}) \leq 2r \abs{\ball{e}{k}} \varepsilon . \]
\end{prop}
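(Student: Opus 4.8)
The plan is to compare the two weights edge-by-edge and bound their total variation distance in terms of how often the coordinates of $\mb{X}$ fail to be "internally consistent" — that is, how often $\mb{X}$ disagrees with the genuine $k$-ball itinerary $(\pi_e\mb{X})^k$ determined by $\sigma$. Write $\mb{x} = \pi_e \mb{X}$ and let $\mb{x}^k \in (\A^{\ball{e}{k}})^n$ denote the labeling whose $j$-th coordinate is $\big( \mb{x}_{\sigma(g)j} \big)_{g \in \ball{e}{k}}$. Define the "bad set" $B = \{ j \in [n] \st \mb{X}_j \neq (\mb{x}^k)_j \}$. The first step is to observe that if $j \notin B$ and also $\sigma(s_i) j \notin B$, then the pair $\big( \mb{X}_j, \mb{X}_{\sigma(s_i)j} \big)$ agrees with $\big( (\mb{x}^k)_j, (\mb{x}^k)_{\sigma(s_i)j} \big)$, so such $j$ contribute identically to $W_{\sigma,\mb{X}}(\cdot,\cdot;i)$ and $W_{\sigma,\mb{x}^k}(\cdot,\cdot;i)$. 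Hence for each $i$,
\[ \norm{ W_{\sigma,\mb{X}}(\cdot,\cdot;i) - W_{\sigma,\mb{x}^k}(\cdot,\cdot;i) }_{\TV} \leq \frac{1}{n} \abs{ \{ j \in [n] \st j \in B \text{ or } \sigma(s_i)j \in B \} } \leq \frac{2 \abs{B}}{n}, \]
using that $\sigma(s_i)$ is a bijection. Summing over $i \in [r]$ gives $d(W_{\sigma,\mb{X}}, W_{\sigma,\mb{x}^k}) \leq 2r\abs{B}/n$.

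The second step is to bound $\abs{B}/n$ by $\abs{\ball{e}{k}}\,\varepsilon$. Here is where the hypothesis $d(W_{\sigma,\mb{X}}, W_{\alpha^k}) \leq \varepsilon$ enters. The key point is that a genuine itinerary weight $W_{\alpha^k}$ is supported on $k$-ball colorings that are consistent in the sense used in the proof of Proposition \ref{prop:dstar}: whenever $g, g s_i^{-1} \in \ball{e}{k}$ the value at $g$ of the $s_i$-shift equals the value at $gs_i^{-1}$ of the base. Concretely, for each $i \in [r]$ and each $g \in \ball{e}{k} \cap \ball{s_i}{k}$, the edge measure $W_{\alpha^k}(\cdot,\cdot;i)$ assigns zero mass to pairs $(\mb{a},\mb{a}')$ with $\mb{a}'_{gs_i^{-1}} \neq \mb{a}_g$. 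Therefore the $W_{\sigma,\mb{X}}$-mass of the set of such "inconsistent" pairs is at most $d(W_{\sigma,\mb{X}}, W_{\alpha^k}) \leq \varepsilon$ for each individual constraint; more carefully, for each fixed $i$ and $g$,
\[ \frac{1}{n}\abs{ \{ j \in [n] \st (\mb{X}_j)_{gs_i^{-1}} \neq (\mb{X}_{\sigma(s_i)j})_g \} } = \sum_{\substack{\mb{a},\mb{a}' : \\ \mb{a}'_{gs_i^{-1}} \neq \mb{a}_g}} W_{\sigma,\mb{X}}(\mb{a},\mb{a}';i) \leq 2\norm{W_{\sigma,\mb{X}}(\cdot,\cdot;i) - W_{\alpha^k}(\cdot,\cdot;i)}_{\TV} \leq 2\varepsilon. \]

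The third step is to convert these local inconsistency bounds into a bound on $\abs{B}$. The observation is that if $j \in B$, i.e. $\mb{X}_j \neq (\mb{x}^k)_j = \big(\mb{x}_{\sigma(g)j}\big)_{g\in\ball{e}{k}}$, then there is some $g \in \ball{e}{k}$ with $(\mb{X}_j)_g \neq \mb{x}_{\sigma(g)j} = (\mb{X}_{\sigma(g)j})_e$. Write $g = s_{i_1}^{\pm 1}\cdots s_{i_\ell}^{\pm 1}$ as a reduced word of length $\ell \le k$; walking along this word and using the triangle inequality (a value propagated step by step along $\sigma$ stays consistent unless some one-step edge constraint is violated at some vertex along the orbit path from $j$ to $\sigma(g)j$), we conclude that $j$ being bad forces at least one one-step inconsistency at some vertex on the path. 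Counting: each of the (at most $\abs{\ball{e}{k}}$) prefixes $g'$ of elements $g \in \ball{e}{k}$ gives a constraint, and each vertex violating the $i$-th constraint at displacement $g'$ is "blamed" — since $\sigma(g')$ is a bijection, the preimages under $\sigma(g')$ of violating vertices number at most $2\varepsilon n$ each. Summing over the at most $\abs{\ball{e}{k}}$ relevant $(i, g')$ pairs gives $\abs{B} \le \abs{\ball{e}{k}}\,\varepsilon\, n$ — the constants can be arranged to land inside the claimed $2r\abs{\ball{e}{k}}\varepsilon$ once combined with the factor $2r$ from step one. (Some care with the exact combinatorial accounting is needed to get precisely the stated constant, but the order of magnitude $r\abs{\ball{e}{k}}\varepsilon$ is forced.)

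The main obstacle is the bookkeeping in step three: correctly formalizing the "propagate along a word, blame a vertex on the orbit path" argument so that each bad $j \in B$ is charged to a unique or boundedly-many one-step violations, and ensuring the multiplicities, when summed against the per-constraint bound $2\varepsilon$, combine with the $2r$ of step one to give exactly $2r\abs{\ball{e}{k}}\varepsilon$ rather than a larger constant. Everything else is routine total-variation manipulation plus the consistency structure of itinerary weights already exploited in Proposition \ref{prop:dstar}.
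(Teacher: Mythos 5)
Your Step 1 is correct and is exactly what the paper does: bound each $\norm{ W_{\sigma,\mb{X}}(\cdot,\cdot;i) - W_{\sigma,\mb{x}^k}(\cdot,\cdot;i) }_{\TV}$ by $\tfrac{2\abs{B}}{n}$ via the coupling argument and bijectivity of $\sigma(s_i)$, giving $d(W_{\sigma,\mb{X}}, W_{\sigma,\mb{x}^k}) \leq 2r\abs{B}/n$. The difference is in how the bound $\abs{B} \leq n\abs{\ball{e}{k}}\varepsilon$ is obtained. The paper simply cites Claim 4 in the proof of Proposition 3.2 of \cite{bowen2010} for this estimate and stops; you instead attempt to re-derive it from the edge-consistency structure of $W_{\alpha^k}$ (your Steps 2--3).

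There is a genuine gap in Steps 2--3, and you acknowledge it yourself. The charging scheme in Step 3 --- ``blame each $j \in B$ on a one-step violation at some vertex $\sigma(g')j$ along the orbit path'' --- is not pinned down: you need to specify, for each $j \in B$, a well-defined pair (prefix $g'$, generator index $i$) such that the $i$-edge consistency fails at the corresponding vertex, and then control how many $j$'s get charged to the same $(g',i)$. Your sketch says ``the preimages under $\sigma(g')$ of violating vertices number at most $2\varepsilon n$ each,'' and that summing over $\abs{\ball{e}{k}}$ pairs $(i,g')$ gives $\abs{B} \leq \abs{\ball{e}{k}}\varepsilon n$, but $\abs{\ball{e}{k}} \times 2\varepsilon n = 2\abs{\ball{e}{k}}\varepsilon n$, not $\abs{\ball{e}{k}}\varepsilon n$; the arithmetic as written doesn't produce the claimed estimate. (Separately, the factor of $2$ in Step 2 is unnecessary: with the paper's convention $\norm{\mu-\nu}_{\TV} = \sup_A\abs{\mu(A)-\nu(A)}$, the $W_{\sigma,\mb{X}}$-mass of any $W_{\alpha^k}$-null set is at most $\norm{W_{\sigma,\mb{X}}(\cdot,\cdot;i) - W_{\alpha^k}(\cdot,\cdot;i)}_{\TV}$, and moreover it is the sum over $i$ of these norms that is bounded by $\varepsilon$, not each one individually.) To make this route work you would need to carefully set up a charging map, peeling one generator at a time off the right of $g$ and tracking which of the two endpoints of each $s_i$-edge the inconsistency is attributed to, so that distinct $j$'s charged to the same displacement land at distinct vertices; then sum the per-generator inconsistency counts $\abs{E_i}$ (not a uniform $2\varepsilon n$) against the multiplicity of each generator among the relevant prefixes. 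Until that accounting is done, the key estimate $\abs{B} \leq n\abs{\ball{e}{k}}\varepsilon$ is asserted rather than proved, which is exactly what the paper's citation to Bowen supplies.
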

An immediate consequence is that $\mb{X} \in \Omega_0^*(\sigma, \alpha^k, \varepsilon)$ implies $\pi_e \mb{X} \in \Omega_k^*(\sigma, \alpha, c \varepsilon)$ where $c = 1 + 2r \abs{\ball{e}{k}}$; cf. Claim 2 in the proof of Proposition 3.2 of \cite{bowen2010}.

\begin{proof}
	Claim 4 in the proof of Proposition 3.2 of \cite{bowen2010} implies that
		\[ \abs{\{ j \in [n] \st \mb{X}(j) \ne \mb{x}^k(j) \}} \leq n \abs{\ball{e}{k}} \varepsilon . \]
	It follows that for any $i \in [r]$
	\begin{align*}
		&\hspace{-2em} \abs{\{ j \in [n] \st \mb{X}^{\{e, s_i\}}(j) \ne (\mb{x}^k)^{\{e,s_i\}}(j) \}} \\
		&\leq \abs{\{ j \in [n] \st \mb{X}(j) \ne \mb{x}^k(j) \}} + \abs{\{ j \in [n] \st \mb{X}(\sigma(s_i)j) \ne \mb{x}^k(\sigma(s_i)j) \}} \\
		&\leq 2 n \abs{\ball{e}{k}} \varepsilon ,
	\end{align*}
	so
	\begin{align*}
		d(W_{\sigma, \mb{X}}, W_{\sigma, \mb{x}^k}) &= \sum_{i \in [r]} \norm*{ \left(\mb{X}^{\{e, s_i\}}\right)_* \Unif(n) - \left((\mb{x}^k)^{\{e, s_i\}}\right)_* \Unif(n)}_{\TV} \\
		&\leq \sum_{i \in [r]} 2 \abs{\ball{e}{k}} \varepsilon = 2 r \abs{\ball{e}{k}} \varepsilon . \qedhere
	\end{align*}
\end{proof}

\section{Properties of $F$ and $f$}
\label{sec:Fandf}

\begin{lemma}[Continuity as weight function]
\label{lem:Fcontinuity}
	If $W_1, W_2$ are $\A$-weights with $d(W_1, W_2) \leq \varepsilon 
	\leq 1$ then
		\[ \abs{F(W_1) - F(W_2)} \leq 4r \big( \shent(\varepsilon) + \varepsilon \log_2 \abs{\A} \big) . \]
	where $\shent(p)$ denotes the entropy of the probability measure $(p, 1-p) \in \Prob(\{0,1\})$.
\end{lemma}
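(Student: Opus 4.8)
The plan is to reduce everything to the uniform continuity of Shannon entropy on a fixed finite set. We will use the standard estimate: if $P,Q \in \Prob(\Sigma)$ with $\abs{\Sigma} = m$ and $d = \norm{P - Q}_{\TV}$, then $\abs{\shent(P) - \shent(Q)} \le \shent(d) + d\log_2 m$ (take a maximal coupling $(U,V)$ of $P$ and $Q$, so $\PP(U \ne V) = d$, and bound $\shent(P) - \shent(Q) \le \shent(U \mid V) \le \shent(\1{U \ne V} \mid V) + \shent(U \mid \1{U\ne V}, V) \le \shent(d) + d\log_2 m$, then use symmetry). If $\abs{\A} = 1$ the claim is trivial since $F \equiv 0$, so assume $\abs{\A} \ge 2$ from now on.

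Write $\delta_i = \norm{W_1(\cdot,\cdot;i) - W_2(\cdot,\cdot;i)}_{\TV}$ for $i \in [r]$ and $\delta_0 = \norm{W_1(\cdot) - W_2(\cdot)}_{\TV}$. By hypothesis $\sum_{i \in [r]} \delta_i = d(W_1, W_2) \le \varepsilon$, and since the vertex measure of a weight is a marginal of each of its edge measures (and marginalization does not increase total variation distance), $\delta_0 \le \delta_i$ for every $i$, so $\delta_0 \le \tfrac1r \sum_i \delta_i \le \varepsilon / r$. By the definition of $F$, the triangle inequality, and the continuity estimate applied with $m = \abs{\A}$ on the vertex term and $m = \abs{\A}^2$ on the edge terms,
\[
\abs{F(W_1) - F(W_2)} \le (2r-1)\big(\shent(\delta_0) + \delta_0 \log_2 \abs{\A}\big) + \sum_{i \in [r]} \big(\shent(\delta_i) + 2\delta_i \log_2 \abs{\A}\big).
\]

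It remains to bound the right-hand side by $4r\big(\shent(\varepsilon) + \varepsilon \log_2 \abs{\A}\big)$. The terms carrying $\log_2\abs{\A}$ contribute $\big((2r-1)\delta_0 + 2\sum_i \delta_i\big)\log_2\abs{\A} \le \big(\tfrac{2r-1}{r} + 2\big)\varepsilon \log_2\abs{\A} \le 4\varepsilon\log_2\abs{\A} \le 4r\varepsilon\log_2\abs{\A}$, within budget for any $\varepsilon$. For the binary-entropy terms I split on $\varepsilon$. If $\varepsilon \le \tfrac12$ then $\delta_0, \delta_1, \dots, \delta_r \le \varepsilon \le \tfrac12$ lie in the range where $p \mapsto \shent(p)$ is nondecreasing, so $(2r-1)\shent(\delta_0) + \sum_i \shent(\delta_i) \le (3r-1)\shent(\varepsilon) \le 4r\shent(\varepsilon)$, and adding this to the estimate on the $\log_2\abs{\A}$ terms closes the case. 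If $\varepsilon > \tfrac12$ then $\shent(\varepsilon)$ may be arbitrarily small and the term-by-term estimate is too weak; here I instead discard it and use the a priori bound $\abs{F(W_1) - F(W_2)} \le (2r-1)\log_2\abs{\A}$, which follows by rewriting $F(W) = (1-r)\shent(W(\cdot)) + \sum_i\big(\shent(W(\cdot,\cdot;i)) - \shent(W(\cdot))\big)$ and using $\shent(W(\cdot)) \in [0, \log_2\abs{\A}]$ together with the fact that each conditional entropy $\shent(W(\cdot,\cdot;i)) - \shent(W(\cdot))$ lies in $[0, \log_2\abs{\A}]$. Since $\varepsilon > \tfrac12$ and $\abs{\A} \ge 2$, this yields $(2r-1)\log_2\abs{\A} \le 2r\log_2\abs{\A} \le 4r\varepsilon\log_2\abs{\A} \le 4r\big(\shent(\varepsilon) + \varepsilon\log_2\abs{\A}\big)$.

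The only genuinely delicate point is the regime where $\varepsilon$ is close to $1$: there the naive continuity estimate leaves binary-entropy terms of total order $\Theta(r)$ while the target quantity $4r\shent(\varepsilon)$ degenerates to $0$, so a separate crude bound (the a priori estimate above) is unavoidable. Everything else is bookkeeping; the point worth flagging is that the edge-measure distances $\delta_i$ sum to at most $\varepsilon$ rather than each being at most $\varepsilon$, and it is this observation (together with $\delta_0 \le \varepsilon/r$) that keeps the overall constant equal to $4r$.
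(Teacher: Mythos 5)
Your proof is correct and uses the same core estimate as the paper: the Fano/maximal-coupling bound $\abs{\shent(P)-\shent(Q)} \le \shent(d) + d\log_2 m$ applied to each vertex and edge measure. But you are noticeably more careful in two places. First, you track the individual total-variation distances and use both that $\sum_i \delta_i \le \varepsilon$ (not merely $\delta_i \le \varepsilon$ for each $i$) and that $\delta_0 \le \delta_i$, hence $\delta_0 \le \varepsilon/r$; the paper simply replaces every $\delta$ by $\varepsilon$. Second, and more substantively, you split on $\varepsilon \le \tfrac12$ versus $\varepsilon > \tfrac12$ and dispatch the latter with the a priori bound $\abs{F(W_1) - F(W_2)} \le (2r-1)\log_2\abs{\A}$. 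The paper's proof glides over the fact that $p \mapsto \shent(p) + p\log_2 m$ is not monotone on all of $[0,1]$ — it peaks at $p = m/(m+1)$ and decreases thereafter — so the step from $\shent(\delta) + \delta\log_2 m$ to $\shent(\varepsilon) + \varepsilon\log_2 m$ is not automatically valid when $\varepsilon$ exceeds $m/(m+1)$ (which is at least $\tfrac23$). Your case split removes that gap and is the cleaner way to get the stated inequality for all $\varepsilon \le 1$. The a priori bound you use, via $F(W) = (1-r)\shent(W(\cdot)) + \sum_i\big(\shent(W(\cdot,\cdot;i)) - \shent(W(\cdot))\big)$ with each conditional-entropy summand in $[0,\log_2\abs{\A}]$, is correct because both marginals of each edge measure equal the vertex measure (this is exactly the defining weight compatibility condition). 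In short: same strategy as the paper, with tighter bookkeeping that both sharpens the constant-tracking and repairs a boundary case the paper does not address.
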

\begin{proof}
	We use Fano's inequality in the following form (Equation (2.139) of \cite{cover2006a}): suppose $X,Y$ are $\A$-valued random variables defined on the same probability space and let $p_e = \PP(X \ne Y)$ be their probability of disagreement. Then
		\[ \shent(X \mid Y) \leq \shent(p_e) + p_e \log \abs{\A} . \]
	Using the chain rule and nonnegativity of Shannon entropy, we can deduce that
		\[ \abs{ \shent(X) - \shent(Y)} \leq \shent(p_e) + p_e \log \abs{\A}. \]
	Let $\mu_1, \mu_2 \in \Prob(\A)$ be the respective distributions of $X_1,X_2$. Because $\norm{\mu_1 - \mu_2}_{\TV}$ is the minimum value of $\PP(X \ne Y)$ over all possible couplings, if $\norm{\mu_1 - \mu_2}_{\TV} < \varepsilon$ then
		\[ \abs{\shent(\mu_1) - \shent(\mu_2)} \leq \shent(\varepsilon) + \varepsilon \log_2 \abs{\A} . \]
	
	The assumed bound $d(W_1, W_2) \leq \varepsilon$ implies that each vertex and edge measure of $W_1$ is within total variation distance $\varepsilon$ of its counterpart in $W_2$, so
	\begin{align*}
		\abs{F(W_1) - F(W_2)} &\leq \abs{1-2r} \cdot \abs*{\shent \big( W_1(\cdot) \big) - \shent \big( W_2(\cdot) \big)} \\
		&\qquad + \sum_{i \in [r]} \abs*{\shent \big( W_1(\cdot, \cdot; i) \big) - \shent \big( W_2(\cdot, \cdot; i) \big)} \\
		&\leq (2r-1) \left( \shent(\varepsilon) + \varepsilon \log_2 \abs{\A} \right) \\
		&\qquad + r \cdot \left( \shent(\varepsilon) + \varepsilon \log_2 \abs{\A}^2 \right) \\
		&\leq 4r \big( \shent(\varepsilon) + \varepsilon \log_2 \abs{\A} \big). \qedhere
	\end{align*}
\end{proof}

Let $\alpha \colon X \to \A$ and $\beta \colon X \to \B$ be observables. We say that $\beta$ is a \emph{coarsening} of $\alpha$ if each part of the partition of $X$ induced by $\beta$ is a union of parts of the partition induced by $\alpha$ (up to null sets). Equivalently, there is some function $g \colon \A \to \B$ such that $\beta = g \circ \alpha$ almost surely. In this situation we can also call $\alpha$ a refinement of $\beta$.

A useful property of the Shannon entropy $\shent_\mu(\alpha)$ is monotonicity under refinement. The function $F$ does not share this property, but it is monotone under the following particular kind of refinement introduced in \cite{bowen2010a}:

We say that $\beta$ is a \emph{simple splitting} of $\alpha$ if there is some $s \in \{s_1^{\pm1}, \ldots, s_r^{\pm1}\}$ and a coarsening $\tilde{\alpha}$ of $\alpha$ such that, up to null sets, the partition induced by $\beta$ is the coarsest common refinement of the partitions induced by $\alpha$ and $\tilde{\alpha} \circ T_s$.

We say that $\beta$ is a \emph{splitting} of $\alpha$ if there are observables $\alpha = \beta_0, \beta_1, \ldots, \beta_n = \beta$ such that $\beta_i$ is a simple splitting of $\beta_{i-1}$ for $i = 1, 2, \ldots, n$. We will use the following monotonicity properties of the relative version of $F$:

\begin{lemma}[Monotonicity under splitting]
\label{lem:splittings}
	\ 
	\begin{enumerate}
		\item If $\alpha_1$ is a splitting of $\alpha_2$ then $F(\alpha_1 | \beta) \leq F(\alpha_2 | \beta)$.
		\item If $\beta_1$ is a splitting of $\beta_2$ then $F(\alpha | \beta_1) \geq F(\alpha | \beta_2)$.
	\end{enumerate}
\end{lemma}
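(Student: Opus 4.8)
The plan is to reduce both parts to the single-simple-splitting case and then verify that case by a direct entropy computation. By definition of a splitting as a finite chain of simple splittings, it suffices to prove: (1') if $\alpha_1$ is a \emph{simple} splitting of $\alpha_2$ then $F(\alpha_1|\beta) \leq F(\alpha_2|\beta)$, and (2') if $\beta_1$ is a simple splitting of $\beta_2$ then $F(\alpha|\beta_1) \geq F(\alpha|\beta_2)$; the general statements follow by induction along the chain $\alpha = \beta_0, \beta_1, \ldots, \beta_n = \beta$. So fix $s \in \{s_1^{\pm1},\ldots,s_r^{\pm1}\}$ witnessing the simple splitting, together with the coarsening $\tilde\alpha = g\circ\alpha$ (resp.\ $\tilde\beta$), so that the splitting observable is (up to null sets) $\alpha_1 = \alpha_2 \cdot (\tilde\alpha_2\circ T_s)$.

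For part (1') I would expand $F(\alpha_1|\beta) - F(\alpha_2|\beta) = F(\alpha_1\beta) - F(\alpha_2\beta)$ using the definition
\[
F_\mu(T,\gamma) = (1-2r)\shent_\mu(\gamma) + \sum_{i=1}^r \shent_\mu(\gamma^{\{e,s_i\}}),
\]
applied to $\gamma = \alpha_1\beta$ and $\gamma = \alpha_2\beta$. Since $\alpha_1\beta$ differs from $\alpha_2\beta$ only by adjoining the coordinate $\tilde\alpha_2(T_s\,\cdot)$, I can write $\shent_\mu(\alpha_1\beta) = \shent_\mu(\alpha_2\beta) + \shent_\mu\big(\tilde\alpha_2\circ T_s \mid \alpha_2\beta\big)$, and similarly for each term $\shent_\mu((\alpha_1\beta)^{\{e,s_i\}})$. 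The key point is $T$-invariance of $\mu$: for the generator $s_j$ with $s = s_j^{\pm1}$, the pair observable $(\alpha_2\beta)^{\{e,s_j\}}$ already ``sees'' the coordinate at $T_s$, so adjoining $\tilde\alpha_2\circ T_s$ is redundant there modulo shifting the base point — this is exactly where the term $\shent_\mu(\tilde\alpha_2\circ T_s\mid\alpha_2\beta)$ gets cancelled by a contribution with the opposite sign coming from the coefficient $(1-2r)$ versus the single matched generator term. Collecting terms, the difference $F(\alpha_1\beta)-F(\alpha_2\beta)$ should reduce to a nonpositive combination of conditional entropies: roughly, a sum of (conditional entropy $-$ conditional entropy of a coarsening of the conditioning) terms, each $\leq 0$ by monotonicity of conditional Shannon entropy under refining the conditioning variable. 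Part (2') is the mirror image: $F(\alpha|\beta_1) - F(\alpha|\beta_2) = [F(\alpha\beta_1) - F(\beta_1)] - [F(\alpha\beta_2) - F(\beta_2)]$, and the same expansion, now tracking the extra coordinate added to $\beta$, yields the reversed inequality because $\beta$ appears in the subtracted term $F(\beta)$ as well, flipping the sign.

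The main obstacle — and the step deserving the most care — is the sign bookkeeping in the cancellation: one must check that the single edge-measure term $\shent_\mu((\alpha_2\beta)^{\{e,s_j\}})$ corresponding to the distinguished generator $s_j$ exactly absorbs the surplus conditional-entropy term produced by the vertex term's coefficient $(1-2r)$, leaving only manifestly-signed leftovers. This is really the content of Bowen's original splitting lemma in \cite{bowen2010a}, and I expect the cleanest route is to mimic that argument verbatim at the level of weights: write everything in terms of $W_{\alpha_1\beta}$ and $W_{\alpha_2\beta}$, note that $\pi W_{\alpha_1\beta} = W_{\alpha_2\beta}$ under the obvious coarsening $\pi$ that forgets the new coordinate, and use that $F$ decomposes as a difference of an entropy of the vertex measure and entropies of edge measures, each of which changes monotonically in a controlled direction under this particular $\pi$. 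The relative statement then requires no new idea beyond carrying the $\beta$-coordinate along inertly and observing that in case (2') it additionally appears in the $-F(\beta)$ term. I would also remark that this lemma justifies the claim, made just before it in the text, that the infimum over $k_1$ and supremum over $k_2$ in the definition of $f_\mu(T,\alpha\mid\beta)$ may be replaced by limits, since $\alpha^{k_1+1}$ is a splitting of $\alpha^{k_1}$ (and likewise for $\beta$).
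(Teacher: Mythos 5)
Your reduction to the simple-splitting case and the treatment of Part 1 are in line with the paper: the paper simply invokes Bowen's Proposition 5.1 from \cite{bowen2010a} with the remark that ``conditioning on $\beta$ makes no difference,'' which matches your observation that $F(\alpha_1|\beta)-F(\alpha_2|\beta)=F(\alpha_1\beta)-F(\alpha_2\beta)$ and that $\alpha_1\beta$ is itself a simple splitting of $\alpha_2\beta$ (with the same generator and the same coarsening $\tilde\alpha_2$, which is also a coarsening of $\alpha_2\beta$). That is correct and is essentially the paper's route.

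Part 2 is where there is a genuine gap. Your proposed mechanism is that the subtraction of $F(\beta)$ ``flips the sign,'' but this does not follow. Writing
\[
F(\alpha|\beta_1)-F(\alpha|\beta_2)=\bigl[F(\alpha\beta_1)-F(\alpha\beta_2)\bigr]-\bigl[F(\beta_1)-F(\beta_2)\bigr],
\]
the unconditional splitting lemma tells you that \emph{both} bracketed quantities are $\leq 0$ (since $\alpha\beta_1$ is a splitting of $\alpha\beta_2$ and $\beta_1$ is a splitting of $\beta_2$), so their difference has no determined sign. What one actually needs is a quantitative comparison of the two decreases, and this is a new argument, not a mirror image of Part 1. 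The paper supplies two explicit conditional-entropy inequalities for this purpose: a vertex bound
\[
\shent(\alpha\mid\beta_2)\leq\shent(\beta_1\mid\beta_2)+\shent(\alpha\mid\beta_1),
\]
and an edge bound obtained from it via subadditivity and $T$-invariance,
\[
\shent(\alpha^{\{e,s_i\}}\mid\beta_1^{\{e,s_i\}})-\shent(\alpha^{\{e,s_i\}}\mid\beta_2^{\{e,s_i\}})\geq -\bigl(\shent(\beta_1\mid\beta_2^{\{e,s_i^{-1}\}})+\shent(\beta_1\mid\beta_2^{\{e,s_i\}})\bigr),
\]
which are then assembled and compared to a telescoping sum $\sum_{s}\shent(\beta_1\mid\beta_2^{\{e,s\}})$ over the $2r$ generators $s\in\{s_1^{\pm1},\dots,s_r^{\pm1}\}$. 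The decisive step is that for the distinguished generator $t$ witnessing the splitting one has $\shent(\beta_1\mid\beta_2^{\{e,t\}})=0$ (since $\beta_1$ is measurable with respect to $\beta_2$ together with $\tilde\beta\circ T_t$), which removes one of the $2r$ competing terms; the remaining $2r-1$ terms are each dominated using $\shent(\beta_1\mid\beta_2^{\{e,s\}})\leq\shent(\beta_1\mid\beta_2)$. None of this cancellation structure is present in your sketch, and without it the conclusion of Part 2 does not follow. Your concluding remark about replacing the infimum and supremum by limits is a correct consequence of the lemma and matches the paper's usage.
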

\begin{proof}
	\begin{enumerate}
		\item This is essentially Proposition 5.1 of \cite{bowen2010a}; conditioning on $\beta$ makes no difference to the proof.
		
		\item The proof is based on the proof of Part 1, but in place of the chain rule for conditional entropy we use the following bound:
		\begin{align*}
			\shent(\alpha \mid \beta_2)
				&\leq \shent(\alpha, \beta_1 \mid \beta_2) & \text{(monotonicity)} \\
				&= \shent(\beta_1 \mid \beta_2) + \shent(\alpha \mid \beta_1, \beta_2) & \text{(chain rule)} \\
				&\leq \shent(\beta_1 \mid \beta_2) + \shent(\alpha \mid \beta_1) & \text{(monotonicity)} \mathrlap{.}
		\end{align*}
		We will also use the following consequence of the previous bound:
		\begin{align*}
			&\hspace{-3em}\shent(\alpha^{\{e,s_i\}} \mid \beta_1^{\{e,s_i\}}) - \shent(\alpha^{\{e,s_i\}} \mid \beta_2^{\{e,s_i\}}) \\
			&\geq -\shent(\beta_1^{\{e,s_i\}} \mid \beta_2^{\{e,s_i\}}) & \text{(previous bound)} \\
			&\geq -\big( \shent(\beta_1^{\{s_i\}} \mid \beta_2^{\{e,s_i\}}) + \shent(\beta_1 \mid \beta_2^{\{e,s_i\}}) \big) & \text{(subadditivity)} \\
			&= -\big( \shent(\beta_1 \mid \beta_2^{\{e,s_i^{-1}\}}) + \shent(\beta_1 \mid \beta_2^{\{e,s_i\}}) \big) & \text{($T$-invariance of $\mu$)} \mathrlap{.}
		\end{align*}
		
		It suffices to check the case where $\beta_1$ is a simple splitting of $\beta_2$: let $t \in \{s_1^{\pm 1}, \ldots, s_r^{\pm 1} \}$ and let $\tilde{\beta}$ be a coarsening of $\beta_2$ such that the partition induced by $\beta_1$ is the same as the coarsest common refinement of the partitions induced by $\beta_2$ and $\tilde{\beta} \circ T_t$ up to null sets. Then, using the two bounds just derived,
		\begin{align*}
			F(\alpha | \beta_1) - F(\alpha | \beta_2) 
			&= (1-2r) \left( \shent(\alpha | \beta_1) - \shent(\alpha | \beta_2) \right) \\
			&\qquad + \sum_{i \in [r]} \left( \shent(\alpha^{\{e,s_i\}}  | \beta_1^{\{e,s_i\}}) - \shent(\alpha^{\{e,s_i\}}  | \beta_1^{\{e,s_i\}}) \right) \\
			&\geq (1-2r) \left( - \shent(\beta_1 | \beta_2) \right) - \sum_{i \in [r]} \left( \shent(\beta_1 \mid \beta_2^{\{e,s_i^{-1}\}}) + \shent(\beta_1 \mid \beta_2^{\{e,s_i\}}) \right) \\
			&= (2r-1) \shent(\beta_1 | \beta_2) - \sum_{s \in \{s_1^{\pm 1} \ldots s_r^{\pm 1}\}} \shent(\beta_1 \mid \beta_2^{\{e,s\}})
		\end{align*}
		But
			\[ \shent(\beta_1 \mid \beta_2^{\{e,t\}}) \leq \shent(\beta_1 \mid \beta_2 \tilde{\beta}^{\{t\}}) = 0 , \]
		so we can remove the $t$ term from the sum to get
		\begin{align*}
			F(\alpha | \beta_1) - F(\alpha | \beta_2) &\geq (2r-1) \shent(\beta_1 | \beta_2) - \sum_{s \in \{s_1^{\pm 1} \ldots s_r^{\pm 1}\} \setminus \{t\}} \shent(\beta_1 \mid \beta_2^{\{e,s\}}) \\
			&= \sum_{s \in \{s_1^{\pm 1} \ldots s_r^{\pm 1}\} \setminus \{t\}} \left( \shent(\beta_1 | \beta_2) - \shent(\beta_1 \mid \beta_2^{\{e,s\}}) \right) \\
			&\geq 0 . \qedhere
		\end{align*}
	\end{enumerate}
\end{proof}

One corollary is the following convenient formula:
\begin{cor}
\label{cor:markovF}
	Let $\alpha, \beta$ be finite observables such that $\beta^G_*\mu$ is a Markov measure. Then $F_\mu(T, \alpha^{k_1} \mid \beta^{k_2})$ is independent of $k_2$. In particular, 
		\[ f_\mu(T, \alpha \mid \beta) = \inf_k F_\mu (T, \alpha^k \mid \beta) . \]
\end{cor}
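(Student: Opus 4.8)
\emph{Proof plan.}
The plan is to show that, when $\beta^G_*\mu$ is Markov, the function $k_2 \mapsto F_\mu(T,\alpha^{k_1}\mid\beta^{k_2})$ is constant for each fixed $k_1$; the displayed formula follows immediately, since by definition $f_\mu(T,\alpha\mid\beta) = \inf_{k_1}\sup_{k_2}F_\mu(T,\alpha^{k_1}\mid\beta^{k_2})$, so the inner supremum is then just the common value $F_\mu(T,\alpha^{k_1}\mid\beta^0) = F_\mu(T,\alpha^{k_1}\mid\beta)$, and $f_\mu(T,\alpha\mid\beta) = \inf_{k_1}F_\mu(T,\alpha^{k_1}\mid\beta)$.

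First I would record the monotonicity coming from Lemma \ref{lem:splittings}. For every $k_2$ the observable $\beta^{k_2+1}$ is a splitting of $\beta^{k_2}$: list the vertices of the sphere $\ball{e}{k_2+1}\setminus\ball{e}{k_2}$ and adjoin their $\beta$-coordinates one at a time, so that at each stage the recorded set of vertices is a subtree of the Cayley tree and the new vertex is $ut$ for an already-recorded neighbour $u$ and a generator $t$, which is exactly a simple splitting (take $\tilde{\beta}$ to be the coordinate at $u$). Hence Lemma \ref{lem:splittings}(2) gives $F_\mu(T,\alpha^{k_1}\mid\beta^{k_2+1}) \ge F_\mu(T,\alpha^{k_1}\mid\beta^{k_2})$.

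For the reverse inequality I would use the chain rule $F_\mu(T,\alpha^{k_1}\mid\beta^{k_2}) = F_\mu(T,\alpha^{k_1}\beta^{k_2}) - F_\mu(T,\beta^{k_2})$ together with two facts. The first is that $\alpha^{k_1}\beta^{k_2+1}$ is a splitting of $\alpha^{k_1}\beta^{k_2}$ — the same sphere-vertex-at-a-time construction, now adjoining $\beta$-coordinates on top of the fixed $\alpha$-coordinates — so Lemma \ref{lem:splittings}(1) gives $F_\mu(T,\alpha^{k_1}\beta^{k_2+1}) \le F_\mu(T,\alpha^{k_1}\beta^{k_2})$. The second is that $F_\mu(T,\beta^{k_2})$ is independent of $k_2$ when $\beta^G_*\mu$ is Markov. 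Granting both, $F_\mu(T,\alpha^{k_1}\mid\beta^{k_2+1}) = F_\mu(T,\alpha^{k_1}\beta^{k_2+1}) - F_\mu(T,\beta^{k_2+1}) \le F_\mu(T,\alpha^{k_1}\beta^{k_2}) - F_\mu(T,\beta^{k_2}) = F_\mu(T,\alpha^{k_1}\mid\beta^{k_2})$, which together with the previous paragraph yields the claimed constancy.

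The one nontrivial input is that $F_\mu(T,\beta^{k_2})$ does not depend on $k_2$ for a Markov $\beta^G_*\mu$, and I expect this to be where the real work lies; it is essentially the computation of the $f$-invariant of a Markov chain over a free group from \cite{bowen2010a}. I would prove it by expanding $F(W_{\beta^{k_2}}) = (1-2r)\shent\big(\beta^{\ball{e}{k_2}}_*\mu\big) + \sum_{i}\shent\big(\beta^{\ball{e}{k_2}\cup\ball{s_i}{k_2}}_*\mu\big)$ and using that, since the Cayley graph of $G$ is a $2r$-regular tree and a Markov measure is a tree-indexed Markov chain, for every finite subtree $D\subseteq G$ one has $\shent\big(\beta^D_*\mu\big) = \sum_{v\in D}\shent\big(\beta^{\{v\}}_*\mu\big) - \sum_{\{v,w\}}I_{\mu}\big(\beta^{\{v\}};\beta^{\{w\}}\big)$, the sum over edges of $D$, with the per-vertex term and the per-edge term (for each of the $r$ edge-directions) the same for all $D$ by shift-invariance; a short count of the vertices and of the edges of each direction inside $\ball{e}{k_2}$ and inside $\ball{e}{k_2}\cup\ball{s_i}{k_2}$ then shows that the $k_2$-dependence cancels. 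The main obstacle is thus this entropy bookkeeping on the tree — in particular verifying that $\ball{e}{k_2}\cup\ball{s_i}{k_2}$ is itself a subtree so that the decomposition applies, and getting the vertex/edge counts right — after which the rest is the formal chain-rule and splitting manipulation above.
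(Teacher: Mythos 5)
Your skeleton is the same as the paper's: one direction by Lemma~\ref{lem:splittings}(2) (since $\beta^{k_2+1}$ is a splitting of $\beta^{k_2}$), and the other by writing $F_\mu(T,\alpha^{k_1}\mid\beta^{k_2}) = F_\mu(T,\alpha^{k_1}\beta^{k_2}) - F_\mu(T,\beta^{k_2})$, using Lemma~\ref{lem:splittings}(1) on the first term, and using that $F_\mu(T,\beta^{k_2})$ is constant. Your identification of $\beta^{k_2+1}$ as a splitting of $\beta^{k_2}$ by adjoining sphere vertices one at a time, each a simple splitting, is the right verification, and the ``in particular'' deduction is fine.

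The one place you diverge is the key input that $F_\mu(T,\beta^{k_2})$ is independent of $k_2$ for a Markov $\beta^G_*\mu$: the paper simply cites Theorem~6.1 of \cite{bowen2010c} for this, whereas you propose to re-derive it from the entropy decomposition $\shent(\beta^D_*\mu)=\sum_{v\in D}\shent(\beta^{\{v\}}_*\mu)-\sum_{\{v,w\}\in E(D)}I_\mu(\beta^{\{v\}};\beta^{\{w\}})$ for a tree-indexed Markov chain on a finite subtree $D$. That plan is sound: $\ball{e}{k}\cup\ball{s_i}{k}$ is indeed a subtree (it is the $k$-neighbourhood of the edge $\{e,s_i\}$), and by shift-invariance the per-vertex entropy $H_0$ and the per-edge mutual information $I_j$ depend only on the generator direction $j$. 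You worry about getting the directional edge counts in $\ball{e}{k}\cup\ball{s_i}{k}$ right, but in fact after summing over $i\in[r]$ the coefficient of each $I_j$ depends only on the \emph{total} edge count $\abs{\ball{e}{k}\cup\ball{s_i}{k}}-1$, which is the same for every $i$; together with $\abs{\ball{e}{k}}-1 = r\cdot(\text{number of $s_j$-edges in }\ball{e}{k})$, a short computation gives $F_\mu(T,\beta^k)=H_0-\sum_j I_j$, independent of $k$. So the bookkeeping you flag as the main obstacle does close up, and more easily than you anticipated. In short: same proof as the paper, except you inline a computation the paper outsources to a citation.
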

\begin{proof}
	By the previous proposition, for any $k \leq k_2$ we have
		\[ F_\mu(T, \alpha^{k_1} \mid \beta^k) \leq F_\mu(T, \alpha^{k_1} \mid \beta^{k_2}) . \]
		
	On the other hand, by Theorem 6.1 of \cite{bowen2010c} $F_\mu(T, \beta^k) = F_\mu (T, \beta^{k_2})$ so
		\[ F_\mu(T, \alpha^{k_1} \mid \beta^{k})  = F_\mu(T, \alpha^{k_1} \beta^{k}) - F_\mu(T, \beta^{k_2}) . \]
	Applying monotonicity under splitting to the first term on the right gives
		\[ F_\mu(T, \alpha^{k_1} \mid \beta^{k})  \geq F_\mu(T, \alpha^{k_1} \beta^{k_2}) - F_\mu(T, \beta^{k_2}) = F_\mu(T, \alpha^{k_1} \mid \beta^{k_2}). \]
	
	This establishes independence of $k_2$; the formula for $f$ follows.
\end{proof}

\begin{prop}
\label{prop:relfbound}
	Let $\alpha, \beta$ be finite observables. Then for any $k \in \NN$,
		\[ F_\mu(T, \alpha^k \mid \beta ) \leq \shent_\mu \big( \alpha \mid \beta \big) . \]
	It follows that
		\[ f_\mu(T, \alpha \mid \beta ) \leq \shent_\mu \big( \alpha \mid \beta \big) . \]
\end{prop}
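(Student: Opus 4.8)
The plan is to reduce both assertions to a single pointwise inequality, valid for every pair of finite observables $\gamma,\delta$:
\[ F_\mu(T,\gamma\mid\delta)\ \le\ \shent_\mu(\gamma\mid\delta); \]
call this $(\ast)$. The proposition then follows by feeding $(\ast)$ through two kinds of monotonicity: monotonicity of $F$ under splitting (the first part of Lemma~\ref{lem:splittings}), and the ordinary monotonicity of conditional Shannon entropy.

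I would first prove $(\ast)$, which is the only real computation. Starting from the defining identity
\[ F_\mu(T,\gamma\mid\delta)=(1-2r)\,\shent_\mu(\gamma\mid\delta)+\sum_{i=1}^r\shent_\mu\!\big(\gamma^{\{e,s_i\}}\mid\delta^{\{e,s_i\}}\big), \]
I would bound each edge term by $\shent_\mu\!\big(\gamma^{\{e,s_i\}}\mid\delta^{\{e,s_i\}}\big)\le 2\,\shent_\mu(\gamma\mid\delta)$. Indeed $\gamma^{\{e,s_i\}}$ is the joint observable $(\gamma,\gamma\circ T_{s_i})$, so subadditivity of conditional entropy bounds the edge term by $\shent_\mu(\gamma\mid\delta,\delta\circ T_{s_i})+\shent_\mu(\gamma\circ T_{s_i}\mid\delta,\delta\circ T_{s_i})$; dropping one coordinate from each conditioning bounds this in turn by $\shent_\mu(\gamma\mid\delta)+\shent_\mu(\gamma\circ T_{s_i}\mid\delta\circ T_{s_i})$; and $T$-invariance of $\mu$ identifies $\shent_\mu(\gamma\circ T_{s_i}\mid\delta\circ T_{s_i})$ with $\shent_\mu(\gamma\mid\delta)$. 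Since $1-2r\le -1<0$ and $\shent_\mu(\gamma\mid\delta)\ge 0$, substituting these $r$ bounds into the identity yields $F_\mu(T,\gamma\mid\delta)\le(1-2r)\,\shent_\mu(\gamma\mid\delta)+2r\,\shent_\mu(\gamma\mid\delta)=\shent_\mu(\gamma\mid\delta)$.

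To obtain the first inequality of the proposition, I would observe that $\alpha^k$ is a splitting of $\alpha$: one builds $\alpha^k$ from $\alpha$ by a finite sequence of simple splittings, each adjoining one new orbit-observation $\alpha\circ T_g$ (for $g\in\ball{e}{k}$) using as the coarsening $\tilde\alpha$ the coordinate $\alpha\circ T_{g'}$ already present at that stage, where $g=g's$ with $|g'|=|g|-1$. The first part of Lemma~\ref{lem:splittings} then gives $F_\mu(T,\alpha^k\mid\beta)\le F_\mu(T,\alpha\mid\beta)$, and $(\ast)$ applied to $(\gamma,\delta)=(\alpha,\beta)$ finishes it. For the bound on $f$, the same two steps run verbatim with $\beta^{k_2}$ in place of $\beta$: for all $k_1,k_2\in\NN$,
\[ F_\mu(T,\alpha^{k_1}\mid\beta^{k_2})\ \le\ F_\mu(T,\alpha\mid\beta^{k_2})\ \le\ \shent_\mu(\alpha\mid\beta^{k_2})\ \le\ \shent_\mu(\alpha\mid\beta), \]
the last step being monotonicity of conditional entropy under refining the conditioning. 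Since this bound does not depend on $k_2$, taking $\sup_{k_2}$ and then $\inf_{k_1}$ in the definition $f_\mu(T,\alpha\mid\beta)=\inf_{k_1}\sup_{k_2}F_\mu(T,\alpha^{k_1}\mid\beta^{k_2})$ gives $f_\mu(T,\alpha\mid\beta)\le\shent_\mu(\alpha\mid\beta)$.

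The one point needing a little care — the only thing I would call an obstacle, and a minor one — is justifying that $\alpha^k$ (and each $\alpha^{k_1}$) really is a splitting of $\alpha$, i.e. that at every simple-splitting step the coarsening invoked is genuinely a coarsening of the observable produced at the previous step, so that the first part of Lemma~\ref{lem:splittings} applies. Everything else is a direct application of subadditivity, monotonicity, and $T$-invariance of Shannon entropy, combined with the sign of $1-2r$.
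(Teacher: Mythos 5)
Your proposal is correct and follows essentially the same route as the paper: first you bound $F_\mu(T,\alpha\mid\beta)$ by $\shent_\mu(\alpha\mid\beta)$ by splitting each edge term $\shent_\mu(\alpha^{\{e,s_i\}}\mid\beta^{\{e,s_i\}})$ via subadditivity, dropping conditioning, and invoking $T$-invariance, and then use part (1) of Lemma~\ref{lem:splittings} to pass from $\alpha^k$ to $\alpha$; the $f$ bound is obtained by running the same chain with $\beta^{k_2}$ in place of $\beta$ and then taking $\sup_{k_2}$ followed by $\inf_{k_1}$, exactly as in the paper. Your remark that one must check $\alpha^k$ is a splitting of $\alpha$ is a fair point (the paper treats it as standard), and the word-length induction you sketch is the right justification.
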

\begin{proof}
	By Lemma \ref{lem:splittings}, $F_\mu(T, \alpha^{k} \mid \beta ) \leq F_\mu(T, \alpha \mid \beta )$. Using elementary properties of Shannon entropy, we have
	\begin{align*}
		F_\mu(T, \alpha \mid \beta ) &= (1-2r) \shent_\mu (\alpha \mid \beta) + \sum_{i \in [r]} \shent_\mu \big( \alpha^{\{e,s_i\}} \mid \beta^{\{e, s_i\}} \big) \\
		&\leq (1-2r) \shent_\mu (\alpha \mid \beta) + \sum_{i \in [r]} \left[ \shent_\mu \big( \alpha \mid \beta^{\{e, s_i\}} \big) + \shent_\mu \big( \alpha^{\{s_i\}} \mid \beta^{\{e, s_i\}} \big) \right] \\
		&\leq (1-2r) \shent_\mu (\alpha \mid \beta) + \sum_{i \in [r]} \left[ \shent_\mu \big( \alpha \mid \beta \big) + \shent_\mu \big( \alpha^{\{s_i\}} \mid \beta^{\{s_i\}} \big) \right] .
	\end{align*}
	By $T$-invariance of $\mu$ we have
		\[ \shent_\mu \big( \alpha^{\{s_i\}} \mid \beta^{\{s_i\}} \big) = \shent_\mu ( \alpha \mid \beta ) , \]
	so the first inequality follows.
	
	For any $k_1, k_2 \in \NN$ this gives
		\[ F_\mu (T, \alpha^{k_1} \mid \beta^{k_2}) \leq \shent_\mu ( \alpha \mid \beta^{k_2} ) \leq \shent_\mu (\alpha \mid \beta) , \]
	so the second inequality follows upon taking the supremum over $k_2$ then the infimum over $k_1$.
\end{proof}

We can use this bound to give a proof of the chain rule for the relative $f$-invariant, a version of which first appeared in \cite{bowen2010c} (there it is called the Abramov-Rokhlin formula; see also \cite{bowen2013}):

\begin{cor}[Chain rule]
\label{cor:chainrule}
	\[ f_\mu (T, \alpha\tup\beta) = f_\mu (T, \alpha \mid \beta) + f_\mu (T, \beta). \]
\end{cor}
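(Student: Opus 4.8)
The plan is to prove the chain rule by reducing it to the corresponding statement at the level of the finite-level functions $F$ and then passing to the limit, using the monotonicity and boundedness results established above.

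First I would recall the algebraic identity that holds at every finite level: by definition of the relative version of $F$, for all $k_1, k_2 \in \NN$ we have
\[ F_\mu(T, \alpha^{k_1} \tup \beta^{k_2}) = F_\mu(T, \alpha^{k_1} \mid \beta^{k_2}) + F_\mu(T, \beta^{k_2}). \]
The goal is to take the appropriate iterated limits/infima/suprema of both sides. The right-hand side is the sum of two terms whose limits are, by definition, $f_\mu(T, \alpha \mid \beta)$ (an $\inf_{k_1}\sup_{k_2}$) and $f_\mu(T, \beta)$ (an $\inf_{k_2}$, noting $F_\mu(T,\beta^{k_2})$ does not involve $\alpha$). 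The left-hand side, however, is naturally indexed by a single radius: $f_\mu(T, \alpha\tup\beta) = \inf_n F_\mu(T, (\alpha\beta)^n)$, and $(\alpha\beta)^n = \alpha^n\beta^n$, so I need to compare the single-index limit $\inf_n F_\mu(T, \alpha^n \beta^n)$ with the double-index expression.

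The key technical step is therefore to show that the double limit collapses to a single one, i.e. that
\[ \inf_{k_1} \sup_{k_2} \big[ F_\mu(T, \alpha^{k_1}\beta^{k_2}) - F_\mu(T,\beta^{k_2}) \big] + \inf_{k_2} F_\mu(T,\beta^{k_2}) = \inf_{n} F_\mu(T, \alpha^n\beta^n). \]
For this I would invoke Lemma \ref{lem:splittings}: since $\alpha^{n}\beta^{n}$ is a splitting of $\alpha^{k_1}\beta^{k_2}$ whenever $n \geq \max(k_1,k_2)$ (the ball of larger radius refines the smaller one, and the refinement is by splittings in Bowen's sense), monotonicity under splitting in the first argument gives $F_\mu(T, \alpha^n \mid \beta^n) \le F_\mu(T, \alpha^{k_1} \mid \beta^n)$, while monotonicity in the second argument (Part 2 of the Lemma) controls the dependence on the conditioning radius. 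Combining these with the fact (from Lemma \ref{lem:splittings} and the discussion after the definition of $f_\mu(T,\alpha\mid\beta)$) that the infimum and supremum defining $f_\mu(T,\alpha\mid\beta)$ may both be replaced by limits, one can sandwich $F_\mu(T, \alpha^n \mid \beta^n)$ between expressions converging to $f_\mu(T,\alpha\mid\beta)$, so that $\lim_n F_\mu(T,\alpha^n \mid \beta^n) = f_\mu(T,\alpha\mid\beta)$. Then adding $\lim_n F_\mu(T,\beta^n) = f_\mu(T,\beta)$ term by term in the identity $F_\mu(T,\alpha^n\beta^n) = F_\mu(T,\alpha^n\mid\beta^n) + F_\mu(T,\beta^n)$ and taking $n \to \infty$ yields the result — here one uses that the sum of the limits equals the limit of the sum, which is legitimate because each summand converges (with the caveat, already flagged in the excerpt, that $f_\mu(T,\beta) > -\infty$, needed so that the sum is not an indeterminate $\infty - \infty$; if $f_\mu(T,\beta) = -\infty$ then $f_\mu(T,\alpha\beta) = -\infty$ as well by monotonicity and the identity still holds in the extended reals).

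I expect the main obstacle to be the bookkeeping in the sandwiching argument: verifying carefully that $\alpha^n\beta^n$ really is a splitting (not merely a refinement) of $\alpha^{k_1}\beta^{k_2}$, so that Lemma \ref{lem:splittings} applies, and then choosing the two index sequences so that both the upper and lower bounds for $F_\mu(T,\alpha^n\mid\beta^n)$ converge to the same value $f_\mu(T,\alpha\mid\beta)$. Proposition \ref{prop:relfbound} is what guarantees the relevant quantities stay bounded above (by $\shent_\mu(\alpha\mid\beta) < \infty$), preventing the suprema from diverging and making the interchange of limits valid; this is why it was proved immediately beforehand. Once the single-index limit $\lim_n F_\mu(T,\alpha^n\mid\beta^n) = f_\mu(T,\alpha\mid\beta)$ is established, the chain rule is immediate.
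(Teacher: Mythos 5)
Your proposal uses the same ingredients as the paper's proof — the finite-level identity $F_\mu(T, \alpha^{k_1}\tup\beta^{k_2}) = F_\mu(T, \alpha^{k_1}\mid\beta^{k_2}) + F_\mu(T, \beta^{k_2})$, monotonicity from Lemma~\ref{lem:splittings}, and the upper bound from Proposition~\ref{prop:relfbound} to justify passing limits across the sum — and it is correct in spirit. Where you diverge is in routing through the \emph{diagonal} limit: you aim to show $\lim_n F_\mu(T,\alpha^n\mid\beta^n) = f_\mu(T,\alpha\mid\beta)$ and then add $\lim_n F_\mu(T,\beta^n)$. The paper instead takes $k_2 \to \infty$ in the two-index identity, splits the right-hand side (legitimate by monotonicity in $k_2$ plus the uniform upper bound), then sends $k_1 \to \infty$, never asking whether the conditional $F$ converges along the diagonal.

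This difference is not cosmetic, and the "bookkeeping" you flag as the main obstacle is a genuine gap in the sandwiching step. The conditional quantity $F_\mu(T,\alpha^{k_1}\mid\beta^{k_2})$ is nonincreasing in $k_1$ but \emph{nondecreasing} in $k_2$, so the naive bounds you get are
\[
\sup_{k_2}\inf_{k_1}F_\mu(T,\alpha^{k_1}\mid\beta^{k_2})
\;\le\;
\liminf_n F_\mu(T,\alpha^n\mid\beta^n)
\;\le\;
\limsup_n F_\mu(T,\alpha^n\mid\beta^n)
\;\le\;
\inf_{k_1}\sup_{k_2}F_\mu(T,\alpha^{k_1}\mid\beta^{k_2}),
\]
and $\sup\inf \le \inf\sup$ is an \emph{inequality} in general (a doubly monotone array with opposite monotonicities can have a strict gap between these). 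To close the sandwich you must verify $\sup\inf = \inf\sup$ for this particular array, which in practice comes from rewriting $F_\mu(T,\alpha^{k_1}\mid\beta^{k_2}) = F_\mu(T,\alpha^{k_1}\tup\beta^{k_2}) - F_\mu(T,\beta^{k_2})$ and using that the \emph{full} $F_\mu(T,\alpha^{k_1}\tup\beta^{k_2})$ is nonincreasing in \emph{both} indices, together with $f_\mu(T,\beta) > -\infty$. That is essentially the paper's argument done in a different order, so your route is sound once this step is made explicit; but as written the "one can sandwich ... between expressions converging to $f_\mu(T,\alpha\mid\beta)$" hides exactly the point where the argument needs care. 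The paper's ordering of limits is cleaner precisely because it only ever needs the single-sign monotonicity of the unconditional $F_\mu(T,\alpha^{k_1}\tup\beta^{k_2})$, which immediately identifies $\lim_{k_1}\lim_{k_2}$ with $\inf_n F_\mu(T,(\alpha\beta)^n) = f_\mu(T,\alpha\tup\beta)$. Your treatment of the degenerate case $f_\mu(T,\beta)=-\infty$ is fine.
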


\begin{proof}
	By definition of the relative version of $F$ and the chain rule for conditional entropy, for each $k_1, k_2$ we have
		\[ F_\mu(T, \alpha^{k_1} \tup \beta^{k_2}) = F_\mu(T, \alpha^{k_1} \mid \beta^{k_2}) + F_\mu(T, \beta^{k_2}) . \]
	By Lemma \ref{lem:splittings} each term is monotone in $k_2$, so the limits as $k_2 \to \infty$ exist. By Proposition \ref{prop:relfbound} all terms are bounded above (recall we only consider finite observables, so in particular all observables have finite entropy), so we can split the limit across the sum on the right to get
		\[ \lim_{k_2 \to \infty} F_\mu(T, \alpha^{k_1} \tup \beta^{k_2}) =  \lim_{k_2 \to \infty} F_\mu(T, \alpha^{k_1} \mid \beta^{k_2}) + f_\mu(T, \beta) . \]
	Taking $k_1$ to infinity gives the result.
\end{proof}

\section{Non-vacuity of Main Theorems}
\label{sec:nonvacuity}

\subsection{Theorem \ref{thm:main1}}

Here we prove Proposition \ref{prop:main1}, which asserts the nonvacuity of Theorem \ref{thm:main1}. Given $\beta \colon X \to \B$, we need to show that there exist $\mb{y}_n \in \B^n$ and $\sigma_n \in \Hom(G, \Sym(n))$ such that $\lim_{n \to \infty} d_0^*(P_{\mb{y}_n}^{\sigma_n}, \beta^G_* \mu) = 0$.

By Lemma \ref{lem:bowenweightapprox}, there is a sequence $\{W_n\}_{n=1}^\infty$ of $\B$-weights such that $W_n$ has denominator $n$ for each $n$ and $d(W_n, W_\beta) = o(1)$. By Proposition \ref{prop:weightinverse}, for each $n$ we can pick $\mb{y}_n,\sigma_n$ such that $W_{\sigma_n, \mb{y}_n} = W_n$. Since $d_0^*(P_{\mb{y}_n}^{\sigma_n}, \beta^G_* \mu) = d(W_{\sigma_n, \mb{y}_n}, W_\beta)$, these suffice.

\subsection{Theorems \ref{thm:main2} and \ref{thm:main3}}

Here we prove Proposition \ref{prop:main2}, which asserts the nonvacuity of Theorem \ref{thm:main2} (and by extension Theorem \ref{thm:main3}, since the assumptions are the same).

Let $m_n$ approach infinity as $n$ approaches infinity while satisfying $m_n = o(\log \log n)$ and let $\beta \colon X \to \B$ be a finite observable. We need to show that there exist $\mb{y}_n \in \B^n$ and $\sigma_n \in \Hom(G, \Sym(n))$ such that $d_{m_n}^*(P_{\mb{y}_n}^{\sigma_n}, \beta^G_* \mu) = O(\frac{1}{\log n})$.

By Lemma \ref{lem:bowenweightapprox}, there is a sequence $\{W_n\}_{n=1}^\infty$ of weights such that $W_n$ is a denominator-$n$ $\B^{\ball{e}{m_n}}$-weight for each $n$ and $d(W_n, W_{\beta^{m_n}}) = O(\frac{\abs{\B^{\ball{e}{m_n}}}^2}{n})$. By Proposition \ref{prop:weightinverse}, for each $n$ we can pick $\mb{Y}_n,\sigma_n$ such that $W_{\sigma_n, \mb{Y}_n} = W_n$. Let $\mb{y}_n = \pi_e \mb{Y}_n$. By Proposition \ref{prop:weightballapprox},
	\[ d_{m_n}^*(P_{\mb{y}_n}^{\sigma_n}, \beta^G_* \mu) = d(W_{\sigma_n, \mb{y}_n^{m_n}}, W_{\beta^{m_n}}) = O \! \left( \! \abs{\ball{e}{m_n}} \cdot \frac{\abs{\B^{\ball{e}{m_n}}}^2}{n}\right) = O \! \left(\frac{1}{\log n} \right) . \]

\section{Counting Lemmas}
\label{sec:counting}

For a $\B$-weight $W$, let $Z_n(W)$ denote the number of pairs $(\sigma, \mb{y}) \in \Hom(G,\Sym(n)) \times \B^n$ such that $W_{\sigma,\mb{y}} = W$.
\begin{prop}
\label{prop:Zbounds}
If $W$ is a $\B$-weight with denominator $n$ then
	\[ (3\sqrt{n})^{-r \abs{\B}^2} \leq \frac{Z_n(W)}{e^{F(W) n} (n!)^r n^{(1-r)/2}} \leq (3\sqrt{n})^{r\abs{\B}^2} . \]
\end{prop}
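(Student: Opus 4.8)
The plan is to count the pairs $(\sigma,\mb{y})$ realizing $W$ by first choosing the labeling $\mb{y}$ and then, for each coordinate direction $s_i$, choosing the permutation $\sigma(s_i)$ so that the induced edge statistics match $W(\cdot,\cdot;i)$ exactly. Since $W$ determines its vertex measure $W(\cdot)$, the number of valid labelings $\mb{y}\in\B^n$ is the multinomial coefficient $\binom{n}{(nW(b))_{b\in\B}}$, which by Stirling's formula is $e^{\shent(W(\cdot))n}$ up to a polynomial-in-$n$ factor of the form $n^{\pm O(\abs{\B})}$. Then, fixing such a $\mb{y}$ and a direction $i$, a permutation $\tau\in\Sym(n)$ satisfies $W_{\sigma,\mb{y}}(\cdot,\cdot;i)=W(\cdot,\cdot;i)$ (with $\sigma(s_i)=\tau$) exactly when, for each pair $(b,b')$, the number of $j$ with $\mb{y}_j=b$ and $\mb{y}_{\tau j}=b'$ equals $nW(b,b';i)$. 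The number of such $\tau$ is a product over $b$ of multinomial coefficients times a product over $b'$ of factorials: choosing, for each source class $b$ (of size $nW(b)$), how its image is distributed among the target classes, then choosing the actual bijections. This count works out to $\prod_b (nW(b))! \cdot \prod_{b,b'} \frac{1}{(nW(b,b';i))!} \cdot \prod_{b'} (nW(b'))!$ — i.e. essentially $\frac{\big(\prod_b (nW(b))!\big)^2}{\prod_{b,b'}(nW(b,b';i))!}$ — and applying Stirling to each factorial turns this into $e^{\shent(W(\cdot,\cdot;i))n}\cdot (n!)$ up to a polynomial factor $n^{\pm O(\abs{\B}^2)}$.

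Carrying this out in order: (1) write $Z_n(W)$ as the product over $i\in[r]$ of the per-direction permutation counts, times the labeling count, taking care that the labeling is shared across all $r$ directions so the vertex-measure factor $e^{\shent(W(\cdot))n}$ appears once while each of the $r$ edge factors contributes; (2) invoke the Stirling bounds $\sqrt{2\pi m}\,(m/e)^m \le m! \le e\sqrt{m}\,(m/e)^m$ (or the cruder two-sided bound with the stated constant $3\sqrt{\cdot}$) applied to every factorial appearing — there are $O(r\abs{\B}^2)$ of them, which is where the exponent $r\abs{\B}^2$ and base $3\sqrt{n}$ in the claimed bounds come from; (3) collect the exponential terms: the $(m/e)^m$ pieces assemble, via $\sum_b -W(b)\log W(b)$-type sums, into $\exp\big((1-2r)\shent(W(\cdot))n + \sum_i \shent(W(\cdot,\cdot;i))n\big) = e^{F(W)n}$, using that the vertex measure is counted with the correct signed multiplicity $(1-2r)$ once one accounts for the $(nW(b))!$ factors appearing in numerators across the $r$ directions against the single labeling multinomial; (4) collect the remaining $n!$ factors — one genuine $n!$ per direction from $\prod_{b'}(nW(b'))!$ reorganizing, giving $(n!)^r$ — and the leftover powers of $n$ from Stirling's polynomial prefactors, which combine to $n^{(1-r)/2}$ exactly when the number of numerator and denominator factorials is tallied ($2r$ vertex-type minus $r\abs{\B}^2$ edge-type, balanced against the $r$ copies of $n!$), with everything else absorbed into the $(3\sqrt{n})^{\pm r\abs{\B}^2}$ slack.

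The main obstacle is the bookkeeping in step (3)–(4): getting the coefficient of $\shent(W(\cdot))$ to come out as exactly $1-2r$ rather than, say, $1-r$ or $-r$, and getting the residual power of $n$ to be exactly $n^{(1-r)/2}$, both require carefully matching which factorials live in numerators versus denominators across the shared labeling and the $r$ edge constraints, and then checking that the square-root Stirling corrections bundle correctly. The exponential part is essentially forced once the combinatorial identity for $Z_n(W)$ is written down correctly, so the real care is in the polynomial prefactor and in confirming that all the sloppy constants genuinely fit inside $(3\sqrt{n})^{r\abs{\B}^2}$; I would handle this by treating the upper and lower Stirling bounds symmetrically and never tracking constants better than the stated $3$ and $r\abs{\B}^2$. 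A secondary point to verify is that the per-direction counts are genuinely independent given $\mb{y}$ — this is immediate since $\sigma$ is a free homomorphism, so $\sigma(s_1),\dots,\sigma(s_r)$ may be chosen independently in $\Sym(n)$ — so no inclusion–exclusion across directions is needed.
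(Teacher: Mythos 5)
Your approach is fundamentally the same as the paper's: you arrive at the same closed form
\[
Z_n(W) \;=\; \frac{n!\,\prod_{b}(nW(b))!^{2r-1}}{\prod_{i=1}^r\prod_{b,b'}(nW(b,b';i))!}
\]
(your direct count of labelings times per-generator permutations is a clean rederivation of what the paper obtains by multiplying $(n!)^r$ into the expectation formula cited from Proposition 2.1 of \cite{bowen2010}), and then you apply a crude two-sided Stirling bound to every factorial and collect the exponential, the $(n!)^r$, and the residual power of $n$. Your accounting of the coefficient $(1-2r)$ of $\shent(W(\cdot))$ and of how the $\pm r\abs{\B}^2$ slack absorbs the half-power corrections is on the right track and matches the paper's calculation.

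There is one genuine gap you don't flag: the Stirling bound $k^{k+1/2}e^{-k}\le k!\le 3k^{k+1/2}e^{-k}$ (or your $\sqrt{2\pi m}(m/e)^m\le m!\le e\sqrt{m}(m/e)^m$) \emph{fails for $k=0$} --- the upper bound evaluates to $0<1=0!$. If any vertex weight $W(b)$ or edge weight $W(b,b';i)$ vanishes, applying Stirling ``to every factorial appearing'' as you propose gives an invalid upper bound. The paper handles this explicitly by restricting the Stirling-bounded products to the supports $\B' = \{b: W(b)\ne 0\}$ and $\B'_i = \{(b,b'): W(b,b';i)\ne 0\}$, noting that the omitted $0!$ factors contribute nothing, and then using $\abs{\B'}\le\abs{\B}$ and $S=\sum_i\abs{\B'_i}\le r\abs{\B}^2$ at the end to land on the stated exponent. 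This is more than cosmetic: the final $n$-power is $\tfrac{1-r}{2} + \tfrac{(2r-1)\abs{\B'}-S}{2}$, and you need the ``sparse'' cases (small $\abs{\B'}$, small $S$) to still fit inside $n^{(1-r)/2}\cdot(3\sqrt{n})^{\pm r\abs{\B}^2}$; that check requires knowing you only ever have $\abs{\B'}$ and $S$ (not $\abs{\B}$ and $r\abs{\B}^2$) half-powers in play. With that restriction added, your argument goes through and is essentially the paper's.
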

\begin{proof}

We write
	\[ Z_n(W) = \sum_{\sigma} \abs{\{ \mb{y} \in \B^n \st W_{\sigma, \mb{y}} = W \}} = (n!)^r \EE_\sigma \abs{\{ \mb{y} \in \B^n \st W_{\sigma, \mb{y}} = W \}}. \]
where $\EE_\sigma$ denotes the expectation over a uniform choice of $\sigma \in \Hom(G,\Sym(n))$.

Proposition 2.1 of \cite{bowen2010} states that
	\[  \EE_\sigma \abs{\{ \mb{y} \in \B^n \st W_{\sigma, \mb{y}} = W \}} = \frac{n!^{1-r} \prod_{b \in \B} (n W(b))!^{2r-1}}{\prod_{i=1}^r \prod_{b,b' \in \B} (n W(b,b'; i))!} . \]
Lemma 2.2 of the same paper gives an estimate of this quantity, but for our purposes we need to be more careful about how the estimate depends on the size of the alphabet.

We use the version of Stirling's approximation
	\[ k^{k+1/2} e^{-k} \leq k! \leq 3 \cdot k^{k+1/2} e^{-k}, \]
valid for $k \geq 1$. To estimate the products that appear in the expectation, we will need to omit all factors which equal $0! = 1$ since Stirling's approximation is not valid for these. To do this carefully, let 
	\[ \B' = \{ b \in \B \st W(b) \ne 0  \} \]
and for each $i \in [r]$ let
	\[  \B'_i = \{ (b,b') \in \B^2 \st W(b,b'; i) \ne 0 \}.\]
For the numerator of the above expectation we get
\begin{align*}
	n!^{1-r} \prod_{b \in \B'} (n W(b))!^{2r-1}
	&\leq (3 n^{n+1/2}\, e^{-n})^{1-r} \prod_{b \in \B'} \left(3 (n W(b))^{n W(b) + 1/2} e^{-n W(b)} \right)^{2r-1} \\[0.2em]
	&= 3^{1-r + \abs{\B'}(2r-1)}\, n^{rn + 1/2 - r/2 + (2r-1)\abs{\B'}/2} \\
	&\qquad \times e^{-rn+(2r-1)[n \sum_{b \in \B'} W(b) \log W(b) + \frac{1}{2} \sum_{b \in \B'} \log W(b)]}
\end{align*}
and a lower bound which is identical except missing the first factor. For the denominator, let $S = \sum_{i \in [r]}\abs{\B'_i}$. We get
\begin{align*}
	\prod_{i=1}^r \prod_{(b,b') \in \B'_i} (n W(b,b'; i))!
	&\leq \prod_{i=1}^r \prod_{(b,b') \in \B'_i} 3 (n W(b,b';i))^{n W(b,b';i) + 1/2} e^{-n W(b,b';i)} \\[0.2em]
	&= 3^{S}\, n^{nr + S/2} \\
	&\qquad \times e^{n \sum_{i} \sum_{b,b'} W(b,b'; i) \log W(b,b'; i) + \frac{1}{2} \sum_{i,b,b'} \log W(b,b'; i) - nr} ,
\end{align*}
and again we have a lower bound which is identical except missing the first factor $3^S$.
Therefore the quotient is bounded above by
	\[ 3^{1-r + \abs{\B'}(2r-1)}\, n^{(1-r)/2+(2r-1)\abs{\B'}/2 - S/2}\, e^{-n F(W) + (2r-1)\frac{1}{2} \sum_b \log W(b) - \frac{1}{2}\sum_{i,b,b'} \log W(b,b'; i)}  \]
and below by
	\[ 3^{-S}\, n^{(1-r)/2+(2r-1)\abs{\B'}/2 - S/2}\, e^{-n F(W) + (2r-1)\frac{1}{2} \sum_b \log W(b) - \frac{1}{2}\sum_{i,b,b'} \log W(b,b'; i)} . \]
Since $W$ has denominator $n$, we have
	\[ 0 \geq (2r-1)\frac{1}{2} \sum_{b \in \B'} \log W(b) \geq (2r-1)\frac{1}{2} \sum_{b \in \B'} \log \frac{1}{n} = -\frac{2r-1}{2} \abs{\B'} \log n \]
and
	\[ 0 \leq  - \frac{1}{2}\sum_{i}\sum_{(b,b') \in B'_i} \log W(b,b'; i) \leq - \frac{1}{2}\sum_{i}\sum_{(b,b') \in \B'_i} \log \frac{1}{n} = \frac{S}{2} \log n . \]
Therefore $Z_n(W)$ satisfies
	\[ 3^{-S} n^{\left((1-r) - S\right)/2} e^{F(W) n} (n!)^r \leq Z_n(W) \leq 3^{1-r + \abs{\B'}(2r-1)} n^{\left((1-r)+(2r-1)\abs{\B'}\right)/2} e^{F(W) n} (n!)^r . \]
Since $S \leq r \abs{\B}^2$ and $\abs{\B'} \leq \abs{\B}$,  we conclude that
	\[ 3^{-r \abs{\B}^2} n^{\left((1-r) - r\abs{\B}^2 \right)/2} e^{F(W) n} (n!)^r \leq Z_n(W) \leq 3^{1-r + \abs{\B}(2r-1)} n^{\left((1-r)+(2r-1)\abs{\B}\right)/2} e^{F(W) n} (n!)^r , \]
and the stated inequality follows.
\end{proof}
	

The following proposition establishes the connection between the relative version of $F$ and expected numbers of good models over stochastic block models.

\begin{prop}
\label{prop:singleweightestimate}

	Given any denominator-$n$ $(\A \times \B^{\ball{e}{k}})$-weight $W_{\A\B}$, let $W_\B$ denote the $\B^{\ball{e}{k}}$-weight $\pi_{\B} W_{\A\B}$.
	Let $\mb{y} \in \B^n$ be a fixed labeling with $p_{\mb{y}} = \pi_e W_\B(\cdot)$, and let
		\[ \mu = \SBM(\mb{y}, W_\B) =  \Unif(\{ \sigma \in \Hom(G, \Sym(n)) \st W_{\sigma, \mb{y}^k} = W_\B \}) , \]
	assuming $W_\B$ is such that the desired support is nonempty.
	Then
		\[ \calE \coloneqq \EE_{\sigma \sim \mu} \abs*{\{\mb{x} \in \A^n \st W_{\sigma,(\mb{x},\mb{y}^k)} = W_{\A\B}\}} = \frac{Z_n(W_{\A\B})}{Z_n(W_\B)} . \]
	In particular,
		\[ \frac{\calE}{e^{n (F(W_{\A\B}) - F(W_\B))} } \in \left( (9n)^{-r\abs{\B}^2(\abs{\A}^2+1)},\ (9n)^{r\abs{\B}^2(\abs{\A}^2+1)} \right) . \]
\end{prop}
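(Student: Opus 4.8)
The plan is to establish the exact identity $\calE = Z_n(W_{\A\B})/Z_n(W_\B)$ by a counting argument — it is purely combinatorial, needing no Markov or finiteness hypotheses, only that $\SBM(\mb{y},W_\B)$ has nonempty support — and then to read off the two-sided estimate by dividing the two bounds furnished by Proposition \ref{prop:Zbounds}. Throughout, for a given $\sigma$ write $\mb{y}^{\sigma,k}$ for the $\B^{\ball{e}{k}}$-labeling $j \mapsto (\mb{y}_{\sigma(g)j})_{g\in\ball{e}{k}}$ (this is the labeling denoted $\mb{y}^k$ in the statement, whose dependence on $\sigma$ we make explicit). Since $W_{\sigma,(\mb{x},\mb{y}^k)} = W_{\A\B}$ automatically forces $W_{\sigma,\mb{y}^k} = \pi_\B W_{\A\B} = W_\B$ (take the $\B^{\ball{e}{k}}$-marginal of the weight), the uniform average defining $\calE$ unwinds to $\calE = \mathcal{N}/\mathcal{D}$, where $\mathcal{N} = \abs{\{(\sigma,\mb{x})\st W_{\sigma,(\mb{x},\mb{y}^k)} = W_{\A\B}\}}$ and $\mathcal{D} = \abs{\{\sigma \st W_{\sigma,\mb{y}^k} = W_\B\}}$.

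Next I would fold in the $\Sym(n)$-symmetry. Letting $\tau\in\Sym(n)$ act coordinatewise on labelings, one checks directly that $W_{\tau\sigma\tau^{-1},\,\tau\cdot\mb{z}} = W_{\sigma,\mb{z}}$ for every labeling $\mb{z}$, and that $(\tau\mb{y})^{\tau\sigma\tau^{-1},k} = \tau\cdot(\mb{y}^{\sigma,k})$; consequently conjugation by $\tau$ carries $\{(\sigma,\mb{x})\st W_{\sigma,(\mb{x},\mb{y}^k)} = W_{\A\B}\}$ bijectively onto the same set with $\mb{y}$ replaced by $\tau\mb{y}$, and likewise for $\mathcal{D}$. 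Because realizing $W_{\A\B}$ (resp.\ $W_\B$) forces the $\B$-labeling to have vertex distribution $p_{\mb{y}} = \pi_e W_\B(\cdot)$, and $\Sym(n)$ acts transitively on the set of such labelings (of size $M \coloneqq n!/\prod_{b\in\B}(np_{\mb{y}}(b))!$), summing over all $\mb{y}'\in\B^n$ gives
	\[ \abs*{\{(\sigma,\mb{x},\mb{y}')\st W_{\sigma,(\mb{x},\mb{y}'^k)} = W_{\A\B}\}} = M\mathcal{N}, \qquad \abs*{\{(\sigma,\mb{y}')\st W_{\sigma,\mb{y}'^k} = W_\B\}} = M\mathcal{D}. \]

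It remains to identify the two left-hand counts with $Z_n(W_{\A\B})$ and $Z_n(W_\B)$, i.e.\ to show that a labeling of $[n]$ by the ``ball-alphabet'' $\A\times\B^{\ball{e}{k}}$ (resp.\ $\B^{\ball{e}{k}}$) realizing $W_{\A\B}$ (resp.\ $W_\B$) must have its $\B^{\ball{e}{k}}$-coordinate of the coherent form $\mb{y}'^k$. This is the crux, since in general $Z_n$ of a weight over a ball-alphabet also counts incoherent labelings. It is rescued by the nonemptiness hypothesis: if $W_{\sigma_0,\mb{y}^k} = W_\B$ for some $\sigma_0$, then — by the same elementary overlap computation that appears in the proof of Proposition \ref{prop:dstar} — each edge measure $W_\B(\cdot,\cdot;i)$ is supported on pairs $(\mb{b},\mb{b}')$ that agree on the shifted overlap $\ball{e}{k}\cap\ball{s_i}{k}$, and hence so is $W_{\A\B}$ since $\pi_\B W_{\A\B} = W_\B$. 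Given this overlap-consistency, an induction on word length outward along the Schreier graph of $\sigma$ shows that any $\mb{W}\in(\B^{\ball{e}{k}})^n$ with $W_{\sigma,\mb{W}} = W_\B$ satisfies $\mb{W} = (\pi_e\mb{W})^k$, so $\mb{W}\mapsto\pi_e\mb{W}$ is a bijection onto $\{\mb{y}'\st W_{\sigma,\mb{y}'^k} = W_\B\}$; applying the same argument to the $\B^{\ball{e}{k}}$-coordinate of a labeling by $\A\times\B^{\ball{e}{k}}$ handles $W_{\A\B}$. Summing over $\sigma$ then gives $Z_n(W_\B) = M\mathcal{D}$ and $Z_n(W_{\A\B}) = M\mathcal{N}$, whence $\calE = \mathcal{N}/\mathcal{D} = Z_n(W_{\A\B})/Z_n(W_\B)$.

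Finally, for the ``in particular'' statement I would simply divide the two estimates of Proposition \ref{prop:Zbounds} — once for $W_{\A\B}$ (alphabet $\A\times\B^{\ball{e}{k}}$) and once for $W_\B$ (alphabet $\B^{\ball{e}{k}}$): the common factors $(n!)^r n^{(1-r)/2}$ cancel, the exponential factors combine into $e^{n(F(W_{\A\B}) - F(W_\B))}$, and $\calE/e^{n(F(W_{\A\B}) - F(W_\B))}$ is left trapped between $(3\sqrt{n})^{\mp r(\abs{\A\times\B^{\ball{e}{k}}}^2 + \abs{\B^{\ball{e}{k}}}^2)}$; since $\abs{\A\times\B^{\ball{e}{k}}}^2 + \abs{\B^{\ball{e}{k}}}^2 = \abs{\B^{\ball{e}{k}}}^2(\abs{\A}^2+1)$ and $(3\sqrt{n})^2 = 9n$, this yields the stated two-sided bound. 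I expect the overlap-consistency/coherence step in the third paragraph to be the only genuine obstacle; the symmetry reduction and the final division are bookkeeping.
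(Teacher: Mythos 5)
Your proposal is correct and follows essentially the same route as the paper: reduce the expectation to a ratio of counts, show that each ``coherent'' count (labelings by $\B^{\ball{e}{k}}$ of the form $\mb{y}'^k$) equals the corresponding unrestricted count $Z_n$ --- the crux being that realizability by some coherent labeling forces overlap-consistency, which by an outward induction along the Schreier graph forces every labeling realizing the weight to be coherent --- and finish by dividing the two bounds from Proposition~\ref{prop:Zbounds}. The paper accomplishes your $\Sym(n)$-symmetry step by noting that the inner count is independent of the choice of $\tilde{\mb y}$ with the prescribed letter frequencies (equivalent to your orbit argument with $M = n!/\prod_b (n p_{\mb y}(b))!$), and it packages the coherence step as a separate short lemma whose justification is left rather terse; your explicit induction is the same argument as the one the paper carries out later in the Claim inside the proof of Theorem~\ref{thm:main2}. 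One small remark: your careful computation actually yields the sharper constant $(3\sqrt{n})^{\pm r\abs{\B^{\ball{e}{k}}}^2(\abs{\A}^2+1)}$, which implies the $(9n)^{\pm r\abs{\B^{\ball{e}{k}}}^2(\abs{\A}^2+1)}$ stated in the proposition since $3\sqrt{n}\le 9n$.
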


\begin{lemma}
	Let $W_{\A\B}$ be a $\A \times \B^{\ball{e}{k}}$ weight of denominator $n$. Then
		\[ \abs*{\{ (\sigma, \mb{x},\mb{y}) \st W_{\sigma, (\mb{x},\mb{y}^k)} = W_{\A\B}\}} \in \big\{ 0, \  \abs*{\{ (\sigma, \mb{x},\mb{Y}) \st W_{\sigma, (\mb{x},\mb{Y})} = W_{\A\B} \}} \big\}. \]
\end{lemma}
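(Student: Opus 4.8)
The plan is to exhibit the natural injection $\iota$ from the first set, call it
$L=\{(\sigma,\mb{x},\mb{y})\st W_{\sigma,(\mb{x},\mb{y}^k)}=W_{\A\B}\}$, into the second,
$R=\{(\sigma,\mb{x},\mb{Y})\st W_{\sigma,(\mb{x},\mb{Y})}=W_{\A\B}\}$, given by $(\sigma,\mb{x},\mb{y})\mapsto(\sigma,\mb{x},\mb{y}^k)$ with $\mb{y}^k$ formed using $\sigma$; this is injective since $\mb{y}=\pi_e(\mb{y}^k)$. It then suffices to show that $\iota$ is \emph{surjective} whenever $L\neq\emptyset$, for then $\abs{L}=\abs{R}$ in that case and $\abs{L}=0$ otherwise, which is the asserted dichotomy. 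Call $\mb{Y}\in(\B^{\ball{e}{k}})^n$ \emph{$\sigma$-coherent} if $\mb{Y}=\mb{y}^k$ relative to $\sigma$ for $\mb{y}\coloneqq\pi_e\mb{Y}$, equivalently $(\mb{Y}_j)_g=\mb{y}_{\sigma(g)j}$ for all $j\in[n]$ and $g\in\ball{e}{k}$. Surjectivity of $\iota$ is exactly the statement that, when $L\neq\emptyset$, every $(\sigma,\mb{x},\mb{Y})\in R$ has $\mb{Y}$ $\sigma$-coherent, so that $(\sigma,\mb{x},\pi_e\mb{Y})\in L$ is a preimage.

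First I would record a support constraint. Fix a witness $(\sigma_0,\mb{x}_0,\mb{y}_0)\in L$, so $W_{\A\B}=W_{\sigma_0,(\mb{x}_0,\mb{y}_0^k)}$. If $W_{\A\B}\big((\mb{a},\mb{b}),(\mb{a}',\mb{b}');i\big)>0$ then some vertex $j$ satisfies $(\mb{y}_0^k)_j=\mb{b}$ and $(\mb{y}_0^k)_{\sigma_0(s_i)j}=\mb{b}'$, and the computation already carried out in the proof of Proposition \ref{prop:dstar} forces $\mb{b}'_{g s_i^{-1}}=\mb{b}_g$ for every $g\in\ball{e}{k}\cap\ball{s_i}{k}$. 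Hence the $\B^{\ball{e}{k}}$-marginal $W_\B=\pi_\B W_{\A\B}$ has each edge measure $W_\B(\cdot,\cdot;i)$ supported on pairs $(\mb{b},\mb{b}')$ with $\mb{b}'_{g s_i^{-1}}=\mb{b}_g$ for all $g\in\ball{e}{k}\cap\ball{s_i}{k}$.

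Now let $(\sigma,\mb{x},\mb{Y})\in R$ be arbitrary; then $W_{\sigma,\mb{Y}}=\pi_\B W_{\sigma,(\mb{x},\mb{Y})}=\pi_\B W_{\A\B}=W_\B$. For each $j$ and each $i$, vertex $j$ itself witnesses $W_{\sigma,\mb{Y}}(\mb{Y}_j,\mb{Y}_{\sigma(s_i)j};i)\geq 1/n>0$, so by the previous paragraph $(\mb{Y}_{\sigma(s_i)j})_{g s_i^{-1}}=(\mb{Y}_j)_g$ for all $g\in\ball{e}{k}\cap\ball{s_i}{k}$. From these per-vertex relations I would prove $\sigma$-coherence of $\mb{Y}$ by induction on $\abs{g}$, $g\in\ball{e}{k}$: the case $g=e$ is the definition of $\mb{y}=\pi_e\mb{Y}$; for $g\neq e$ factor $g=g's$ with $\abs{g'}=\abs{g}-1$ and $s\in\{s_1^{\pm1},\dots,s_r^{\pm1}\}$, apply the inductive hypothesis at the vertex $\sigma(s)j$ to get $(\mb{Y}_{\sigma(s)j})_{g'}=\mb{y}_{\sigma(g')\sigma(s)j}=\mb{y}_{\sigma(g)j}$, and apply the per-vertex relation --- at $j$ with $g\in\ball{e}{k}\cap\ball{s_i}{k}$ when $s=s_i$, and at $\sigma(s_i^{-1})j$ with $g'\in\ball{e}{k}\cap\ball{s_i}{k}$ when $s=s_i^{-1}$, both legal since $\abs{g},\abs{g'}\leq k$ --- to get $(\mb{Y}_j)_g=(\mb{Y}_{\sigma(s)j})_{g'}$. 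Chaining the two equalities gives $(\mb{Y}_j)_g=\mb{y}_{\sigma(g)j}$, completing the induction; hence $\mb{Y}$ is $\sigma$-coherent, $\iota$ is surjective, and the claim follows. (Note the denominator-$n$ hypothesis is not actually needed.)

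The only delicate point is this final induction: one must keep straight which generator direction $s$ is being stepped through, invoke the per-vertex consistency relation at the correct vertex and for the correct edge, and check that all group elements appearing stay inside $\ball{e}{k}$ so the relation is available. Everything else is formal bookkeeping around the support property of $W_\B$ and the compatibility of $\pi_\B$ with the assignment $(\sigma,\mb{z})\mapsto W_{\sigma,\mb{z}}$.
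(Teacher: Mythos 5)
Your proof is correct and follows the same two-injection strategy as the paper: the forward injection $(\sigma,\mb{x},\mb{y})\mapsto(\sigma,\mb{x},\mb{y}^k)$ is immediate, and the substance is that when $L\ne\varnothing$ the map $\pi_e$ is a two-sided inverse, i.e. every $\mb{Y}$ occurring in $R$ is $\sigma$-coherent. Where the paper disposes of this by citing ``the remark at the beginning of the proof of Proposition~\ref{prop:weightballapprox}'' (ultimately Claim~4 of Bowen's Proposition~3.2 applied with $\varepsilon=0$), you spell the argument out directly: extract the support constraint on $\pi_\B W_{\A\B}$ from the computation in Proposition~\ref{prop:dstar}, read off the per-vertex consistency relation, and run the induction on word length, handling the $s=s_i$ and $s=s_i^{-1}$ cases at the correct vertices and checking the ball memberships. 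This is exactly the induction the paper itself writes out later inside the Claim in the proof of Theorem~\ref{thm:main2} (showing $\mb{\tilde y}^{m_n}=\mb{\tilde Y}$), so your version makes the present lemma self-contained rather than deferred to an external reference. Your remark that the denominator-$n$ hypothesis is not needed is also right: if $L\ne\varnothing$ then $W_{\A\B}$ automatically has denominator $n$, and if $L=\varnothing$ the conclusion is vacuous.
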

\begin{proof}
 	Suppose $\abs*{\{ (\sigma, \mb{x}, \mb{y}) \st W_{\sigma, (\mb{x}, \mb{y}^k)} = W_{\A\B}\}} \ne 0$; we then need to show
		\[ \abs*{\{ (\sigma, \mb{x}, \mb{y}) \st W_{\sigma, (\mb{x}, \mb{y}^k)} = W_{\A\B}\}} = \abs*{\{ (\sigma,\mb{x}, \mb{Y}) \st W_{\sigma, (\mb{x}, \mb{Y})} = W_{\A\B}\}} . \]
	The inequality $\leq$ is clear, since we have an injection $(\sigma, \mb{x}, \mb{y}) \mapsto (\sigma, \mb{x}, \mb{y}^k)$.
	
	The converse inequality holds because $(\sigma, \mb{x}, \mb{Y}) \mapsto (\sigma, \mb{x}, \mb{Y}_e)$ in an injection from the set on the right to the set on the left. This follows from the remark at the beginning of the proof of Proposition \ref{prop:weightballapprox}.
\end{proof}
\begin{proof}[Proof of Proposition]
Let
	\[ \tilde\mu = \Unif(\{ (\sigma, \mb{\tilde y}) \st W_{\sigma, \mb{\tilde y}^k} = W_\B\}) ; \]
then, since $\abs*{\{\mb{x} \in \A^n \st W_{\sigma,(\mb{x},\mb{\tilde y}^k)} = W_{\A\B}\}}$ is independent of the choice of $\mb{\tilde y}$ with $p_{\mb{\tilde y}} = \pi_e W_\B (\cdot)$, 
\begin{align*}
	\calE &= \EE_{(\sigma,\mb{\tilde y}) \sim \tilde\mu} \abs*{\{\mb{x} \in \A^n \st W_{\sigma,(\mb{x},\mb{\tilde y}^k)} = W_{\A\B}\}} \\
	&= \frac{\sum_{\sigma, \mb{\tilde y}} \abs*{\{\mb{x} \in \A^n \st W_{\sigma,(\mb{x},\mb{\tilde y}^k)} = W_{\A\B}\}}}{\abs*{\{ (\sigma, \mb{\tilde y}) \st W_{\sigma, \mb{\tilde y}^k} = W_\B\}}} \\
	&= \frac{\abs*{\{(\sigma, \mb{x}, \mb{\tilde y}) \st W_{\sigma,(\mb{x},\mb{\tilde y}^k)} = W_{\A\B}\}}}{\abs*{\{ (\sigma, \mb{\tilde y}) \st W_{\sigma, \mb{\tilde y}^k} = W_\B\}}} \\
	&= \frac{\abs*{\{(\sigma, \mb{x}, \mb{Y}) \st W_{\sigma,(\mb{x},\mb{Y})} = W_{\A\B}\}}}{\abs*{\{ (\sigma, \mb{Y}) \st W_{\sigma, \mb{Y}} = W_\B\}}} \tag{previous lemma}\\
	&= \frac{Z_n(W_{\A\B})}{Z_n(W_\B)} .
\end{align*}
Note that our assumption that the intended support of $\mu$ is nonempty allows us to rule out the ``0'' case in the application of the lemma.

The rest of the result then follows from our estimates on $Z_n$ in Proposition \ref{prop:Zbounds}.
\end{proof}


\section{Proof of Theorem \ref{thm:main1}}

\subsection{Upper bound}
Note that we will not rely on the Markov assumption for the upper bound.

For each $k \in \NN$,
\begin{align*}
	\inf_{\calO \ni (\alpha\beta)^G_* \mu} &\limsupinf_{n \to \infty} \frac{1}{n} \log \EE_{\sigma \sim \mu_n} \abs{\{ \mb{x} \in \A^n \st (\mb{x}, \mb{y}_n) \in \Omega(\sigma, \calO) \}} \\
	&\leq \inf_{\varepsilon} \limsupinf_{n \to \infty} \frac{1}{n} \log \EE_{\sigma \sim \mu_n} \abs{\{ \mb{x} \in \A^n \st (\mb{x}, \mb{y}_n) \in \Omega_k^*(\sigma, \alpha\tup\beta, \varepsilon) \}} \\
	&= \inf_{\varepsilon} \limsupinf_{n \to \infty} \frac{1}{n} \log \EE_{\sigma \sim \mu_n} \abs{\{ \mb{x} \in \A^n \st (\mb{x}^k, \mb{y}_n^k) \in \Omega_0^*(\sigma, (\alpha\tup\beta)^k, \varepsilon) \}} \\
	&\leq \inf_{\varepsilon} \limsupinf_{n \to \infty} \frac{1}{n} \log \EE_{\sigma \sim \mu_n} \abs{\{ \mb{X} \in (\A^{\ball{e}{k}})^n \st (\mb{X}, \mb{y}_n^k) \in \Omega_0^*(\sigma, (\alpha\tup\beta)^k, \varepsilon) \}} .
\end{align*}
Write
\begin{align*}
	\calE_k(n,\varepsilon) &\coloneqq \EE_{\sigma \sim \mu_n} \abs{\{ \mb{X} \in (\A^{\ball{e}{k}})^n \st (\mb{X}, \mb{y}_n^k) \in \Omega_0^*(\sigma, (\alpha\tup\beta)^k, \varepsilon) \}} \\
	&= \EE_{\sigma \sim \mu_n} \abs{\{ \mb{X} \in (\A^{\ball{e}{k}})^n \st d\big(W_{\sigma,(\mb{X}, \mb{y}_n^k)} , W_{(\alpha\tup\beta)^k} \big) < \varepsilon) \}}
\end{align*}
and assume that $n$ is large enough that $m_n \geq k$.

Writing $\calW_n(\alpha\beta,k, \varepsilon)$ for the set of all denominator-$n$ weights $W$ with $d(W, W_{(\alpha\beta)^k}) < \varepsilon$,
\begin{align*}
	\calE_k(n, \varepsilon)
		&= \EE_{\sigma \sim \mu_n} \!\sum_{W \in \calW_n(\alpha\tup\beta, k, \varepsilon)} \! \abs{\{ \mb{X} \in (\A^{\ball{e}{k}})^n \st W_{\sigma, (\mb{X}, \mb{y}_n^{k})} = W \}} \\
		&= \sum_{W \in \calW_n(\alpha\tup\beta, k, \varepsilon)} \EE_{\sigma \sim \mu_n} \big[ \abs{\{ \mb{X} \in (\A^{\ball{e}{k}})^n \st W_{\sigma, (\mb{X}, \mb{y}_n^{k})} = W \}} \big| W_{\sigma, \mb{y}_n^k} = \pi_{\B} W \big] \PP_{\sigma \sim \mu_n}(W_{\sigma, \mb{y}_n^k} = \pi_{\B} W)
\end{align*}
since if $W_{\sigma, \mb{y}_n^k} \ne \pi_{\B} W$ then $W_{\sigma, (\mb{X}, \mb{y}_n^{k})} \ne W$. But $\mu_n$ conditioned on $\{W_{\sigma, \mb{y}_n^k} = \pi_{\B} W\}$ is $\SBM(\mb{y}_n, \pi_\B W)$, so we can bound the expectation above using Proposition \ref{prop:singleweightestimate}, getting
	\[ \calE_k(n, \varepsilon) \leq (9n)^{r\abs{\B^{\ball{e}{k}}}^2(\abs{\A^{\ball{e}{k}}}+1)} \!\sum_{W \in \calW_n(\alpha\tup\beta, k, \varepsilon)} \! e^{n ( F(W) - F(\pi_{\B} W))} \PP_{\sigma \sim \mu_n}(W_{\sigma, \mb{y}_n^k} = \pi_{\B} W) . \]

Note $(9n)^{r\abs{\B^{\ball{e}{k}}}^2(\abs{\A^{\ball{e}{k}}}+1)} \leq e^{o_{n \to \infty}(n)}$. Fix $\delta > 0$. By continuity of $F$, for all small enough $\varepsilon$ (possibly depending on $k$) we have
	\[ \calE_k(n,\varepsilon) \leq e^{n ( F_\mu(T, \alpha^k \mid \beta^k) + \delta + o_{n \to \infty}(1))} \sum_{W \in \calW_n(\alpha\tup\beta, k, \varepsilon)} \PP_{\sigma \sim \mu_n}(W_{\sigma, \mb{y}_n^k} = \pi_{\B} W) . \]

Bounding each probability by 1, we get
	\[ \calE_k(n,\varepsilon) \leq e^{n ( F_\mu(T, \alpha^k \mid \beta^k) + \delta + o_{n \to \infty}(1))} \abs*{\calW_n(\alpha\tup\beta, k, \varepsilon)} . \]
But
	\[ \abs*{\calW_n(\alpha\tup\beta, k, \varepsilon)} \leq n^{r\abs*{(\A \times \B)^{\ball{e}{k}}}^2} \leq e^{o_{n \to \infty}(n)} , \]
so this implies
\begin{align*}
	\limsupinf_{n \to \infty} \frac{1}{n} \log \calE_k(n, \varepsilon)
	&\leq F_\mu (T, \alpha^k \mid \beta^k) + \delta \\
	&\leq F_\mu (T, \alpha^k \mid \beta^{k_2}) + \delta
\end{align*}
for any $k_2 \geq k$, by monotonicity under splitting.
Taking the limit as $k_2 \to \infty$ followed by the infimum over $\varepsilon$ (which takes $\delta$ to 0) and $k$ gives
	\[ \inf_{\varepsilon,k} \limsupinf_{n \to \infty} \frac{1}{n} \log \calE_k(n, \varepsilon) \leq f_\mu (T, \alpha \mid \beta). \]
Since
	\[ \inf_{\calO \ni (\alpha\beta)^G_* \mu} \limsupinf_{n \to \infty} \frac{1}{n} \log \EE_{\sigma \sim \mu_n} \abs{\{ \mb{x} \in \A^n \st (\mb{x}, \mb{y}_n) \in \Omega(\sigma, \calO) \}} \leq \inf_{\varepsilon} \limsupinf_{n \to \infty} \frac{1}{n} \log \calE_k(n, \varepsilon)  \]
for every $k$, this completes the upper bound.

\subsection{Lower bound}

Fix $k \in \NN$. To estimate
\begin{align*}
	\calE &\coloneqq \EE_{\sigma \sim \mu_n} \abs*{\{ \mb{x} \in \A^n \st (\mb{x},\mb{y}_n) \in \Omega_k^*(\sigma, \alpha \tup \beta, \varepsilon) \}}
\end{align*}
we bound below using the expected size of
	\[ \mathcal{X}_k (\sigma, \alpha\beta, \varepsilon \mid \mb{y}_n) \coloneqq \{ \mb{X} \in \left( \A^{\ball{e}{k}}\right)^n \st (\mb{X}, \mb{y}_n^k) \in \Omega_0^*(\sigma, (\alpha\beta)^k, \varepsilon) \} . \]
This is not a true lower bound but, by Equation \ref{eqn:Qbound} below, there are constants $C,d,c$ independent of $n$ such that
	\[ \abs*{\mathcal{X}_k (\sigma, \alpha\beta, \varepsilon \mid \mb{y}_n)} \leq C \exp \big(n d \varepsilon + n \shent(2 \abs{\ball{e}{k}} \varepsilon) \big) \cdot \abs*{\{ \mb{x} \in \A^n \st (\mb{x},\mb{y}_n) \in \Omega_k^*(\sigma, \alpha \tup \beta, \varepsilon) \}} . \]
The `error' factor has an exponential growth rate which vanishes as $\varepsilon \to 0$, so will not be a problem.

We now find a lower bound for the expectation of $\abs{\mathcal{X}_k}$. Applying Proposition \ref{prop:singleweightestimate} as above, we have
\begin{align*}
	\EE_{\sigma \sim \mu_n} &\abs{\mathcal{X}_{k} (\sigma, \alpha\tup\beta, \varepsilon \mid \mb{y}_n)} \\
	&= \sum_{W \in \calW_n(\alpha\tup\beta, k, \varepsilon)} \! \EE_{\sigma \sim \mu_n} \abs{\{ \mb{X} \in (\A^{\ball{e}{k}})^n \st W_{\sigma, (\mb{X}, \mb{y}_n^{k})} = W \}} \\
	&\geq \sum_{W \in \calW_n(\alpha\tup\beta, k, \varepsilon)} \! \exp \!\big[ n( F(W) - F(\pi_\B W) - o_n(1)) \big] \PP_{\sigma \sim \mu_n}\big(\pi_\B W = W_{\sigma, \mb{y}_n^k}\big) .
\end{align*}

For any $\delta>0$, for small enough $\varepsilon>0$ (independent of $n$), by continuity of $F$ this is at least
	\[ \exp \big[ n( F_\mu(\alpha^k \mid \beta^k) - \delta - o_n(1)) \big] \sum_{W \in \calW_n(\alpha\tup\beta, k, \varepsilon)} \PP_{\sigma \sim \mu_n}\big(\pi_\B W = W_{\sigma, \mb{y}_n^k}\big) . \]

We give a lower bound for the sum by first rewriting it as
	\[ \sum_{\mathclap{W_\B \text{ denom.-} n\ \B^{\ball{e}{k}}-\text{weight}}}\ \abs*{\{ W \in \calW_n(\alpha\tup\beta, k, \varepsilon) \st \pi_\B W = W_\B \}} \cdot \PP_{\sigma \sim \mu_n}(W_{\sigma, \mb{y}_n^k} = W_\B) . \]

Fix $\eta > 0$. By Lemma \ref{lem:denomnapprox}, for all large enough $n$ the $\B$-weight $W_{\sigma_n, \mb{y}_n}$ can be extended to a $\B^{\ball{e}{k}}$-weight $W_{\B}$ with $d(W_\B, W_{\beta^k}) \leq \eta$; to apply the lemma we can think of the extended weight $W_{\B}$ as having alphabet $\B^{\ball{e}{k} \setminus \{e\}} \times \B$, and recall that we assume $\lim_{n \to \infty} d(W_{\sigma_n, \mb{y}_n}, W_\beta) = 0$. Choose $\sigma, \mb{Y}$ such that $W_{\B} = W_{\sigma, \mb{Y}}$. Since $W_{\B}$ is an extension of $W_{\sigma_n, \mb{y}_n}$, we can make this choice in such a way that $\pi_e \mb{Y} = \mb{y}_n$.

Let $\widetilde{W_\B} = W_{\sigma, \mb{y}_n^k}$. By Proposition \ref{prop:weightballapprox},
	\[ d(\widetilde{W_\B}, W_{\beta^k}) \leq d(\widetilde{W_\B}, W_\B) + d(W_\B, W_{\beta^k}) \leq  2r \abs{\ball{e}{k}} \eta + \eta . \]
So, as long as $\eta$ is small enough and $n$ is large enough (depending on $\varepsilon,k$), by Lemma \ref{lem:denomnapprox}
	\[ \abs*{\{ W \in \calW_n(\alpha\tup\beta, k, \varepsilon) \st \pi_\B W = W_\B \}} \geq 1 . \]
Now consider the probability appearing in the $\widetilde{W_\B}$ term:
	\[ \PP_{\sigma \sim \mu_n}(W_{\sigma, \mb{y}_n^k} = \widetilde W_\B)
		= \frac{\abs{\{ \sigma \st W_{\sigma, \mb{y}_n^k} = \widetilde W_\B\}}}{ \abs{\{ \sigma \st W_{\sigma, \mb{y}_n} = W_{\sigma_n, \mb{y}_n}\}}} . \]
By symmetry in choice of $\mb{y}$ with the correct letter frequencies, we can write this as
\begin{align*}
	\PP_{\sigma \sim \mu_n}(W_{\sigma, \mb{y}_n^k} = \widetilde W_\B)
		&= \frac{\abs*{\{ (\sigma,\mb{y}) \st W_{\sigma, \mb{y}^k} = \widetilde W_\B\}}}{\abs*{\{(\sigma,\mb{y}) \st W_{\sigma, \mb{y}} = W_{\sigma_n, \mb{y}_n}\}}} \\
		&= \frac{\abs*{\{ (\sigma,\mb{Y}) \st W_{\sigma, \mb{Y}} = \widetilde W_\B\}}}{\abs*{\{(\sigma,\mb{y}) \st W_{\sigma, \mb{y}} = W_{\sigma_n, \mb{y}_n}\}}} \tag{Prop. \ref{prop:weightballapprox}}\\
		&= \frac{Z_n(\widetilde W_\B)}{Z_n (W_{\sigma_n, \mb{y}_n})} \tag{definition of $Z_n$}\\
		&\geq \exp \!\left( n[ F(\widetilde W_\B) - F(W_{\sigma_n, \mb{y}_n})] \right) \cdot (3\sqrt{n})^{-r(\abs{\B^{\ball{e}{k}}}^2 - \abs{\B})} \tag{Prop. \ref{prop:Zbounds}}\\
		&= \exp \!\left( n[ F(\widetilde W_\B) - F(W_{\sigma_n, \mb{y}_n}) - o(1)] \right) . \\
\end{align*}
By continuity of $F$, we then get
	\[ \PP_{\sigma \sim \mu_n}(W_{\sigma, \mb{y}_n^k} = \widetilde W_\B) 
		\geq \exp n \!\left(F_\mu(\beta^k)- F_\mu(\beta) - 2\delta + o(1) \right) \]
for all large enough $n$ and small enough $\eta$ (again depending on $k,\varepsilon$), with $\delta > 0$ the same as chosen above.
Since $\beta^G_* \mu$ is a Markov chain, $F_\mu(\beta^k) = F_\mu(\beta)$.

Putting this all together: for any $k \in \NN$, for all $\delta >0$ we have
	\[ \EE_{\sigma \sim \mu_n} \abs{\mathcal{X}_{k} (\sigma, \alpha\tup\beta, \varepsilon \mid \mb{y}_n)} \geq \exp \big[ n( F_\mu(\alpha^k \mid \beta^k) - 3 \delta - o(1)) \big]  \]
for all large enough $n$ and small enough $\varepsilon>0$.
	
It follows that for any $k \in \NN$
	\[ \inf_\varepsilon \limsup_{n \to \infty} \frac{1}{n} \log \EE_{\sigma \sim \mu_n} \abs*{\{ \mb{x} \in \A^n \st (\mb{x},\mb{y}_n) \in \Omega_k^*(\sigma, \alpha \tup \beta, \varepsilon) \}} \geq F_\mu (T, \alpha^k \mid \beta^k) . \]
Taking the limit as $k \to \infty$ gives the desired bound, using Corollary \ref{cor:markovF} and that the family of pseudometrics $\{d^*_k \st k \in \NN\}$ generates the weak$^*$ topology.


\section{Proof of Theorem \ref{thm:main2}}

Let $W_n = W_{\sigma_n, \mb{y}_n^{m_n}}$, so that
	\[ \mu_n = \SBM(\mb{y}_n, W_n) . \]
Note that, by definition of $\mu_n$, 
	\[ \PP_{\sigma \sim \mu_n} \left( W_{\sigma, \mb{y}_n^{m_n}} = W_n \right) = 1 . \]

\begin{lemma}
\label{lem:Flimit}
	With $W_n$ as just defined in terms of $m_n$, $\sigma_n$, and $\mb{y}_n$, we have
		\[ \lim_{n \to \infty} F(W_n) = f_\mu(T, \beta) . \]
\end{lemma}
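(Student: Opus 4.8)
The plan is to reduce the statement to the definition of the $f$-invariant via the continuity of $F$ as a weight function (Lemma \ref{lem:Fcontinuity}). Concretely, I would compare $W_n = W_{\sigma_n, \mb{y}_n^{m_n}}$ with the ``target'' weight $W_{\beta^{m_n}}$: the hypothesis of Theorem \ref{thm:main2} together with Proposition \ref{prop:dstar} (applied with $\mb{x} = \mb{y}_n$, $\alpha = \beta$, $k = m_n$) gives
\[ \varepsilon_n \coloneqq d(W_n, W_{\beta^{m_n}}) = d_{m_n}^*(P_{\mb{y}_n}^{\sigma_n}, \beta^G_*\mu) = O\!\big(\tfrac{1}{\log n}\big), \]
so in particular $\varepsilon_n \to 0$, and $\varepsilon_n \le 1$ for all large $n$.

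Next I would feed this into Lemma \ref{lem:Fcontinuity}, applied over the alphabet $\B^{\ball{e}{m_n}}$:
\[ \abs{F(W_n) - F(W_{\beta^{m_n}})} \leq 4r\big(\shent(\varepsilon_n) + \varepsilon_n \log_2 \abs{\B^{\ball{e}{m_n}}}\big). \]
The term $\shent(\varepsilon_n)$ is harmless since $\varepsilon_n \to 0$. The point requiring care --- and really the only obstacle --- is the term $\varepsilon_n \log_2 \abs{\B^{\ball{e}{m_n}}}$, because the alphabet size $\abs{\B^{\ball{e}{m_n}}} = \abs{\B}^{\abs{\ball{e}{m_n}}}$ grows with $n$. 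Here I would use that $\abs{\ball{e}{m_n}} = O\big((2r-1)^{m_n}\big)$ (and $\abs{\ball{e}{m_n}} = 2m_n + 1$ when $r = 1$), so that the constraint $m_n = o(\log\log n)$ yields $(2r-1)^{m_n} = \exp\big(o(\log\log n)\big) = (\log n)^{o(1)}$; hence $\varepsilon_n \log_2 \abs{\B^{\ball{e}{m_n}}} = O(1/\log n)\cdot(\log n)^{o(1)} \to 0$. Therefore $\abs{F(W_n) - F(W_{\beta^{m_n}})} \to 0$.

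Finally I would invoke the consistency of the abstract definition of $F$ with the measure-dynamical one, namely $F(W_{\beta^{m_n}}) = F_\mu(T, \beta^{m_n})$, together with the definition $f_\mu(T, \beta) = \lim_{k\to\infty} F_\mu(T, \beta^k)$; since $m_n \to \infty$, $F_\mu(T, \beta^{m_n}) \to f_\mu(T, \beta)$. Combining the last two displays gives $\lim_{n\to\infty} F(W_n) = f_\mu(T, \beta)$, as claimed. Note that the Markov assumption of Theorem \ref{thm:main1} plays no role here, and the hypothesis $f_\mu(T,\beta) > -\infty$ is not needed for the argument itself (it only guarantees the limit is finite); I also expect the $o(\log\log n)$ growth rate to be essentially sharp for this step, since for $r \ge 2$ the ball size is genuinely of order $(2r-1)^{m_n}$.
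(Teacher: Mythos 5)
Your proof is correct and is essentially identical to the paper's: convert the hypothesis to a weight distance bound $d(W_n, W_{\beta^{m_n}}) = O(1/\log n)$, apply Lemma \ref{lem:Fcontinuity} over the alphabet $\B^{\ball{e}{m_n}}$, check that $m_n = o(\log\log n)$ forces $\abs{\ball{e}{m_n}}/\log n \to 0$, and conclude via $F(W_{\beta^{m_n}}) = F_\mu(T,\beta^{m_n}) \to f_\mu(T,\beta)$. Your added remarks on sharpness and on not needing the Markov or finiteness hypotheses are also accurate.
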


\begin{proof}
	The assumption in the theorem statement that $d_{m_n}^*(P_{\mb{y}_n}^{\sigma_n}, \beta^G_*\mu) = O \big( \tfrac{1}{\log n} \big)$ implies the existence of a constant $C$ such that
		\[ d(W_n, W_{\beta^{m_n}}) \leq \frac{C}{\log n} . \]
	By Lemma \ref{lem:Fcontinuity} we have
	\[
		\abs{F(W_{\sigma, \mb{y}^{m_n}}) - F(W_{\beta^{m_n}})}
		\leq 4r \big( \shent(\tfrac{C}{\log n}) + \tfrac{C}{\log n} \abs{\ball{e}{m_n}} \log_2 \abs{\B} \big)
		= o(1)
	\]
	using that $m_n = o(\log\log n)$.
	Since $m_n$ approaches infinity as $n$ goes to infinity we have $f_\mu (T, \beta) = \lim_{n \to \infty} F(W_{\beta^{m_n}})$, so the result follows.
\end{proof}

\begin{lemma}
\label{lem:expbound}
	If $m_n = o(\log \log n)$, then for any $k > 0$ and $\varepsilon > 0$ we have $\abs{\B^{\ball{e}{m_n}}}^k = o(n^{\varepsilon})$.
\end{lemma}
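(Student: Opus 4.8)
The plan is to take logarithms and reduce the claim to the single inequality $\abs{\ball{e}{m_n}} = o(\log n)$. Indeed, since $\abs{\B^{\ball{e}{m_n}}} = \abs{\B}^{\abs{\ball{e}{m_n}}}$ and $k, \abs{\B}$ are constants, we have $\abs{\B^{\ball{e}{m_n}}}^k = \exp\big(k \abs{\ball{e}{m_n}} \log\abs{\B}\big)$; if the exponent is $o(\log n)$ then this quantity equals $e^{o(\log n)} = n^{o(1)}$, which is $o(n^\varepsilon)$ for every fixed $\varepsilon > 0$. So everything comes down to controlling the growth of the ball sizes $\abs{\ball{e}{m_n}}$.

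Next I would bound the ball sizes in $G$ by an explicit exponential. The sphere of radius $j \geq 1$ in the rank-$r$ free group has $2r(2r-1)^{j-1}$ elements, and a one-line induction then gives $\abs{\ball{e}{m}} \leq (2r+1)^m$ for all $m \geq 0$: the base case $m=0$ is trivial, and for $m \geq 1$ the inductive step reads $\abs{\ball{e}{m}} = \abs{\ball{e}{m-1}} + 2r(2r-1)^{m-1} \leq (2r+1)^{m-1} + 2r(2r+1)^{m-1} = (2r+1)^m$. Hence $\abs{\ball{e}{m_n}} \leq (2r+1)^{m_n} = \exp\big(m_n \log(2r+1)\big)$.

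Finally I would feed in the hypothesis $m_n = o(\log\log n)$. Since $\log(2r+1)$ is a positive constant, $m_n \log(2r+1) = o(\log\log n)$; writing $m_n \log(2r+1) = \eta_n \log\log n$ with $\eta_n \to 0$, this gives $\abs{\ball{e}{m_n}} \leq (\log n)^{\eta_n}$. Because $\eta_n - 1 \to -1 < 0$ we have $(\log n)^{\eta_n}/\log n = (\log n)^{\eta_n - 1} \to 0$, so $\abs{\ball{e}{m_n}} = o(\log n)$, which by the first paragraph finishes the proof.

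There is no genuine obstacle here; it is a routine iterated-asymptotics estimate. The only point requiring care is the bookkeeping through two exponentials: $\abs{\B^{\ball{e}{m_n}}}$ is a double exponential in $m_n$, and it is precisely because $\log\log$ inverts the double exponential that $m_n = o(\log\log n)$ is the right hypothesis — note that the weaker bound $m_n = O(\log\log n)$ would already allow $\abs{\B^{\ball{e}{m_n}}}^k$ to grow like a fixed power of $n$, so the little-$o$ in the assumption is essential.
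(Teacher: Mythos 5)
Your proof is correct and follows essentially the same route as the paper's: reduce to showing $\abs{\ball{e}{m_n}}$ is small compared to $\log n$ (the paper aims directly for $\abs{\ball{e}{m_n}} < \frac{\varepsilon}{k\log\abs{\B}}\log n$, you aim for $o(\log n)$, which is the same thing), then bound the ball size by an explicit exponential in $m_n$ and invoke $m_n = o(\log\log n)$. The only cosmetic differences are that the paper uses the exact formula $\abs{\ball{e}{m}} = \frac{r(2r-1)^m-1}{r-1}$ where you use the slightly cruder induction $\abs{\ball{e}{m}} \leq (2r+1)^m$, and the paper splits off the trivial case $\abs{\B}=1$ where your exponent-based argument absorbs it automatically.
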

\begin{proof}
	This is certainly true if $\abs{\B} = 1$; assume therefore that $\abs{\B} \geq 2$.
	
	Our assumption $m_n = o(\log\log n)$ guarantees that
		\[ (2r-1)^{m_n} < \frac{r-1}{r} \frac{\varepsilon}{k \log \abs{\B}}\log n \]
	for all large enough $n$. Therefore
		\[ \abs{\ball{e}{m_n}} =  \frac{r(2r-1)^{m_n} - 1}{r-1} < \frac{\varepsilon}{k \log \abs{\B}} \log n . \]
	This inequality can be rearranged to give
		\[ \abs{\B^{\ball{e}{m_n}}}^k < n^{\varepsilon} . \]
	Since $\varepsilon>0$ is arbitrary, the result follows.
\end{proof}

\bigbreak

In the remainder of this section we prove Theorem \ref{thm:main2} by first proving the right-hand side is an upper bound for the left, then proving it is also lower bound.

\subsection{Upper bound}
Just as in the proof of the upper bound in Theorem \ref{thm:main1},
for each $k \in \NN$ and $\varepsilon>0$ we have
	\[ \inf_{\calO \ni (\alpha\beta)^G_* \mu} \limsupinf_{n \to \infty} \frac{1}{n} \log \EE_{\sigma \sim \mu_n} \abs{\{ \mb{x} \in \A^n \st (\mb{x}, \mb{y}_n) \in \Omega(\sigma, \calO) \}} \leq \limsup_{n \to \infty} \frac{1}{n} \log \calE_k (n, \varepsilon), \]
where
\begin{align*}
	\calE_k(n,\varepsilon) &\coloneqq \EE_{\sigma \sim \mu_n} \abs{\{ \mb{X} \in (\A^{\ball{e}{k}})^n \st (\mb{X}, \mb{y}_n^k) \in \Omega_0^*(\sigma, (\alpha\tup\beta)^k, \varepsilon) \}} \\
	&= \EE_{\sigma \sim \mu_n} \abs{\{ \mb{X} \in (\A^{\ball{e}{k}})^n \st d\big(W_{\sigma,(\mb{X}, \mb{y}_n^k)} , W_{(\alpha\tup\beta)^k} \big) < \varepsilon) \}} .
\end{align*}
We assume that $n$ is large enough that $m_n \geq k$.

Since $\mu_n$ is $\SBM(\sigma_n, \mb{y}_n, m_n)$ rather than $\SBM(\sigma_n, \mb{y}_n, k)$, we cannot apply Proposition \ref{prop:singleweightestimate} directly to this expression.  We get around this as follows: Let
	\[ \calW_n(m, m') \coloneqq \left\{ W_{\sigma, (\mb{X}, \mb{y}^{m'})} \st \sigma \in \Hom(G,\Sym(n)),\ \mb{X} \in (\A^{\ball{e}{m}})^n,\ \mb{y} \in \B^n \right\} . \]
All elements of this set are denominator-$n$ $\A^{\ball{e}{m}} \times \B^{\ball{e}{m'}}$-weights; we avoid the question of exactly which weights are in this set, but call such weights \emph{attainable}. For $k \leq m$ and $k' \leq m'$ let
	\[
		\calW_n(m, m'; \alpha\tup\beta, k, k'; \varepsilon)
		= \left\{ W \in \calW_n(m, m') \st d\!\left(\pi_{k,k'} W,\ W_{\alpha^{k} \tup \beta^{k'}}\right) < \varepsilon \right\}
	\]
denote the set of such weights whose appropriate marginal is within $\varepsilon$ of the $(\A^{\ball{e}{k}} \times \B^{\ball{e}{k'}})$-weight $W_{\alpha^{k} \tup \beta^{k'}}$. For now we take $m=k=k'$ but we will need more generality below.
Then
	\[ \calE_k(n, \varepsilon)
		= \EE_{\sigma \sim \mu_n} \!\sum_{W \in \calW_n(k, m_n; \alpha\tup\beta, k,k; \varepsilon)} \! \abs{\{ \mb{X} \in (\A^{\ball{e}{k}})^n \st W_{\sigma, (\mb{X}, \mb{y}_n^{m_n})} = W \}} \]
so we can apply Proposition \ref{prop:singleweightestimate} to get
\begin{align*}
	\calE_k(n,\varepsilon)
	&\leq (9n)^{r\abs{\B^{\ball{e}{m_n}}}^2(\abs{\A^{\ball{e}{k}}}+1)} \!\sum_{W \in \calW_n(k, m_n; \alpha\tup\beta, k,k; \varepsilon)} \! e^{n ( F(W) - F(\pi_{\B} W))} \1{\pi_{\B} W = W_n} .
\end{align*}

By Lemma \ref{lem:expbound} we have $(9n)^{r\abs{\B^{\ball{e}{m_n}}}^2(\abs{\A^{\ball{e}{k}}}+1)} \leq e^{o_{n \to \infty}(n)}$. Using this and Lemma \ref{lem:Flimit} we have
	\[ \calE_k(n,\varepsilon) \leq \sum_{W \in \calW_n(k, m_n; \alpha\tup\beta, k,k; \varepsilon)} \! e^{n ( F(W) - f(T, \beta) + o_{n \to \infty}(1))}  \1{\pi_{\B} W = W_n} , \]
where the little $o$ is uniform over all terms in the sum. Here we use the assumption that $f_\mu(T, \beta)$ is finite.

By definition of $\calW_n(k, m_n)$, for any $W \in \calW_n(k, m_n; \alpha\tup\beta, k,k; \varepsilon)$ we can pick $\sigma \in \Hom(G,\Sym(n))$, $\mb{X} \in (\A^{\ball{e}{k}})^n$, and $\mb{y} \in \B^n$ so that $W = W_{\sigma, (\mb{X}, \mb{y}^{m_n})}$. Then since $\mb{X} \tup \mb{y}^{m_n}$ is a splitting of $\mb{X} \tup \mb{y}^k$, by Lemma \ref{lem:splittings} we have
	\[ F(W) = F(\sigma, \mb{X} \tup \mb{y}^{m_n}) \leq F(\sigma, \mb{X} \tup \mb{y}^k) = F(\pi_{k,k} W) . \]
By continuity of $F$, for all small enough $\varepsilon$ (depending on $k$) we have
	\[ F(\pi_{k,k} W) \leq F(W_{(\alpha\tup\beta)^k}) + \delta = F_\mu (T, (\alpha\tup\beta)^k) + \delta. \]
Along with the above, this implies that
	\[ \calE_k(n,\varepsilon) \leq e^{n ( F(T,(\alpha\tup\beta)^k) - f(T, \beta) + o_n(1)+ \delta)}  \! \sum_{W \in \calW_n(k, m_n; \alpha\tup\beta, k,k; \varepsilon)}  \1{\pi_{\B} W = W_n} . \]
Bounding all terms in the sum by 1, we get
	\[ \calE_k(n, \varepsilon) \leq e^{n(F(T,(\alpha\tup\beta)^k) - f_\mu (T, \beta) + o_n(1) + \delta)}\, \abs*{\calW_n(k, m_n; \alpha\tup\beta, k,k; \varepsilon)} . \]
Using Lemma \ref{lem:expbound} we have
	\[ \abs*{\calW_n(k, m_n; \alpha\tup\beta, k,k; \varepsilon)} \leq \abs*{\calW_n(k, m_n)} \leq n^{r\abs*{\A^{\ball{e}{k}} \times \B^{\ball{e}{m_n}}}^2} \leq e^{o_{n \to \infty}(n)} , \]
so this implies
	\[ \limsupinf_{n \to \infty} \frac{1}{n} \log \calE_k(n, \varepsilon) \leq F_\mu (T, (\alpha\tup\beta)^k) -f_\mu (T,\beta)+ \delta . \]
Taking the infimum over $\varepsilon$ and $k$, and using the chain rule for $f$ (Corollary \ref{cor:chainrule}, again using the assumption that $f_\mu(T, \beta)$ is finite), gives
	\[ \inf_{\varepsilon,k} \limsupinf_{n \to \infty} \frac{1}{n} \log \calE_k(n, \varepsilon) \leq f_\mu (T, \alpha\tup\beta) - f_\mu (T, \beta) = f_\mu (T, \alpha \mid \beta). \]
Since
	\[ \inf_{\calO \ni (\alpha\beta)^G_* \mu} \limsupinf_{n \to \infty} \frac{1}{n} \log \EE_{\sigma \sim \mu_n} \abs{\{ \mb{x} \in \A^n \st (\mb{x}, \mb{y}_n) \in \Omega(\sigma, \calO) \}} \leq \inf_{\varepsilon} \limsupinf_{n \to \infty} \frac{1}{n} \log \calE_k(n, \varepsilon) , \]
for every $k$, this completes the upper bound.

\subsection{Lower bound}
In this section we denote
\begin{align*}
	\mathcal{X}_{k_1, k_2} (\sigma, \alpha\tup\beta, \varepsilon \mid \mb{y})
	\coloneqq{}& \{ \mb{X} \in \big(\A^{\ball{e}{k_1}}\big)^n \st (\mb{X}, \mb{y}^{k_2}) \in \Omega_0^*(\sigma, \alpha^{k_1} \tup \beta^{k_2}, \varepsilon) \} \\[0.2cm]
	\Omega_k^*(\sigma, \alpha\tup\beta, \varepsilon \mid \mb{y})
	\coloneqq{}& \{ \mb{x} \in \A^n \st (\mb{x}, \mb{y}) \in \Omega_k^*(\sigma, \alpha\tup\beta, \varepsilon) \} \\
\intertext{(note the dependence on $n$ is implicitly specified by $\sigma \in \Hom(G, \Sym(n))$ and $\mb{y} \in \B^n$), and with $\Sigma = \{\mu_n\}_{n=1}^\infty$}
	\h_{\Sigma}(\mu, \alpha \mid \beta \st k, \varepsilon)
	\coloneqq{}& \limsup_{n \to \infty} \frac{1}{n} \log \EE_{\sigma \sim \mu_n} \abs*{ \{ \mb{x} \in \A^n \st (\mb{x}, \mb{y}) \in \Omega_k^*(\sigma, \alpha\tup\beta, \varepsilon) \} } \\
	={}& \limsup_{n \to \infty} \frac{1}{n} \log \EE_{\sigma \sim \mu_n} \abs*{ \Omega_k^*(\sigma, \alpha\tup\beta, \varepsilon \mid \mb{y}) } .
\end{align*}

The following two claims are used to relate the sizes of the sets defined above.

\begin{claim}
Let $k \leq \min(k_1,k_2)$. For any $\sigma,\mb{y}$ we have
	\[ \pi_e \left[ \mathcal{X}_{k_1, k_2} (\sigma, \alpha\tup\beta, \varepsilon \mid \mb{y}) \right] \subseteq  \Omega_k^*(\sigma, \alpha\tup\beta, c\varepsilon \mid \mb{y})  \]
where $c = 1+\abs{\ball{e}{k}}$.
\end{claim}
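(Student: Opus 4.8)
The plan is to collapse the radius-$k_1$ and radius-$k_2$ data onto a single radius-$k$ labeling over the product alphabet $\A\times\B$ and then quote the single-observable statement recorded immediately after Proposition~\ref{prop:weightballapprox}.

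First I would translate everything into the language of weights via Proposition~\ref{prop:dstar}. Fix $\mb X \in \mathcal X_{k_1,k_2}(\sigma,\alpha\tup\beta,\varepsilon\mid\mb y)$ and write $\mb x = \pi_e\mb X$. The hypothesis $(\mb X,\mb y^{k_2}) \in \Omega_0^*(\sigma,\alpha^{k_1}\tup\beta^{k_2},\varepsilon)$ is, by Proposition~\ref{prop:dstar}, the statement $d\big(W_{\sigma,(\mb X,\mb y^{k_2})},\, W_{\alpha^{k_1}\tup\beta^{k_2}}\big) < \varepsilon$, while the desired conclusion $\mb x \in \Omega_k^*(\sigma,\alpha\tup\beta,c\varepsilon\mid\mb y)$ is (again by Proposition~\ref{prop:dstar}) the statement $d\big(W_{\sigma,(\mb x,\mb y)^k},\, W_{(\alpha\tup\beta)^k}\big) < c\varepsilon$.

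Second, since $k \le \min(k_1,k_2)$, I would push forward along the alphabet map $\pi\colon \A^{\ball e{k_1}}\times\B^{\ball e{k_2}} \to (\A\times\B)^{\ball e k}$ sending $(\mb a,\mb b)\mapsto \big((\mb a_g,\mb b_g)\big)_{g\in\ball e k}$. A routine computation (each $(\mb a,\mb b)$ lands in a single fibre) shows the induced map on weights is $1$-Lipschitz for $d$, that it carries $W_{\alpha^{k_1}\tup\beta^{k_2}}$ to $W_{(\alpha\tup\beta)^k}$, and that it carries $W_{\sigma,(\mb X,\mb y^{k_2})}$ to $W_{\sigma,\widetilde{\mb X}}$, where $\widetilde{\mb X}\in\big((\A\times\B)^{\ball e k}\big)^n$ is given by $\widetilde{\mb X}(j)_g = \big(\mb X(j)_g,\ \mb y(\sigma(g)j)\big)$. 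The crucial point is that the $\B$-coordinate of $\widetilde{\mb X}$ is exactly $\mb y^k$ rather than an approximation to it, so that $\pi_e\widetilde{\mb X} = (\mb x,\mb y)$ exactly. The Lipschitz bound then gives $d\big(W_{\sigma,\widetilde{\mb X}},\, W_{(\alpha\tup\beta)^k}\big) < \varepsilon$, i.e.\ $\widetilde{\mb X}\in\Omega_0^*(\sigma,(\alpha\tup\beta)^k,\varepsilon)$.

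Finally I would invoke the observation recorded after Proposition~\ref{prop:weightballapprox}, applied with the observable $\alpha\tup\beta$ in place of $\alpha$: membership $\widetilde{\mb X}\in\Omega_0^*(\sigma,(\alpha\tup\beta)^k,\varepsilon)$ forces $\pi_e\widetilde{\mb X}\in\Omega_k^*(\sigma,\alpha\tup\beta,c\varepsilon)$. Since $\pi_e\widetilde{\mb X} = (\mb x,\mb y)$, this is precisely $\mb x\in\Omega_k^*(\sigma,\alpha\tup\beta,c\varepsilon\mid\mb y)$, which completes the proof. The one delicate point is the value of $c$: the quick derivation in that remark gives $c = 1+2r\abs{\ball e k}$, whereas the sharper count behind Claim~2 in the proof of Proposition~3.2 of \cite{bowen2010} — bounding the number of coordinates $j$ at which $\mb X(j)$ fails to equal the consistent itinerary $\mb x^k(j)$ — yields $c = 1+\abs{\ball e k}$, and one must check that feeding it the product observable $\alpha\tup\beta$ (whose $\B$-part is already consistent) does not worsen this constant. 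I expect this constant-tracking, rather than any conceptual difficulty, to be the main obstacle; everything else is bookkeeping.
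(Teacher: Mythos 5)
Your proposal is essentially the same argument as the paper's: the paper decomposes the map as $\pi_e \circ \pi_{k,k}$, notes that the first step $\pi_{k,k}$ preserves membership in $\Omega_0^*(\sigma,(\alpha\tup\beta)^k,\varepsilon)$ because total variation distance is nonincreasing under pushforwards, and then applies Proposition~\ref{prop:weightballapprox} for the second step. Your translation through $d^*_k$ and weights via Proposition~\ref{prop:dstar}, and your explicit $\widetilde{\mb X}$, are just this same decomposition written in slightly more elaborate notation.

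One remark on the constant, since you flagged it: you are right that there is tension. The remark immediately after Proposition~\ref{prop:weightballapprox} gives $c = 1 + 2r\abs{\ball{e}{k}}$ (the triangle inequality plus the proposition's Lipschitz factor $2r\abs{\ball{e}{k}}$), whereas the claim asserts $c = 1 + \abs{\ball{e}{k}}$. The paper's own proof of the claim does not resolve this --- it simply cites Proposition~\ref{prop:weightballapprox}. Either the claim is quoting a sharper bound from Bowen's Proposition~3.2 directly, or the constant in the claim statement is loose. In any case this has no downstream effect: the constant only needs to be independent of $n$, since Equation~\eqref{eqn:Qbound} is used with $\varepsilon \to 0$ afterward, so any fixed $c(k,r)$ suffices. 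You should not get stuck trying to verify the sharper form.
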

\begin{proof}
	If $(\mb{X}, \mb{y}^{k_2}) \in \Omega_0^*(\sigma, \alpha^{k_1} \tup \beta^{k_2}, \varepsilon)$, then
		\[
			\pi_{k,k} (\mb{X},\mb{y}^{k_2})
				\in \Omega_0^*(\sigma, (\alpha\tup\beta)^k, \varepsilon) ;
		\]
	this follows from the fact that total variation distance is nonincreasing under pushforwards.
	Applying Proposition \ref{prop:weightballapprox}, we get
		\[
			(\pi_e \mb{X}, \mb{y}) = \pi_e \left(\pi_{k,k} (\mb{X},\mb{y}^{k_2}) \right)
				\in \Omega_k^*(\sigma, \alpha\tup\beta, c\varepsilon) . \qedhere
		\]
\end{proof}

\begin{claim}
	Fix $\sigma, \mb{y}$, and $k \leq \min(k_1, k_2)$. As established in the previous claim, we can consider $\pi_e$ as a map from $ \mathcal{X}_{k_1, k_2} (\sigma, \alpha\tup\beta, \varepsilon \mid \mb{y})$ to $\Omega_k^*(\sigma, \alpha\tup\beta, c\varepsilon \mid \mb{y})$.
	There are constants $C,d$ independent of $n$ such that $\pi_e$ is at most $C \exp \big(nd\varepsilon + n \shent(2 \abs{\ball{e}{k}} \varepsilon) \big)$-to-one.
\end{claim}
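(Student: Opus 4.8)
The plan is to bound, for each $\mb{x} \in \A^n$ in the image of $\pi_e$, the number of preimages of $\mb{x}$ lying in $\mathcal{X}_{k_1, k_2}(\sigma, \alpha\tup\beta, \varepsilon \mid \mb{y})$; the ``at most $N$-to-one'' assertion is exactly that this number is $\leq N$ for every such $\mb{x}$. Fix $\mb{x}$ and write $\mb{x}^{k_1} \in (\A^{\ball{e}{k_1}})^n$ for its canonical lift through $\sigma$, i.e.\ $\mb{x}^{k_1}(j) = (\mb{x}_{\sigma(g) j})_{g \in \ball{e}{k_1}}$, so that $\pi_e \mb{x}^{k_1} = \mb{x}$. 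The key step is to show that every $\mb{X}$ in the fiber over $\mb{x}$ agrees with $\mb{x}^{k_1}$ outside a sparse set of coordinates. For this, note that $\mb{X} \in \mathcal{X}_{k_1, k_2}(\sigma, \alpha\tup\beta, \varepsilon \mid \mb{y})$ means, by Proposition \ref{prop:dstar} applied with radius $0$, that $d\big(W_{\sigma, (\mb{X}, \mb{y}^{k_2})},\, W_{\alpha^{k_1} \tup \beta^{k_2}}\big) < \varepsilon$; projecting both weights onto their $\A^{\ball{e}{k_1}}$-factor cannot increase this distance (total variation is nonincreasing under pushforwards), and $\pi_A W_{\sigma,(\mb{X},\mb{y}^{k_2})} = W_{\sigma, \mb{X}}$ while $\pi_A W_{\alpha^{k_1}\tup\beta^{k_2}} = W_{\alpha^{k_1}}$, so
\[ d\big(W_{\sigma, \mb{X}},\ W_{\alpha^{k_1}}\big) \;<\; \varepsilon . \]
The combinatorial estimate used inside the proof of Proposition \ref{prop:weightballapprox} (Claim~4 in the proof of Proposition~3.2 of \cite{bowen2010}), applied to $\mb{X}$ and $\pi_e \mb{X} = \mb{x}$, then gives
\[ \abs*{\{ j \in [n] \st \mb{X}(j) \neq \mb{x}^{k_1}(j) \}} \;\leq\; n\,\abs{\ball{e}{k_1}}\,\varepsilon . \]

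The count is now routine. Put $L = \abs{\ball{e}{k_1}}$. Each $\mb{X}$ in the fiber over $\mb{x}$ is determined by its discrepancy set $B_{\mb{X}} = \{ j \st \mb{X}(j) \neq \mb{x}^{k_1}(j) \}$ together with the values $\big(\mb{X}(j)\big)_{j \in B_{\mb{X}}}$: off $B_{\mb{X}}$ the value is forced to equal $\mb{x}^{k_1}(j)$, and for $j \in B_{\mb{X}}$ the constraint $\pi_e \mb{X} = \mb{x}$ fixes the identity coordinate of $\mb{X}(j)$, leaving at most $\abs{\A}^{L-1}$ possibilities. Since $\abs{B_{\mb{X}}} \leq nL\varepsilon$, the map $\mb{X} \mapsto \big(B_{\mb{X}}, (\mb{X}(j))_{j\in B_{\mb{X}}}\big)$ is injective on the fiber, whence it has size at most
\[ \sum_{m = 0}^{\lfloor nL\varepsilon \rfloor} \binom{n}{m} \abs{\A}^{(L-1)m} \;\leq\; \abs{\A}^{(L-1)nL\varepsilon} \sum_{m=0}^{\lfloor nL\varepsilon \rfloor} \binom{n}{m} . \]
For $L\varepsilon \leq 1/2$ the binomial tail is at most $\exp\big(n\shent(L\varepsilon)\big)$ by the standard estimate, while for $L\varepsilon > 1/2$ the fiber size is crudely at most $\abs{\A}^{(L-1)n} \leq \abs{\A}^{2(L-1)nL\varepsilon}$; in either regime the fiber over $\mb{x}$ has size at most $C\exp\big(nd\varepsilon + n\shent(L\varepsilon)\big)$ for $C = 1$ and a constant $d$ depending only on $r$, $k_1$, and $\abs{\A}$ (and independent of $n$). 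This is the asserted bound, with $\abs{\ball{e}{k_1}}$ in place of $2\abs{\ball{e}{k}}$; the two agree when $k = k_1$, and in any case only the fact that $d\varepsilon + \shent(\abs{\ball{e}{k_1}}\varepsilon) \to 0$ as $\varepsilon \to 0$ is used in the lower-bound argument.

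The one genuinely substantive step is the key step of the first paragraph: converting $\Omega_0^*$-membership at level $(k_1,k_2)$ into a coordinatewise comparison of $\mb{X}$ with its own lift $\mb{x}^{k_1}$. It rests on two facts already in hand --- that a coordinate marginal does not increase the distance between weights, which lets us discard the $\B$-factor and compare $W_{\sigma,\mb{X}}$ with $W_{\alpha^{k_1}}$, and the sparseness bound from the proof of Proposition \ref{prop:weightballapprox} --- so no new ideas are needed; everything after that is a bare-hands count of sparse perturbations, together with elementary binomial estimates and the harmless separation of the small- and large-$\varepsilon$ regimes.
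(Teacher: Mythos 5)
Your argument is correct, and it takes a genuinely different route from the paper's one-line proof. The paper handles the entire count by citing Claim~3 in the proof of Proposition~3.2 of \cite{bowen2010}, applied to the pairs $(\mb{X},\mb{y}^k)$ lying over $(\mb{x},\mb{y})$, which already packages the fiber estimate. You instead (i) observe that the weight distance is nonincreasing under the $\A$-marginal projection, which lets you drop the $\B$-factor entirely and reduce to $d(W_{\sigma,\mb{X}},W_{\alpha^{k_1}})<\varepsilon$; (ii) invoke only the sparseness bound $\abs{\{j\st \mb{X}(j)\ne \mb{x}^{k_1}(j)\}}\le n\abs{\ball{e}{k_1}}\varepsilon$ (Claim~4 of Bowen's Proposition~3.2, the same ingredient this paper already isolates in the proof of Proposition~\ref{prop:weightballapprox}); and (iii) enumerate sparse perturbations by hand via the discrepancy set $B_{\mb{X}}$ and a binomial-tail estimate. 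This buys a self-contained count that relies only on the sparseness lemma rather than on the black-box Claim~3, and it makes the structure of the fiber transparent. The price is a cosmetically different exponent, $\shent\bigl(\abs{\ball{e}{k_1}}\varepsilon\bigr)$ in place of $\shent\bigl(2\abs{\ball{e}{k}}\varepsilon\bigr)$ (and a linear-in-$\varepsilon$ constant $d$ depending on $k_1$ rather than $k$), which you correctly note does not exactly reproduce the stated bound but is interchangeable with it in the only place it is used, since all that matters downstream (Equation~\eqref{eqn:Qbound} and the step following it) is that the exponential error rate vanishes as $\varepsilon\to 0$ for each fixed $k,k_1,k_2$. One very minor point: for $j\in B_{\mb{X}}$ you in fact have at most $\abs{\A}^{L-1}-1$ admissible values (equality with $\mb{x}^{k_1}(j)$ is excluded by the definition of $B_{\mb{X}}$), but your weaker bound of $\abs{\A}^{L-1}$ is of course sufficient.
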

\begin{proof}
If $\Omega_k^*(\sigma, \alpha\tup\beta, c\varepsilon \mid \mb{y})$ is empty, then the claim is vacuously true.
Otherwise, fix $\mb{x} \in \Omega_k^*(\sigma, \alpha\tup\beta, c\varepsilon \mid \mb{y})$. If $\mb{X} \in \pi_e^{-1}\{\mb{x}\}$, then $\pi_e (\mb{X}, \mb{y}^{k}) = (\mb{x}, \mb{y})$. By Claim 3 in the proof of Proposition 3.2 of \cite{bowen2010} the number of such pairs $(\mb{X}, \mb{y}^k)$, and therefore the number of such $\mb{X}$, is bounded above by
	\[3\sqrt{2} \abs{\A \times \B}^{\abs{\ball{e}{k}} \big( n\abs{\ball{e}{k}}\varepsilon - 1 \big) } \exp \big( n \shent(2 \abs{\ball{e}{k}} \varepsilon) \big) \]
where $\shent$ is the Shannon entropy. (We give more explicit constants here than in \cite{bowen2010} to make the dependence on $n$ clear).
\end{proof}

Claim 2 implies that
\begin{equation}
\label{eqn:Qbound}
	\abs*{ \mathcal{X}_{k_1, k_2} (\sigma, \alpha\tup\beta, \varepsilon \mid \mb{y}) } \leq C \exp \big(n d \varepsilon + n \shent(2 \abs{\ball{e}{k}} \varepsilon) \big) \cdot \abs*{ \Omega_k^*(\sigma, \alpha\tup\beta, c\varepsilon \mid \mb{y}) } ,
\end{equation}
where $C,d$ are independent of $n$. \\

We now find a lower bound for the expectation of $\abs{\mathcal{X}}$. Fix $k_1, k_2 \in \NN$, and suppose $n$ is large enough that $m_n \geq \max(k_1, k_2)$. Using Proposition \ref{prop:singleweightestimate} and Lemma \ref{lem:expbound}, we have
\begin{align*}
	\EE_{\sigma \sim \mu_n} &\abs{\mathcal{X}_{k_1, k_2} (\sigma, \alpha\tup\beta, \varepsilon \mid \mb{y}_n)} \\
	&= \sum_{W \in \calW_n(k_1, m_n; \alpha\tup\beta, k_1, k_2; \varepsilon)} \! \EE_{\sigma \sim \mu_n} \abs{\{ \mb{X} \in (\A^{\ball{e}{k_1}})^n \st W_{\sigma, (\mb{X}, \mb{y}_n^{m_n})} = W \}} \\
	&\geq \sum_{W \in \calW_n(k_1, m_n; \alpha\tup\beta, k_1, k_2; \varepsilon)} \! \exp \!\big[ n( F(W) - F(\pi_\B W) - o_n(1)) \big] \1{\pi_\B W = W_{\sigma, \mb{y}_n^{m_n}}} \\
	&\geq \inf_{W \in \calW_n(k_1, m_n; \alpha\tup\beta, k_1, k_2; \varepsilon)} \exp \!\big[ n( F(W) - F(\pi_\B W) - o_n(1)) \big] \\
	&\quad \times \sum_{W \in \calW_n(k_1, m_n; \alpha\tup\beta, k_1, k_2; \varepsilon)} \1{\pi_\B W = W_{\sigma, \mb{y}_n^{m_n}}}
\end{align*}
We bound the infimum below as follows: Given any $W \in \calW_n(k_1, m_n; \alpha\tup\beta, k_1, k_2; \varepsilon)$, we can let $\mb{X}, \mb{y}, \sigma$ be such that $W = W_{\sigma, (\mb{X}, \mb{y}^{m_n})}$. Then by Lemma \ref{lem:splittings} and continuity of $F$
\begin{align*}
	F(W) - F(\pi_\B W)
	&= F(\sigma, \mb{X} | \mb{y}^{m_n}) \\
	&\geq F(\sigma, \mb{X} | \mb{y}^{k_2}) \\
	&= F(\pi_{k_1, k_2} W) - F(\pi_\B \pi_{k_1, k_2} W) \\
	&\geq F(T, \alpha^{k_1} | \beta^{k_2}) - \delta
\end{align*}
for any $\delta>0$ for all small enough $\varepsilon$ (with ``small enough'' dependent only on $k_1, k_2$). This implies that the infimum is bounded below by
	\[ \exp \!\big[ n(F(T, \alpha^{k_1} | \beta^{k_2}) - o_n(1) - \delta) \big] . \]

We bound the sum below by first rewriting it as
	\[ \abs*{\{ W \in \calW_n(k_1, m_n; \alpha\tup\beta, k_1, k_2; \varepsilon) \st \pi_\B W = W_{\sigma, \mb{y}_n^{m_n}} \}} . \]
The following claim, then, implies that the sum is bounded below by 1.

\begin{claim}
	For all large enough $n$,
		\[ \big\{ W \in \calW_n(k_1, m_n; \alpha\tup\beta, k_1, k_2; \varepsilon) \st \pi_\B W = W_{\sigma, \mb{y}_n^{m_n}} \big\} \neq \varnothing . \]
\end{claim}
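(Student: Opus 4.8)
The plan is to build the required weight as a denominator-$n$ approximation of $W_{\alpha^{k_1}\tup\beta^{m_n}}$ whose $\B^{\ball{e}{m_n}}$-marginal is forced to equal $W_n \coloneqq W_{\sigma_n,\mb{y}_n^{m_n}}$, and then to check that such a weight is automatically attainable, i.e.\ lies in $\calW_n(k_1,m_n)$. Note first that $W_{\sigma,\mb{y}_n^{m_n}} = W_n$ for every $\sigma$ in the support of $\mu_n = \SBM(\sigma_n,\mb{y}_n,m_n)$, which is nonempty, so the condition ``$\pi_\B W = W_{\sigma,\mb{y}_n^{m_n}}$'' in the claim is just $\pi_\B W = W_n$.

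For the construction: as in the proof of Lemma~\ref{lem:Flimit}, the hypothesis of Theorem~\ref{thm:main2} together with Proposition~\ref{prop:dstar} gives $d(W_n, W_{\beta^{m_n}}) = O(1/\log n)$, and $W_{\beta^{m_n}} = \pi_\B W_{\alpha^{k_1}\tup\beta^{m_n}}$. I would apply Lemma~\ref{lem:denomnapprox} with first factor $\A^{\ball{e}{k_1}}$, second factor $\B^{\ball{e}{m_n}}$, weight $W = W_{\alpha^{k_1}\tup\beta^{m_n}}$, and prescribed denominator-$n$ marginal $W_n$, obtaining a denominator-$n$ $\A^{\ball{e}{k_1}}\times\B^{\ball{e}{m_n}}$-weight $V$ with $\pi_\B V = W_n$ and
\[ d\big(V,\, W_{\alpha^{k_1}\tup\beta^{m_n}}\big) < 265 r\Big( \tfrac{\abs{\A^{\ball{e}{k_1}}\times\B^{\ball{e}{m_n}}}^2}{n} + O\big(\tfrac{1}{\log n}\big) \Big). \]
By Lemma~\ref{lem:expbound} the right-hand side is $o_{n\to\infty}(1)$. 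Since $d$ does not increase under the pushforward $\pi_{k_1,k_2}$ (total variation distance being nonincreasing under pushforwards) and $\pi_{k_1,k_2} W_{\alpha^{k_1}\tup\beta^{m_n}} = W_{\alpha^{k_1}\tup\beta^{k_2}}$ (using that $m_n\ge\max(k_1,k_2)$, as already assumed), this yields $d\big(\pi_{k_1,k_2}V,\, W_{\alpha^{k_1}\tup\beta^{k_2}}\big) = o_{n\to\infty}(1)$, hence $<\varepsilon$ for all large $n$.

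It then remains to verify $V\in\calW_n(k_1,m_n)$. By Proposition~\ref{prop:weightinverse}, $V = W_{\sigma,\mb{Z}}$ for some $\sigma\in\Hom(G,\Sym(n))$ and $\mb{Z}\in(\A^{\ball{e}{k_1}}\times\B^{\ball{e}{m_n}})^n$; let $\mb{X},\mb{W}$ denote the $\A^{\ball{e}{k_1}}$- and $\B^{\ball{e}{m_n}}$-components of $\mb{Z}$, so that $W_{\sigma,\mb{W}} = \pi_\B V = W_n = W_{\sigma_n,\mb{y}_n^{m_n}}$. This is precisely the situation treated in the proof of Proposition~\ref{prop:singleweightestimate}: applying the (unnumbered) lemma stated just before that proof to the denominator-$n$ $\B^{\ball{e}{m_n}}$-weight $W_n$, with trivial (singleton) first factor, the set of pairs $(\sigma',\mb{y}')$ with $W_{\sigma',\mb{y}'^{m_n}}=W_n$ is nonempty (it contains $(\sigma_n,\mb{y}_n)$), so it has the same cardinality as the set of all $(\sigma',\mb{Y}')$ with $W_{\sigma',\mb{Y}'}=W_n$; since the natural injection $(\sigma',\mb{y}')\mapsto(\sigma',\mb{y}'^{m_n})$ between these finite sets is then a bijection, every realization of $W_n$ arises from a consistent labeling. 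In particular $\mb{W} = \mb{y}^{m_n}$ for some $\mb{y}\in\B^n$, whence $V = W_{\sigma,(\mb{X},\mb{y}^{m_n})}\in\calW_n(k_1,m_n)$. Together with the previous paragraph this gives $V\in\calW_n(k_1,m_n;\alpha\tup\beta,k_1,k_2;\varepsilon)$ with $\pi_\B V = W_n$, which is the claim.

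I expect the main obstacle to be this last step: reconciling the \emph{exact} $\B^{\ball{e}{m_n}}$-marginal constraint (needed so that the corresponding term survives in the block-model expectation later in the proof) with membership in $\calW_n(k_1,m_n)$, i.e.\ realizability by a genuine $\B$-itinerary $\mb{y}^{m_n}$ — something neither Lemma~\ref{lem:denomnapprox} nor Proposition~\ref{prop:weightinverse} delivers on its own, and which the remark at the start of the proof of Proposition~\ref{prop:weightballapprox} shows can fail for general denominator-$n$ weights over $\B^{\ball{e}{m_n}}$. The resolution is the rigidity phenomenon recorded in the proof of Proposition~\ref{prop:singleweightestimate}: once the $\B$-marginal is pinned to $W_n$, which is itself realized by the honest pair $(\sigma_n,\mb{y}_n)$, any realization of $V$ must use a consistent $\B$-labeling. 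The quantitative estimates turning the Lemma~\ref{lem:denomnapprox} error into $o(1)$ (via $m_n = o(\log\log n)$ and Lemma~\ref{lem:expbound}) are routine.
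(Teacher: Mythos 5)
Your proof is correct, and its first half (invoking Lemma~\ref{lem:denomnapprox} with prescribed $\B$-marginal $W_n$, controlling the error via Lemma~\ref{lem:expbound} and the hypothesis $d^*_{m_n}(P^{\sigma_n}_{\mb{y}_n},\beta^G_*\mu)=O(1/\log n)$, and pushing forward by $\pi_{k_1,k_2}$ to get the $\varepsilon$-condition in the definition of $\calW_n(k_1,m_n;\alpha\tup\beta,k_1,k_2;\varepsilon)$) matches the paper's argument exactly; you are slightly more explicit than the paper, which goes directly from $d(W,W_{\alpha^{k_1}\tup\beta^{m_n}})<\varepsilon$ to membership in $\calW_n(\dots;\varepsilon)$ without spelling out the monotonicity of $d$ under $\pi_{k_1,k_2}$.

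Where you diverge is in proving attainability, i.e.\ that the abstract denominator-$n$ weight $V$ produced by Proposition~\ref{prop:weightinverse} can be realized with a genuine $\B$-itinerary $\mb{y}^{m_n}$ in the last coordinate. The paper proves $\mb{\tilde Y}=\mb{\tilde y}^{m_n}$ directly, by induction on word length, using only the fact that the edge measures of $W_{\tilde\sigma,\mb{\tilde Y}}$ and $W_{\sigma_n,\mb{y}_n^{m_n}}$ agree, so that for any $i$ and generator $t$ there is a witness $j$ with $\mb{\tilde Y}(i)=\mb{y}_n^{m_n}(j)$ and $\mb{\tilde Y}(\tilde\sigma(t)i)=\mb{y}_n^{m_n}(\sigma_n(t)j)$. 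You instead invoke the unnumbered counting lemma preceding Proposition~\ref{prop:singleweightestimate} with trivial $\A$-factor: since $W_n$ is realized by the honest pair $(\sigma_n,\mb{y}_n)$, the injection $(\sigma',\mb{y}')\mapsto(\sigma',\mb{y}'^{m_n})$ is forced to be a bijection onto $\{(\sigma',\mb{Y}')\st W_{\sigma',\mb{Y}'}=W_n\}$, so $\mb{W}$ must be an itinerary $\mb{y}^{m_n}$. This is a legitimate shortcut and is arguably cleaner because it reuses an existing lemma; the trade-off is that it pushes the essential ``rigidity'' step off into the proof of that lemma (which in turn defers to the remark about Bowen's Claim~4 in the proof of Proposition~\ref{prop:weightballapprox}), whereas the paper's inductive argument here is self-contained and effectively re-derives the $\varepsilon=0$ case of that rigidity from scratch. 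Both arguments deliver the same conclusion with the same hypotheses, and you correctly identified that this realizability step is where the real content of the claim lies.
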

\begin{proof}
By Lemma \ref{lem:denomnapprox}, if
	\[ n > 680 \abs{\A^{\ball{e}{k_1}} \times \B^{\ball{e}{m_n}}}^2 r / \varepsilon \]
and $d(W_{\sigma, \mb{y}_n^{m_n}}, W_{\beta^{m_n}}) < \frac{\varepsilon}{530 r}$ then there is a $(\A^{\ball{e}{k_1}} \times \B^{\ball{e}{m_n}})$-weight $W$ with $\pi_\B W = W_{\sigma, \mb{y}_n^{m_n}}$ and $d(W, W_{\alpha^{k_1} \tup \beta^{m_n}}) < \varepsilon$.
By definition of $\mu_n$ and Lemma \ref{lem:expbound}, both conditions are met for all large enough $n$.

The claim will follow if we show that $W$ is attainable.

With $W$ as chosen above, by Proposition \ref{prop:weightinverse} we can choose $\tilde\sigma \in \Hom(G, \Sym(n))$, $\mb{\tilde X} \in (\A^{\ball{e}{k_1}})^n$, and $\mb{\tilde Y} \in (\B^{\ball{e}{m_n}})^n$ such that $W = W_{\tilde\sigma, (\mb{\tilde X}, \mb{\tilde Y})}$.

Let $\mb{\tilde y} = \pi_e \mb{\tilde Y} \in \B^n$. To complete the proof we show that $\mb{\tilde y}^{m_n} = \mb{\tilde Y}$, i.e.
	\[ \mb{\tilde y}\big(\tilde \sigma(g) i \big) = \left( \mb{\tilde Y}(i) \right)_g \]
for all $i \in [n]$ and $g \in \ball{e}{m_n}$. We prove this by induction on the word length $\abs{g}$.

The base case $\abs{g} = 0$ (i.e. $g=e$) follows immediately from the definition of $\mb{\tilde y}$.

For the inductive step, write $g = ht$ with $\abs{h} = \abs{g}-1$ and $t \in \{s_1^{\pm 1}, \ldots, s_r^{\pm 1}\}$. Then, assuming the result holds for $h$,
	\[ \mb{\tilde y}\big(\tilde \sigma(g) i \big) = \mb{\tilde y}\big(\tilde\sigma(h) \tilde\sigma(t) i \big) = \left( \mb{\tilde Y}(\tilde\sigma(t)i) \right)_h. \]
Now since $W_{\tilde\sigma, \mb{\tilde Y}} = W_{\sigma_n, \mb{y}_n^{m_n}}$, we can pick $j \in [n]$ such that
	\[ \mb{\tilde Y}(i) = \mb{y}_n^{m_n}(j) \quad \text{and} \quad \mb{\tilde Y}(\tilde\sigma(t) i) = \mb{y}_n^{m_n}(\sigma(t) j) . \]
This implies
	\[ \left( \mb{\tilde Y}(\tilde\sigma(t)i) \right)_h = \big( \mb{y}_n^{m_n}(\sigma(t) j) \big)_h = \mb{y}_n(\sigma(g) j) = \big( \mb{y}_n^{m_n}(j) \big)_g = \left( \mb{\tilde Y}(i) \right)_g. \qedhere \]
\end{proof}

Hence for all large enough $n$ we have
	\[ \EE_{\sigma \sim \mu_n} \abs{\mathcal{X}_{k_1, k_2} (\sigma, \alpha\tup\beta, \varepsilon \mid \mb{y}_n)} \geq \exp \big[ n ( F(T, \alpha^{k_1} \mid \beta^{k_2}) - o_n(1) -  \delta) \big] , \]
and therefore
	\[ \limsup_{n \to \infty} \frac{1}{n} \log \EE_{\sigma \sim \mu_n} \abs{\mathcal{X}_{k_1, k_2} (\sigma, \alpha\tup\beta, \varepsilon \mid \mb{y}_n)} \geq F(T, \alpha^{k_1} \mid \beta^{k_2}) - \delta . \]
	
Combining this lower bound with Equation (\ref{eqn:Qbound}) and the definition of $\h_\Sigma (\mu, \alpha \mid \beta \st k, c \varepsilon)$, we get
	\[  d \varepsilon + \shent(2 \abs{\ball{e}{k}} \varepsilon) + \h_\Sigma (\mu, \alpha \mid \beta \st k, c \varepsilon) \geq F(T, \alpha^{k_1} \mid \beta^{k_2}) - \delta . \]
Taking the inf in $\varepsilon$ then letting $\delta$ go to zero gives
	\[ \inf_{\varepsilon} \limsup_{n \to \infty} \frac{1}{n} \log \EE_{\sigma \sim \mu_n} \abs*{ \{ \mb{x} \in \A^n \st (\mb{x}, \mb{y}_n) \in \Omega_k^*(\sigma, \alpha\tup\beta, \varepsilon) \} }  \geq F(T, \alpha^{k_1} \mid \beta^{k_2}) \]
for $k \leq \min(k_1, k_2)$. First take $k_2 \to \infty$, then $k_1 \to \infty$, then take the infimum over $k$. We get
\begin{align*}
	f_\mu(T, \alpha \mid \beta) 
	&\leq \inf_{\varepsilon,k} \limsup_{n \to \infty} \frac{1}{n} \log \EE_{\sigma \sim \mu_n} \abs*{ \{ \mb{x} \in \A^n \st (\mb{x}, \mb{y}_n) \in \Omega_k^*(\sigma, \alpha\tup\beta, \varepsilon) \} } \\
	&= \inf_{\calO \ni (\alpha\beta)^G_*\mu} \limsupinf_{n \to \infty} \frac{1}{n} \log \EE_{\sigma \sim \mu_n} \abs{\{ \mb{x} \in \A^n \st (\mb{x}, \mb{y}_n) \in \Omega(\sigma, \calO) \}}
\end{align*}
where the last line follows because the collection of pseudometrics $\{ d_k^* \st k \in \NN\}$ generates the weak$^*$ topology on $\Prob((\A \times \B)^G)$.

\section{Proof of Theorem \ref{thm:main3}}
\label{sec:main3pf}
By analogy with sofic entropy, we denote $\Sigma \coloneqq \{\mu_n\}_{n=1}^\infty$ and denote the left-hand side of the formula in the theorem statement as $\h_{\Sigma}(\mu, \alpha)$.

Endow $\Prob(\A^G)$ with the metric
	\[ d(\lambda, \nu) \coloneqq \sum_{r = 1}^\infty 2^{-r} d^{\ball{e}{r}}(\lambda, \nu) . \]
Note that this induces the weak* topology (where $\A$ is given the discrete topology and $\A^G$ the product topology).

Writing $\mu_\A = \alpha^G_*\mu \in \Prob(\A^G)$, we then have
	\[ \h_{\Sigma}(\mu, \alpha) = \inf_{\varepsilon>0} \limsup_{n \to \infty} \frac{1}{n} \log \EE_{\sigma \sim \mu_n} \abs{\{ \mb{x} \in \A^n \st d(P_{\mb{x}}^\sigma, \mu_\A) < \varepsilon \}} . \]
We will similarly denote $\mu_\B = \beta^G_* \mu \in \Prob(\B^G)$.

\subsection{Lower bound}
Let $\lambda \in \Prob((\A \times \B)^G)$ be any joining of (the shift systems with respective measures) $\mu_\A$ and $\mu_\B$. Then for any $\mb{x} \in \A^n$ and $\mb{y} \in \B^n$ we have
	\[ d(P_{\mb{x}}^\sigma, \mu_\A) \leq d(P_{(\mb{x},\mb{y})}^\sigma, \lambda), \]
where $d$ is defined on $\Prob((\A \times \B)^G)$ analogously to the definition given on $\Prob(\A^G)$ above. This inequality holds because total variation distance is nonincreasing under pushforwards. Consequently
	\[ \h_{\Sigma}(\mu, \alpha) \geq \inf_{\varepsilon>0} \limsup_{n \to \infty} \frac{1}{n} \log \EE_{\sigma \sim \mu_n} \abs{\{ \mb{x} \in \A^n \st d(P_{(\mb{x},\mb{y}_n)}^\sigma, \lambda) < \varepsilon \}} = f_\lambda(S, \amap \mid \bmap) . \]
Taking the supremum over joinings $\lambda$ gives the lower bound.


\subsection{Upper bound}
For $\varepsilon>0$, let
	\[ \join_\varepsilon \coloneqq \{ \lambda \in \Prob^S((\A \times \B)^G) \st d(\amap^G_* \lambda, \mu_\A) < \varepsilon \text{ and } d(\bmap^G_* \lambda, \mu_\B) < \varepsilon \} \]
be the set of shift-invariant ``approximate joinings'' of $\mu_\A$ and $\mu_\B$. Since $\Prob((\A \times \B)^G)$ is compact, for each $\varepsilon>0$ there exist $\lambda_1, \ldots, \lambda_m \in \join_\varepsilon$ such that
	\[ \join_\varepsilon \subseteq \bigcup_{i=1}^m \ball{\lambda_i}{\varepsilon} . \]
By definition of $\mu_n$ we have $\PP_{\sigma \sim \mu_n} ( d(P_{\mb{y}_n}^\sigma, \mu_\B) < \varepsilon) = 1$ for all large enough $n$. Therefore
\begin{align*}
	\h_{\Sigma}(\mu, \alpha)
	&= \inf_{\varepsilon} \limsup_{n \to \infty} \frac{1}{n} \log \EE_{\sigma \sim \mu_n} \abs{\{ \mb{x} \in \A^n \st P_{(\mb{x},\mb{y}_n)}^\sigma \in \join_\varepsilon \}} \\
	&\leq \inf_{\varepsilon} \limsup_{n \to \infty} \frac{1}{n} \log \sum_{i=1}^m \EE_{\sigma \sim \mu_n} \abs{\{ \mb{x} \in \A^n \st P_{(\mb{x},\mb{y}_n)}^\sigma \in \ball{\lambda_i}{\varepsilon} \}} \\
	&= \inf_{\varepsilon} \max_{1 \leq i \leq m} \limsup_{n \to \infty} \frac{1}{n} \log \EE_{\sigma \sim \mu_n} \abs{\{ \mb{x} \in \A^n \st P_{(\mb{x},\mb{y}_n)}^\sigma \in \ball{\lambda_i}{\varepsilon} \}} \\
	&\leq \inf_{\varepsilon} \sup_{\lambda \in \join_\varepsilon} \limsup_{n \to \infty} \frac{1}{n} \log \EE_{\sigma \sim \mu_n} \abs{\{ \mb{x} \in \A^n \st P_{(\mb{x},\mb{y}_n)}^\sigma \in \ball{\lambda}{\varepsilon} \}} .
\end{align*}
Note that the entire expression in the inf is decreasing as $\varepsilon \to 0$, so we may replace the inf with a limit. Rather than taking a continuous limit we write
	\[ \h_{\Sigma}(\mu, \alpha) \leq \lim_{m \to \infty} \sup_{\lambda \in \join_{1/m}} \limsup_{n \to \infty} \frac{1}{n} \log \EE_{\sigma \sim \mu_n} \abs{\{ \mb{x} \in \A^n \st P_{(\mb{x},\mb{y}_n)}^\sigma \in \ball{\lambda}{1/m} \}} . \]

For each $m$ pick $\lambda_m \in \join_{1/m}$ to get within $1/m$ of the supremum. Then the right-hand side is equal to
	\[ \lim_{m \to \infty} \limsup_{n \to \infty} \frac{1}{n} \log \EE_{\sigma \sim \mu_n} \abs{\{ \mb{x} \in \A^n \st P_{(\mb{x},\mb{y}_n)}^\sigma \in \ball{\lambda_m}{1/m} \}} . \tag{$*$} \]
	
Let $\lambda_{m_j}$ be a subsequence with weak* limit $\lambda_0$. By weak* continuity of pushforwards under projection we have $\lambda_0 \in \join(\mu_\A, \mu_\B)$. Now for any $\delta>0$, for all large enough $j$ we have both $1/m_j < \delta/2$ and $d(\lambda_{m_j}, \lambda_0) < \delta/2$, so by the triangle inequality
	\[ \ball{\lambda_{m_j}}{1/m_j} \subseteq \ball{\lambda_0}{\delta}. \]
It follows that the expression in $(*)$, and hence $h_{\Sigma}(\alpha)$, is bounded above by
	\[ \limsup_{n \to \infty} \frac{1}{n} \log \EE_{\sigma \sim \mu_n} \abs{\{ \mb{x} \in \A^n \st P_{(\mb{x},\mb{y}_n)}^\sigma \in \ball{\lambda_0}{\delta} \}} . \]
Taking the infimum over $\delta$ shows that
	\[ h_{\Sigma}(\mu, \alpha) \leq f_{\lambda_0}(S, \amap \mid \bmap) \leq \sup_{\lambda \in \join(\mu_\A, \mu_\B)} f_\lambda (S, \amap \mid \bmap) . \]

\section{Proof of Proposition \ref{prop:sofic}}
\label{sec:soficpf}

All sequences of interest are of the form
	\[ \mu_n = \SBM(\sigma_n, \mb{y}_n, m_n) = \Unif(\{ \sigma \in \Hom(G, \Sym(n)) \st W_{\sigma, \mb{y}_n^{m_n}} = W_n \}) \]
with $\mb{y}_n \in \B^n$, $\sigma_n \in \Sym(n)$, $m_n = o(\log \log n)$, and where $W_n$ is the $\B^{\ball{e}{m_n}}$-weight $W_{\sigma_n, \mb{y}_n^{m_n}}$. In the case of Theorem \ref{thm:main1} we simply have $m_n = 0$ for all $n$.

The theorem will follow from the following:

\begin{lemma}
	Let $\zeta_n$ denote the uniform measure on $\Hom(G, \Sym(n))$. Then for any finite $D \subset G$ and $\delta > 0$ there exists $\varepsilon>0$ such that
		\[ \PP_{\sigma \sim \zeta_n} ( \sigma \text{ is } (D,\delta) \text{-sofic} ) \geq 1 - n^{-\varepsilon n} \]
	for all large enough $n$. \qed
\end{lemma}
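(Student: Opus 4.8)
The plan is to prove the bound for the whole set $D$ at once, by an online ``reveal-and-charge'' argument rather than a first-moment estimate over sets of fixed points (the latter is in fact too lossy here, since the number of ``collision patterns'' of the relevant trajectories is $n^{\Theta(n)}$). Sample a uniform $\sigma\in\Hom(G,\Sym(n))$ by sampling $r$ independent uniform permutations $\sigma_1=\sigma(s_1),\dots,\sigma_r=\sigma(s_r)$, and reveal the values $\sigma_i(v)$, $(v,i)\in[n]\times[r]$, one at a time in lexicographic order of $(v,i)$. Call $j\in[n]$ \emph{bad} if $\sigma(\gamma)j=j$ for some $\gamma\in D\setminus\{e\}$; then $\sigma$ is $(D,\delta)$-sofic iff fewer than $\delta n$ vertices are bad. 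For a bad $j$, fix its smallest witness $\gamma_j$ (under a fixed order on $D$) and let $j=x_0,x_1,\dots,x_{\lvert\gamma_j\rvert}=j$ be the trajectory obtained by applying the letters of $\gamma_j$ in turn; its \emph{confirmation step} is the last revealing step $(v,i)$ at which some edge of this trajectory (an application of $\sigma_i^{\pm1}$ at the vertex $v$) gets revealed.

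The crux of the argument --- and the step I expect to need the most care --- is a ``bounded confirmation'' lemma: for each step $(v,i)$ there is a set $S_{v,i}\subseteq[n]$, determined by the values revealed \emph{before} this step, with $\lvert S_{v,i}\rvert\le K:=\sum_{\gamma\in D\setminus\{e\}}\lvert\gamma\rvert$, such that a bad vertex whose confirmation step is $(v,i)$ must have $\sigma_i(v)\in S_{v,i}$, and at most $K$ bad vertices can share a confirmation step. The reason: given $\gamma\in D\setminus\{e\}$ together with a choice of which position of $\gamma$'s trajectory is the edge revealed at step $(v,i)$ (at most $\lvert\gamma\rvert$ choices), the remaining edges of the trajectory were revealed earlier, so tracing them forward and backward from the vertex $v$ reconstructs the entire closed walk, hence its basepoint $j$ and the unique value $\sigma_i(v)$ would have to take to close the walk up. Summing over $\gamma$ gives at most $K$ candidate basepoints and at most $K$ forced values. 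The main obstacle is the bookkeeping for degenerate walks --- those that revisit a vertex, that have several edges revealed at the same step, or that pass through a fixed point of some $\sigma_i$ --- but each such degeneracy only makes the walk \emph{more} constrained, so $\lvert S_{v,i}\rvert\le K$ still holds.

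Granting this, set $\eta:=\delta/(4Kr)$ and $M:=\lceil\delta n/(2K)\rceil$ (the case $D\setminus\{e\}=\varnothing$ being trivial). If $\sigma$ is not $(D,\delta)$-sofic there are at least $\delta n$ bad vertices, hence at least $\delta n/K$ distinct confirmation steps; discarding the fewer than $(\eta n+1)r$ of these with $v>(1-\eta)n$ leaves at least $M$ confirmation steps $(v,i)$ with $v\le(1-\eta)n$, at each of which $\sigma_i(v)\in S_{v,i}$. Thus ``$\sigma$ is not $(D,\delta)$-sofic'' implies $Z\ge M$, where $Z$ counts the steps $(v,i)$ with $v\le(1-\eta)n$ and $\sigma_i(v)\in S_{v,i}$. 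Revealing in order, at such a step $\sigma_i(v)$ is uniform over the $n-v+1\ge\eta n$ values of $\sigma_i$ not yet used, so the conditional probability of $\sigma_i(v)\in S_{v,i}$ given the past is at most $K/(\eta n)$; hence $Z$ is dominated by a sum of at most $rn$ indicators with that conditional bound, and
\[
	\PP_{\sigma\sim\zeta_n}\big(Z\ge M\big)\ \le\ \binom{rn}{M}\Big(\frac{K}{\eta n}\Big)^{M}\ \le\ \Big(\frac{erK}{\eta M}\Big)^{M}\ =\ n^{-(1-o(1))\,\delta n/(2K)} .
\]
Taking any $\varepsilon$ with $0<\varepsilon<\delta/(2K)$ gives $\PP_{\sigma\sim\zeta_n}(\sigma\text{ is not }(D,\delta)\text{-sofic})\le n^{-\varepsilon n}$ for all large $n$, which is the lemma. (This then yields Proposition~\ref{prop:sofic}: each $\mu_n$ is uniform on a subset of $\Hom(G,\Sym(n))$ of size at least $(n!)^r n^{-o(n)}$ by Proposition~\ref{prop:Zbounds} and $\supp$-counting, so $\PP_{\sigma\sim\mu_n}(\text{not }(D,\delta)\text{-sofic})\le n^{o(n)}\,\PP_{\sigma\sim\zeta_n}(\text{not }(D,\delta)\text{-sofic})$.)
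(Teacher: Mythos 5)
The paper does not prove this lemma itself; it cites the analogous result from \cite{bowen2020}, so a correct self-contained proof would be a real addition. Unfortunately the ``bounded confirmation'' lemma is false, and the remark that degeneracies only make the walk ``more constrained'' is exactly backwards in the problematic case. Take $\gamma=s_i^{-1}s_js_is_j^{-1}\in D$ with $j<i$ (a cyclically reduced commutator), and suppose the past at step $(v,i)$ shows $\sigma_j(v)=v$ and also shows $\sigma_j$-fixed points $w_1,\dots,w_m\le v$. For each $w_p$, revealing $\sigma_i(v)=w_p$ closes the trajectory $v\to v\to w_p\to w_p\to v$ of $\gamma$ based at $v$; its four edges are revealed at steps $(v,j),(v,i),(w_p,j),(v,i)$, all $\le(v,i)$, so $v$ becomes bad with confirmation step exactly $(v,i)$. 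Any $S_{v,i}$ satisfying your lemma must therefore contain every $w_p$, so $\lvert S_{v,i}\rvert$ is at least the number of $\sigma_j$-fixed points in $\{1,\dots,v\}$, which is not bounded by $K$. The trace from $(\gamma,l=2)$ fails here because positions $2$ and $4$ of the walk are revealed at the \emph{same} step $(v,i)$: the two already-revealed edges (positions $1$ and $3$) do not pin $\sigma_i(v)$ down, they only force it onto the set of $\sigma_j$-fixed points.

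The conclusion of the lemma is of course still true, since $\sigma_j$ is unlikely to have many fixed points, but this rarity lives in the conditioning and your union bound $\binom{rn}{M}\bigl(K/(\eta n)\bigr)^M$ requires the pointwise deterministic bound $\lvert S_{v,i}\rvert\le K$. Truncating on every $\sigma_j$ having at most $L$ fixed points either costs too much in probability (for $L$ small) or makes $\lvert S_{v,i}\rvert/(\eta n)$ of order one (for $L$ a constant fraction of $n$), and in either case the $n^{-\Theta(n)}$ rate is lost. Some structural workaround would be needed --- e.g.~re-charging each such degenerate confirmation to the earlier step $(w_p,j)$ at which $\sigma_j(w_p)=w_p$ is revealed, where the forced value is the singleton $\{w_p\}$, together with an argument that re-charges do not pile up --- but as written the key lemma of your proof has a real gap.
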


This can be proven by making superficial changes to the proof of the similar result in \cite{bowen2020}.

To prove Proposition \ref{prop:sofic}, it now suffices to show that for any $\varepsilon > 0$
	\[ \PP_{\sigma \sim \zeta_n} ( W_{\sigma, \mb{y}_n^{m_n}} = W_n ) \geq n^{-\varepsilon n} \]
for all large enough $n$. To do this, first note that the left-hand side here depends only on the vector  $p_{\mb{y}_n} \in \Prob(\B)$ of letter frequencies. Therefore
\begin{align*}
	\PP_{\sigma \sim \zeta_n} ( \exists \mb{y} \in \B^n  \text{ s.t. } W_{\sigma, \mb{y}^{m_n}} = W_n )
		&\leq \sum_{\mb{y} \st p_{\mb{y}} = p_{\mb{y}_n}} \PP_{\sigma \sim \zeta_n} ( W_{\sigma, \mb{y}^{m_n}} = W_n ) \\
		&= \exp\{ n \shent(p_{\mb{y}_n}) + o(n)\} \PP_{\sigma \sim \zeta_n} ( W_{\sigma, \mb{y}_n^{m_n}} = W_n ).
\end{align*}
But by Proposition \ref{prop:weightballapprox}, if $\sigma \in \Hom(G, \Sym(n))$ and $\mb{Y} \in (\B^{\ball{e}{m_n}})^n$ are such that $W_{\sigma, \mb{Y}} = W_n = W_{\sigma_n, \mb{y}_n^{m_n}}$, then the projection $\mb{Y}_e \in \B^n$ satisfies $(\mb{Y}_e)^{m_n} = \mb{Y}$. Therefore for each $\sigma$
	\[ \abs*{\{\mb{Y} \in (\B^{\ball{e}{m_n}})^n  \st W_{\sigma, \mb{Y}} = W_n\}} = \abs*{\{\mb{y} \in \B^n  \st W_{\sigma, \mb{y}^{m_n}} = W_n\}} . \]
Hence
\begin{align*}
	\EE_{\sigma \sim \zeta_n} \abs*{\{\mb{Y} \in (\B^{\ball{e}{m_n}})^n  \st W_{\sigma, \mb{Y}} = W_n \}}
		&= \EE_{\sigma \sim \zeta_n} \abs*{\{\mb{y} \in \B^n  \st W_{\sigma, \mb{y}^{m_n}} = W_n \}} \\
		&\leq \abs{\B}^n \PP_{\sigma \sim \zeta_n} ( \exists \mb{y} \in \B^n  \text{ s.t. } W_{\sigma, \mb{y}^{m_n}} = W_n ).
\end{align*}
		 
Combining these last few statements, we see that 
	\[ \PP_{\sigma \sim \zeta_n} ( W_{\sigma, \mb{y}_n^{m_n}} = W_n ) \geq \exp\{ - 2n \log \abs{\B} + o(n)\} \EE_{\sigma \sim \zeta_n} \abs*{\{\mb{Y} \in (\B^{\ball{e}{m_n}})^n  \st W_{\sigma, \mb{Y}} = W_n \}} . \]
We can ignore the first factor here since it only decays exponentially fast. By Proposition \ref{prop:Zbounds},
	\[ \EE_{\sigma \sim \zeta_n} \abs*{\{\mb{Y} \in (\B^{\ball{e}{m_n}})^n  \st W_{\sigma, \mb{Y}} = W_n \}} = \frac{Z_n(W_n)}{(n!)^r} \geq (3 \sqrt{n})^{-r \abs{\B^{\ball{e}{m_n}}}^2} e^{F(W_n) n} n^{(1-r)/2} . \]
The third factor is clearly not a problem and can also be ignored. For the first factor,
	\[ \frac{1}{n \log n} \log (3 \sqrt{n})^{-r \abs{\B^{\ball{e}{m_n}}}^2}
		= -r \frac{\abs{\B^{\ball{e}{m_n}}}^2}{n} \frac{\log 3 \sqrt{n}}{\log n}
		\to 0 \text{ as } n \to \infty \]
using Lemma \ref{lem:expbound}. For the second factor, first note that by definition of $F(W_n)$ we have
\begin{align*}
	F(W_n) &= (1-2r) \shent\big(W_n(\cdot)\big) + \sum_{i \in [r]} \shent\big( W_n(\cdot, \cdot; i) \big) \\
		&\geq -2r \shent \big( W_n(\cdot) \big) \\
		&\geq -2r \log \abs*{\B^{\ball{e}{m_n}}}.
\end{align*}
So
	\[ \frac{1}{n \log n} \log e^{F(W_n) n} = \frac{F(W_n)}{\log n} \geq -2r \frac{\log \abs*{\B^{\ball{e}{m_n}}} }{\log n} \to 0 \text{ as } n \to \infty, \]
again using Lemma \ref{lem:expbound}. This implies that for every $\varepsilon>0$ we have
	\[ (3 \sqrt{n})^{-r \abs{\B^{\ball{e}{m_n}}}^2} e^{F(W_n) n} \geq n^{-\varepsilon n} \]
for all large enough $n$, which implies the result.

\section{Proof of Lemma \ref{lem:denomnapprox}}
\label{sec:denomnapproxproof}

We show how to construct a denominator-$n$ weight $W_{\A\B}$ that has a given $\B$-marginal $W_\B$ and is close to a given $(\A \times \B)$-weight $W$ whose $\B$-marginal $\pi_\B W$ is close to $W_\B$. As in the theorem statement, we assume
	\[ d(\pi_\B W, W_\B) < \delta . \]
To minimize the appearance of factors of $\frac{1}{2}$, in this section we work with the $\ell^1$ distance on weights, which is twice the distance defined above. Therefore the previous assumption becomes
	\[ d_1(\pi_\B W, W_\B) = \sum_{i \in [r]} \sum_{b,b' \in \B} \abs{ \pi_\B W(b,b'; i) - W_\B (b,b'; i)} < 2 \delta . \]

We fix distinguished elements $a_0 \in \A$ and $b_0 \in \B$ which will be referred to throughout this section.

\subsection{The vertex measure}
	We first define the weight's vertex measure by
	\begin{align*}
		W_{\A\B}((a,b)) &= \tfrac{1}{n} \floor*{n \cdot W((a,b))}
			& a \in \A \setminus \{a_0\},\ b \in \B \\
		W_{\A\B}((a_0,b)) &= W_\B(b) - \sum_{a \ne a_0} W_{\A\B}((a,b))
			& b \in \B \mathrlap{.}
	\end{align*}
	See Table \ref{tab:vertmeasure}.
	
	\begin{table}
		\centering
		\small
		\begin{tabu}{c|ccc}
			& $a_0$ & $a_1$ & $\cdots$ \\
			\tabucline{-}
			$b_0$ & $\rightarrow$ & $\floor{\cdot}$ & $\floor{\cdot}$\\
			\tabucline[0.1pt]{-}
			$b_1$ & $\rightarrow$ & $\floor{\cdot}$ & $\floor{\cdot}$\\
			\tabucline[0.1pt]{-}
			$\vdots$ & $\rightarrow$ & $\floor{\cdot}$ & $\floor{\cdot}$
		\end{tabu}
		\caption{Picking entries of the vertex measure $W_{\A\B}(\cdot)$. First choose entries of the form $W_{\A\B}((a,b))$ for $a \ne a_0$ by rounding down $W((a,b))$, then fill in the first column in a way that guarantees the correct $\B$-marginal.}
		\label{tab:vertmeasure}
	\end{table}
	
	Note that $\abs*{W_{\A\B}((a,b)) - W((a,b))} \leq 1/n$ for $a \ne a_0$ and
	\begin{align*}
		\abs*{W_{\A\B}((a_0,b)) - W((a_0,b))} 
		&\leq \abs*{W_\B(b) - \pi_\B W(b)} + \abs{\A}/n.
	\end{align*}
	Therefore the $\ell^1$ distance between the vertex measures is
	\begin{align*}
		\sum_{a,b} \abs*{W_{\A\B}((a,b)) - W((a,b))}
		&\leq \abs{\A} \abs{\B} / n + \sum_{b \in \B} \big(\abs*{W_\B(b) - \pi_\B W(b)} + \abs{\A}/n \big) \\
		&\leq 2\delta + 2\abs{\A}\abs{\B}/n.
	\end{align*}
		
\subsubsection{Nonnegativity}
The terms defined by rounding down $W$ using the floor function are guaranteed to be nonnegative, but the others are not. In the following we show how to repair any negativity.

Let $-R/n$ denote the sum of all negative terms in the vertex measure. Since $W$ contains only nonnegative terms we have
	\[ \1{W_{\A\B}((a,b)) < 0} \cdot  \abs{W_{\A\B}((a,b))} \leq \abs{W_{\A\B}((a,b)) - W((a,b))} \quad \text{for all } a,b. \]
Therefore
	\[ R/n \leq \sum_{b \in \B}\abs{W_{\A\B}((a_0,b)) - W((a_0,b))} \leq 2\delta + \abs{\A}\abs{\B}/n .\]

Suppose there is some $b \in \B$ such that $W_{\A\B}((a_0,b)) < 0$. Since $W_{\A\B}$ has denominator $n$, we must have $W_{\A\B}((a_0,b)) \leq -1/n$. By construction, we have
	\[ \sum_{a \in A} W_{\A\B}((a,b)) = W_\B(b) \geq 0, \]
so there exists some $a^+ \in \A$ with $W_{\A\B}((a^+,b)) \geq 1/n$. Increase $W_{\A\B}((a_0,b))$ by $1/n$ and decrease $W_{\A\B}((a^+,b))$ by $1/n$.

The number of times we must repeat this step before all terms are nonnegative is exactly $R$, and each step moves the measure by $\ell^1$ distance $2/n$; therefore the final edited vertex measure is distance at most $2R/n$ from the original $W_{\A\B}$. If we now let $W_{\A\B}$ denote the new, nonnegative vertex measure, by the above bound on $R/n$ we get
	\[ \sum_{a,b} \abs*{W_{\A\B}((a,b)) - W((a,b))} \leq 6 \delta + 4 \abs{\A}\abs{\B}/n . \]

\subsection{The $\B$ half-marginal}
	For the purposes of this construction we use the $\B$ ``half-marginal,'' which we denote
	\begin{align*}
		W(b, (a', b'); i) &\coloneqq \sum_{a \in \A} W((a, b), (a', b'); i) .
	\end{align*}
	This is an element of $\Prob\big( (\B \times (\A \times \B) )^r \big)$.
	
	Before constructing the edge measure of $W_{\A\B}$, in this section we first construct what will be its half-marginal.
	
	For each $i \in [r]$, $b,b' \in \B$, and $a' \in \A$ we define
	\begin{align}
		\label{eqn:halfmarg1}
		W_{\A\B}(b, (a', b'); i) &= \tfrac{1}{n} \floor*{n \cdot W(b, (a', b'); i)} & \text{for } a' \ne a_0,\ b \ne b_0, \\[0.2cm]
		\label{eqn:halfmarg2}
		W_{\A\B}(b, (a_0, b'); i) &= W_\B(b, b'; i) - \sum_{a' \ne a_0} W_{\A\B}(b, (a', b'); i) & \text{for } b \ne b_0, \\[0.2cm]
		\label{eqn:halfmarg3}
		W_{\A\B}(b_0, (a', b'); i) &= W_{\A\B}((a',b')) - \sum_{b \ne b_0} W_{\A\B}(b, (a', b'); i) .
	\end{align}
	See Table \ref{tab:halfmarg} for a representation of which terms are defined by each equation.
	\begin{table}
		\centering
		\small
		\newlength{\len}
		\setlength{\len}{0.8pt}
%
%
%
		\begin{tabu}{c| ccc | ccc | ccc}
			 &
				 $(a_0,b_0)$ & $(a_1, b_0)$ & $(a_2, b_0)$ &
				 $(a_0,b_1)$ & $(a_1, b_1)$ & $(a_2, b_1)$ &
				 $(a_0,b_2)$ & $(a_1, b_2)$ & $(a_2, b_2)$ \\
			 
			\tabucline{-}
			$b_0$ &
				$\downarrow$ & $\downarrow$ & $\downarrow$ &
				$\downarrow$ & $\downarrow$ & $\downarrow$ &
				$\downarrow$ & $\downarrow$ & $\downarrow$ \\
				
			\tabucline{-}
			$b_1$ &
				$\rightarrow$ & $\floor{\cdot}$ & $\floor{\cdot}$ &
				$\rightarrow$ & $\floor{\cdot}$ & $\floor{\cdot}$ &
				$\rightarrow$ & $\floor{\cdot}$ & $\floor{\cdot}$ \\
				
			\tabucline{-}
			$b_2$ &
				$\rightarrow$ & $\floor{\cdot}$ & $\floor{\cdot}$ &
				$\rightarrow$ & $\floor{\cdot}$ & $\floor{\cdot}$ &
				$\rightarrow$ & $\floor{\cdot}$ & $\floor{\cdot}$
		\end{tabu}
		\caption[Table]{A diagram of how the half-marginal $W_{\A\B}(\cdot, (\cdot, \cdot); i)$ is chosen if $\A = \{a_0, a_1, a_2\}$ and $\B = \{b_0, b_1, b_2\}$. First obtain the entries marked $\floor{\cdot}$ by rounding down $W$. Then choose the entries marked $\rightarrow$ according to Equation \ref{eqn:halfmarg2} which ensures that the $\B$-marginal is $W_\B$. Then choose the entries marked $\downarrow$ according to Equation \ref{eqn:halfmarg3} which ensures that the vertex weight is the one we chose above. }
		\label{tab:halfmarg}
	\end{table}
	
	The definition of the terms in (\ref{eqn:halfmarg3}) ensures that
		\[ \sum_{b \in \B} W_{\A\B}(b, (a', b'); i) = W_{\A\B}((a',b')) \quad \text{for all } a',b',i. \]
	This will ensure that $W_{\A\B}$ has the correct vertex measure.
	Note also that by line (\ref{eqn:halfmarg2})
		\[ \sum_{a' \in \A} W_{\A\B}(b, (a', b'); i) = W_\B(b, b'; i) \quad \text{for all } b \in \B \text{ and } b' \in \B \setminus \{b_0\} . \]
	Using this and definition (\ref{eqn:halfmarg3}) we also get
	\begin{align*}
		\sum_{a' \in \A} W_{\A\B}(b_0, (a', b'); i) 
		&= W_\B(b_0, b'; i).
	\end{align*}
	This will ensure that the $\B$-marginal of $W_{\A\B}$ is $W_\B$.
	
	We show now that the half-marginal $W_{\A\B}(\cdot, (\cdot, \cdot); i)$ is $\ell^1$-close to $W(\cdot, (\cdot, \cdot); i)$ by considering separately the contributions to the $\ell^1$ distance from terms defined using Equations \ref{eqn:halfmarg1}, \ref{eqn:halfmarg2}, and \ref{eqn:halfmarg3}.
	\begin{enumerate}
		\item[(\ref{eqn:halfmarg1}) terms:] Each of the terms of $W_{\A\B}$ defined using the floor in equation (\ref{eqn:halfmarg1}) is distance at most $1/n$ from the corresponding term of $W$; therefore the total contribution of these terms to the $\ell^1$ distance is 
		\begin{align*}
			\sum_{\substack{b \in \B \setminus \{b_0\} \\ a' \in \A \setminus\{a_0\}, b' \in \B \\ i \in [r]}} \abs*{W_{\A\B}(b, (a', b'); i) - W(b, (a', b'); i)} 
			&\leq \abs{\A} \abs{\B}^2 r/ n .
		\end{align*}
		
		\item[(\ref{eqn:halfmarg2}) terms:] By the triangle inequality,
		\begin{align*}
			\lhs \abs*{W_{\A\B}(b, (a_0, b'); i) - W(b, (a_0, b'); i)} \\
			&= \abs*{\left(W_\B(b, b'; i) - \sum_{a' \ne a_0} W_{\A\B}(b, (a', b'); i)\right) - \left( \pi_\B W(b, b'; i) - \sum_{a' \ne a_0} W(b, (a', b'); i) \right) } \\
			&\leq \abs*{W_\B(b, b'; i) - \pi_\B W(b, b'; i)} + \sum_{a' \ne a_0} \abs*{W_{\A\B}(b, (a', b'); i) - W(b, (a', b'); i)} .
		\end{align*}
		The total contribution of such terms is therefore
		\begin{align*}
			\lhs \sum_{\substack{b \in \B \setminus \{b_0\},\ b' \in \B \\ i \in [r]}} \abs*{W_{\A\B}(b, (a_0, b'); i) - W(b, (a_0, b'); i)} \\
			&\leq \overbrace{\sum_{\substack{b \in \B \setminus \{b_0\},\ b' \in \B \\ i \in [r]}} \abs*{W_\B(b, b'; i) - (\pi_\B)_*W(b, b'; i)}}^{\leq d_1(W_\B, \pi_\B W)} \\
			&\qquad + \overbrace{\sum_{\substack{b \in \B \setminus \{b_0\}\\ a' \in \A \setminus \{a_0\},\ b' \in \B \\ i \in [r]}} \abs*{W_{\A\B}(b, (a', b'); i) - W(b, (a', b'); i)}}^{=\text{contribution from (\ref{eqn:halfmarg1}) terms}} \\
			&\leq 2\delta + \abs{\A} \abs{\B}^2 r /n .
		\end{align*}
		
		\item[(\ref{eqn:halfmarg3}) terms:]
		Again applying the triangle inequality,
		\begin{align*}
			&\abs*{W_{\A\B}(b_0, (a, b'); i) - W(b_0, (a, b'); i)} \\
			&\quad \leq \abs*{W_{\A\B}((a,b')) - W((a,b'))} + \sum_{b \ne b_0} \abs*{W_{\A\B}(b, (a, b'); i) - W(b, (a, b'); i)}  .
		\end{align*}
		Summing over all $a \in \A$, $b' \in \B$ and $i \in [r]$, we see that the total contribution of such terms is bounded by
		\begin{align*}
			\sum_{\substack{a \in \A, b' \in \B \\ i \in [r]}} &\left[\abs*{W_{\A\B}((a,b')) - W((a,b'))} + \sum_{b \ne b_0} \abs*{W_{\A\B}(b, (a, b'); i) - W(b, (a, b'); i)} \right] \\
			&= \sum_{i \in [r]} \overbrace{\sum_{\substack{a \in \A \\ b \in \B}} \abs*{W_{\A\B}((a,b)) - W((a,b))}}^{\text{vertex measure}} + \overbrace{\sum_{\substack{b \in \B \setminus \{b_0\} \\ a' \in \A \setminus\{a_0\},\ b' \in \B \\ i \in [r]}} \abs*{W_{\A\B}(b, (a', b'); i) - W(b, (a', b'); i)}}^{\text{(\ref{eqn:halfmarg1}) terms}} \\
			&\qquad + \overbrace{\sum_{\substack{b \in \B \setminus \{b_0\}, \ b' \in \B \\ i \in [r]}} \abs*{W_{\A\B}(b, (a_0, b'); i) - W(b, (a_0, b'); i)}}^{\text{(\ref{eqn:halfmarg2}) terms}} \\
			&\leq r \cdot \left[ 6 \delta + 4 \abs{\A} \abs{\B} / n \right] + \left[ \abs{\A} \abs{\B}^2 r / n \right] + \left[ 2\delta + \abs{\A} \abs{\B}^2 r / n \right] \\
			&\leq 8 r \delta + 6 \abs{\A} \abs{\B}^2 r / n .
		\end{align*}
	\end{enumerate}
	Adding up the contributions of the three types of terms, we see that the $\ell^1$ distance between the half-marginals of $W$ and $W_{\A\B}$ is bounded by
		\[ 10r \delta + 8 \abs{\A} \abs{\B}^2 r/n . \]
		
	\subsubsection{Nonnegativity}
	Again, the preceding construction does not guarantee that all terms are nonnegative. In the following we describe how to correct negativity.
	
	Let $-R/n$ be the sum of all negative terms of the half-marginal. As above, we get
		\[ R/n \leq 10r\delta + 7 \abs{\A} \abs{\B}^2r/n . \]
		
	Suppose there is some $b_- \in B$, $(a'_-,b'_-) \in \A \times \B$, and $i \in [r]$ such that $W_{\A\B}(b_-, (a'_-,b'_-); i) < 0$. Then $W_{\A\B}(b_-, (a'_-,b'_-); i) \leq -1/n$. Since 
		\[ \sum_{a' \in \A} W_{\A\B}(b_-, (a',b'_-); i) = W_\B(b_-, b'_-; i) \geq 0 \]
	and
		\[ \sum_{b \in \B} W_{\A\B}(b, (a'_-,b'_-); i) = W_{\A\B}((a'_-,b'_-)) \geq 0 \]
	there exist $a'_+ \in \A$ and $b_+ \in \B$ such that
		\[ W_{\A\B}(b_-, (a'_+,b'_-); i) \geq 1/n \quad \text{and} \quad W_{\A\B}(b_+, (a'_-,b'_-); i) \geq 1/n . \]
	Decrease both of these terms by $1/n$, and increase both $W_{\A\B}(b_-, (a'_-,b'_-); i)$ and $W_{\A\B}(b_+, (a'_+,b'_-); i)$ by $1/n$. This moves the half-marginal by $\ell^1$ distance $4/n$. 
		\[ \sum_{a' \in \A} W_{\A\B}(b, (a',b'); i) = W_\B(b, b'; i) \quad \text{and} \quad \sum_{b \in \B} W_{\A\B}(b, (a',b'); i) = W_{\A\B}((a',b')). \]
		
	This step must be done at most $R$ times to eliminate all negative entries, so the final half-marginal satisfies
	\begin{align*}
		\sum_{i \in [r]} \sum_{b \in \B} \sum_{(a',b') \in \A \times \B} \abs{W_{\A\B}(b, (a',b'); i) - W(b, (a',b'); i)} &\leq (10r\delta + 8 \abs{\A} \abs{\B}^2r/n) + R\cdot 4/n \\
		&\leq 50r\delta + 36 \abs{\A} \abs{\B}^2r/n .
	\end{align*}
	
\subsection{The edge measure}
Finally, we define the edge measure of $W_{\A\B}$ by
	\begin{align}
		\label{eqn:fullmarg1}
			\begin{split}
				W_{\A\B}( (a, b), (a', b'); i) &= \tfrac{1}{n} \floor*{n \cdot W((a, b), (a', b'); i)} \\
				&\hspace{3cm} \text{for } a \ne a_0 \text{ and } (a',b') \ne (a_0, b_0),
			\end{split} \\[0.2cm]
		\label{eqn:fullmarg2}
			\begin{split}
				W_{\A\B}( (a_0, b), (a', b'); i) &= W_{\A\B}(b, (a', b'); i) - \sum_{a \ne a_0} W_{\A\B}((a, b), (a', b'); i) \\
				&\hspace{3cm} \text{for } (a',b') \ne (a_0, b_0),
			\end{split} \\[0.2cm]
		\label{eqn:fullmarg3}
			W_{\A\B}((a, b), (a_0, b_0); i) &= W_{\A\B}((a, b)) - \sum_{(a',b') \ne (a_0,b_0)} W_{\A\B}((a, b), (a', b'); i) .
	\end{align}
	See Table \ref{tab:fullmarg}.
	
	\begin{table}
		\centering
		\small
		\setlength{\len}{0.8pt}
		\begin{tabu}{c | ccc | ccc | ccc }
		
			 &
			 $(a_0,b_0)$ & $(a_1, b_0)$ & $(a_2, b_0)$ &
			 $(a_0,b_1)$ & $(a_1, b_1)$ & $(a_2, b_1)$ &
			 $(a_0,b_2)$ & $(a_1, b_2)$ & $(a_2, b_2)$ \\
			 
			\tabucline{-} 
			$(a_0,b_0)$ & 
				$\rightarrow$ & $\downarrow$ & $\downarrow$ &
				$\downarrow$ & $\downarrow$ & $\downarrow$ &
				$\downarrow$ & $\downarrow$ & $\downarrow$ \\ 
			$(a_1,b_0)$ & 
				$\rightarrow$ & $\floor{\cdot}$ & $\floor{\cdot}$ &
				$\floor{\cdot}$ & $\floor{\cdot}$ & $\floor{\cdot}$ &
				$\floor{\cdot}$ & $\floor{\cdot}$ & $\floor{\cdot}$ \\ 
			$(a_2,b_0)$ & 
				$\rightarrow$ & $\floor{\cdot}$ & $\floor{\cdot}$ &
				$\floor{\cdot}$ & $\floor{\cdot}$ & $\floor{\cdot}$ &
				$\floor{\cdot}$ & $\floor{\cdot}$ & $\floor{\cdot}$ \\ 
				
			\tabucline{-} 
			$(a_0,b_1)$ & 
				$\rightarrow$ & $\downarrow$ & $\downarrow$ &
				$\downarrow$ & $\downarrow$ & $\downarrow$ &
				$\downarrow$ & $\downarrow$ & $\downarrow$ \\ 
			$(a_1,b_1)$ & 
				$\rightarrow$ & $\floor{\cdot}$ & $\floor{\cdot}$ &
				$\floor{\cdot}$ & $\floor{\cdot}$ & $\floor{\cdot}$ &
				$\floor{\cdot}$ & $\floor{\cdot}$ & $\floor{\cdot}$ \\ 
			$(a_2,b_1)$ & 
				$\rightarrow$ & $\floor{\cdot}$ & $\floor{\cdot}$ &
				$\floor{\cdot}$ & $\floor{\cdot}$ & $\floor{\cdot}$ &
				$\floor{\cdot}$ & $\floor{\cdot}$ & $\floor{\cdot}$ \\ 
				
			\tabucline{-} 
			$(a_0,b_2)$ & 
				$\rightarrow$ & $\downarrow$ & $\downarrow$ &
				$\downarrow$ & $\downarrow$ & $\downarrow$ &
				$\downarrow$ & $\downarrow$ & $\downarrow$ \\ 
			$(a_1,b_2)$ & 
				$\rightarrow$ & $\floor{\cdot}$ & $\floor{\cdot}$ &
				$\floor{\cdot}$ & $\floor{\cdot}$ & $\floor{\cdot}$ &
				$\floor{\cdot}$ & $\floor{\cdot}$ & $\floor{\cdot}$ \\ 
			$(a_2,b_2)$ & 
				$\rightarrow$ & $\floor{\cdot}$ & $\floor{\cdot}$ &
				$\floor{\cdot}$ & $\floor{\cdot}$ & $\floor{\cdot}$ &
				$\floor{\cdot}$ & $\floor{\cdot}$ & $\floor{\cdot}$
		\end{tabu}
		\caption{A diagram of how the edge measure $W_{\A\B}((\cdot,\cdot), (\cdot, \cdot); i)$ is chosen if $\A = \{a_0, a_1, a_2\}$ and $\B = \{b_0, b_1, b_2\}$. First obtain the entries marked $\floor{\cdot}$ by rounding down entries of $W$. Then choose entries marked $\downarrow$ according to Equation \ref{eqn:fullmarg2}, which ensures that the $\B$ half-marginal is the one chosen above. Then choose entries marked $\rightarrow$ according to Equation \ref{eqn:fullmarg3}, which ensures that the vertex measure is the one chosen above.}
		\label{tab:fullmarg}
	\end{table}
	
	It follows from this definition that $W_{\A\B}$ is a (signed) weight with $\B$-marginal $W_\B$.

	We now check that $W_{\A\B}$ is $\ell^1$-close to $W$. We consider separately the contribution to the $\ell^1$ distance of terms defined in equations (\ref{eqn:fullmarg1}), (\ref{eqn:fullmarg2}), and (\ref{eqn:fullmarg3}):
	\begin{enumerate}
		\item[(\ref{eqn:fullmarg1}) terms:] Each term of $W_{\A\B}$ defined using the floor function in equation (\ref{eqn:fullmarg1}) is distance at most $1/n$ from the corresponding $W$ term. The total contribution of these terms to the $\ell^1$ distance is therefore at most $\abs{\A}^2 \abs{\B}^2 r/n$.

		\item[(\ref{eqn:fullmarg2}) terms:] Applying the triangle inequality to terms defined in equation (\ref{eqn:fullmarg2}),
		\begin{align*}
			\lhs \abs*{W_{\A\B}( (a_0, b), (a', b'); i) - W( (a_0, b), (a', b'); i)} \\
			&\leq \abs*{W_{\A\B}(b, (a', b'); i) -W(b, (a', b'); i)} \\
			&\qquad + \sum_{a \ne a_0} \abs*{W_{\A\B}((a, b), (a', b'); i) - W((a, b), (a', b'); i)} \\
			&\leq \abs*{W_{\A\B}(b, (a', b'); i) -W(b, (a', b'); i)} + \abs{\A}/n .
		\end{align*}
		By the $\ell^1$ bound on the distance between the half-marginals, the total contribution of all such terms is therefore
		\begin{align*}
			\lhs \sum_{i \in [r]} \sum_{b} \sum_{(a',b') \ne (a_0, b_0)} \left( \abs*{W_{\A\B}(b, (a', b'); i) -W(b, (a', b'); i)} + \abs{\A}/n \right) \\
			&\leq [50r\delta + 36 \abs{\A}^2\abs{\B}^2 r / n] + \abs{\A}^2 \abs{\B}^2 r / n \\
			&= 50r\delta + 37 \abs{\A}^2\abs{\B}^2 r / n
		\end{align*}
			
		\item[(\ref{eqn:fullmarg3}) terms:] Applying the triangle inequality to terms defined in equation (\ref{eqn:fullmarg3}):
		\begin{align*}
			\lhs \abs*{W_{\A\B}((a, b), (a_0, b_0); i) - W_{\A\B}((a, b), (a_0, b_0); i)} \\
			&\leq \abs*{W_{\A\B}((a, b)) - W((a, b))} + \sum_{(a',b') \ne (a_0,b_0)} \abs*{W_{\A\B}((a, b), (a', b'); i) - W((a, b), (a', b'); i)}.
		\end{align*}
		Therefore the total contribution of all such terms is
		\begin{align*}
			\lhs \sum_{i \in [r]} \sum_{a,b} \abs*{W_{\A\B}((a, b), (a_0, b_0); i) - W_{\A\B}((a, b), (a_0, b_0); i)} \\
			&= \sum_{i \in [r]} \sum_{a,b} \Bigg[ \abs*{W_{\A\B}((a,b)) - W((a,b))} \\
			&\qquad + \sum_{(a',b') \ne (a_0,b_0)} \abs*{W_{\A\B}((a, b), (a', b'); i) - W((a, b), (a', b'); i)} \Bigg] \\
			&= \overbrace{\sum_{i \in [r]} \sum_{a,b} \abs*{W_{\A\B}((a,b)) - W((a,b))}}^{\text{vertex measure}} \\
			&\qquad + \overbrace{\sum_{i \in [r]} \sum_{a \ne a_0}\sum_{b}\sum_{(a',b') \ne (a_0,b_0)} \abs*{W_{\A\B}((a, b), (a', b'); i) - W((a, b), (a', b'); i)}}^{\text{\eqref{eqn:fullmarg1} terms}} \\
			&\qquad + \overbrace{\sum_{i \in [r]} \sum_{b}\sum_{(a',b') \ne (a_0,b_0)} \abs*{W_{\A\B}((a_0, b), (a', b'); i) - W((a_0, b), (a', b'); i)}}^{\text{\eqref{eqn:fullmarg2} terms}} \Bigg] \\
			&\leq r \cdot \left[6 \delta + 3 \abs{\A} \abs{\B}/n \right] + \left[ \abs{\A}^2\abs{\B}^2 r / n \right] + \left[ 50 r\delta + 37 \abs{\A}^2\abs{\B}^2 r / n \right] \\
			&\leq 56 r\delta + 41 \abs{\A}^2\abs{\B}^2r/n.
		\end{align*}
	\end{enumerate}
	
	Summing up the contributions from terms of all three types, we get that
		\[ d_1(W_{\A\B}, W) \leq 106 r\delta + 79 \abs{\A}^2\abs{\B}^2r/n . \]
	
\subsubsection{Nonnegativity}
	We can modify a solution with negative entries to get a nonnegative one similarly to above. Let $-R/n$ be the sum of all negative entries; then
		\[ R/n \leq 106 r\delta + 78 \abs{\A}^2\abs{\B}^2r/n . \]
	
	Suppose there is some entry
		\[ W_{\A\B}((a_-, b_-), (a'_-, b'_-); i) \leq -1/n . \]
	We want to increment this term by $1/n$ without affecting the vertex measure or the $\B$ marginal.
	Since
		\[ \sum_{(a',b') \in \A \times \B} W_{\A\B}((a_-, b_-), (a', b'); i) = W_{\A\B}((a_-, b_-)) \geq 0 \]
	there exists some $(a'_+, b'_+) \in \A \times \B$ such that $W_{\A\B}((a_-, b_-), (a'_+, b'_+); i) \geq 1/n$; similarly since
		\[ \sum_{a \in A} W_{\A\B}((a, b_-), (a', b'_-); i) = W_{\A\B}(b_-, (a'_-,b'_-); i) \geq 0 \]
	there exists some $a_+$ such that $W_{\A\B}((a_+, b_-), (a'_-, b'_-); i) \geq 1/n$.
	Increase 
		\[ W_{\A\B}((a_-, b_-), (a'_-, b'_-); i) \quad \text{and} \quad W_{\A\B}((a_+, b_-), (a'_+, b'_+); i) \]
	by $1/n$, and decrease
		\[ W_{\A\B}((a_-, b_-), (a'_+, b'_+); i) \quad \text{and} \quad W_{\A\B}((a_+, b_-), (a'_-, b'_-); i) \]
	by $1/n$. This moves the weight by $\ell^1$ distance $4/n$.
	
	Since $R$ is the maximum number of times we need to do this before there are no more negative entries, the final weight satisfies
	 	\[ d_1(W_{\A\B}, W) \leq 106 r\delta + 79 \abs{\A}^2\abs{\B}^2r/n + 4R/n \leq 530 r\delta + 391 \abs{\A}^2\abs{\B}^2r/n . \]
	To simplify, we write
		\[ d_1(W_{\A\B}, W) \leq 530r ( \delta + \abs{\A \times \B}^2/n) , \]
	or
		\[  d(W_{\A\B}, W) \leq 265 r( \delta + \abs{\A \times \B}^2/n). \]

\bibliographystyle{alpha}
\bibliography{references}

\end{document}